\newcommand\textcyr[1]{{\fontencoding{OT2}\fontfamily{wncyr}\selectfont #1}}
\newcommand{\defn}[1]{\textcolor{Maroon}{\emph{#1}}}
\crefname{lem}{Lemma}{Lemmas}
\crefname{thm}{Theorem}{Theorems}
\crefname{cor}{Corollary}{Corollaries}
\crefname{prop}{Proposition}{Propositions}
\crefname{conj}{Conjecture}{Conjectures}
\renewcommand{\le}{\leqslant}
\renewcommand{\leq}{\leqslant}
\renewcommand{\ge}{\geqslant}
\renewcommand{\geq}{\geqslant}
\newtheorem{thm}{Theorem}
\newtheorem{conjecture}[thm]{Conjecture}
\newtheorem{lem}[thm]{Lemma}
\newtheorem{cor}[thm]{Corollary}
\newtheorem{prop}[thm]{Proposition}
\newcommand{\mc}[1]{\mathcal{#1}}
\newcommand{\bb}[1]{\mathbb{#1}}
\newcommand{\brm}[1]{\operatorname{#1}}
\newcommand{\fs}[2]{\left(\frac{#1}{#2}\right)}
\newcommand{\s}[1]{\left(#1\right)}
\newcommand{\had}[2]{h_{#1}(#2)}
\newcommand{\Jfour}{J_{\Diamond}}
\newcommand{\Jsix}{J_{\protect\resizebox{2mm}{!}{\hexagon}}}
\DeclareMathOperator{\sn}{sn}
\DeclareMathOperator{\dsn}{dsn}
\DeclareMathOperator{\qn}{qn}
\DeclareMathOperator{\congestion}{cong}
\DeclareMathOperator{\Tr}{Tr}
\DeclareMathOperator{\overlap}{overlap}
\title[Three-dimensional graph products with unbounded stack-number]{Three-dimensional graph products\\ with unbounded stack-number}
\author[Eppstein]{David Eppstein}
\address{Department of Computer Science, University of California, Irvine, California, USA}
\email{eppstein@uci.edu}
\author[Hickingbotham]{Robert Hickingbotham}
\address{School of Mathematics, Monash University, Melbourne, Australia}
\email{robert.hickingbotham@monash.edu}
\author[Merker]{Laura Merker}
\address{Institute of Theoretical Informatics, Karlsruhe Institute of Technology, Germany}
\email{laura.merker2@kit.edu}
\author[Norin]{Sergey Norin}
\address{Department of Mathematics and Statistics, McGill University, Montreal, QC, Canada.}
\email{snorin@math.mcgill.ca}
\author[Seweryn]{Micha\l{} T. Seweryn}
\address{Department of Theoretical Computer Science, Jagiellonian University, Kraków, Poland}
\email{michal.seweryn@tcs.uj.edu.pl}
\author[Wood]{David R. Wood}
\address{School of Mathematics, Monash University, Melbourne, Australia}
\email{david.wood@monash.edu}
\thanks{Norin is supported by an NSERC Discovery Grant. Hickingbotham is supported by an Australian Government Research Training Program Scholarship. Wood is supported by the Australian Research Council.}
\begin{document}

\begin{abstract}
	We prove that the stack-number of the strong product of three $n$-vertex paths is $\Theta(n^{1/3})$. The best previously known upper bound was $O(n)$. No non-trivial lower bound was known. This is the first explicit example of a graph family with bounded maximum degree and unbounded stack-number. 
	
	The main tool used in our proof of the lower bound is the topological overlap theorem of Gromov. We actually prove a stronger result in terms of so-called triangulations of Cartesian products. We conclude that triangulations of three-dimensional Cartesian products of any sufficiently large connected graphs have large stack-number.
	
	The upper bound is a special case of a more general construction based on families of permutations derived from Hadamard matrices. 
	
	The strong product of three paths is also the first example of a bounded degree graph with bounded queue-number and unbounded stack-number. A natural question that follows from our result is to determine the smallest $\Delta_0$ such that there exist a graph family with unbounded stack-number, bounded queue-number and maximum degree $\Delta_0$. We show that $\Delta_0\in \{6,7\}$.
\end{abstract}

\subjclass[2010]{05C10, 05C62}

\keywords{stack layout, stack-number, strong product, Topological Overlap Theorem}

\maketitle

\vspace*{-3ex}

\section{Introduction}

Stack layouts are ubiquitous objects at the intersection of combinatorics, geometry and topology with applications in computational complexity~\cite{GKS89,DSW16,Bourgain09,BY13}, 
RNA folding~\cite{HS99}, 
graph drawing in two~\cite{BB04,SSSV-JGT96} %,ADFPR12
and three dimensions~\cite{Wood-GD01}, traffic light scheduling~\cite{Kainen90}, and fault-tolerant multiprocessing~\cite{CLR87,Rosenberg83a}. 

For a graph\footnote{All graphs in this paper are simple and, unless explicitly stated otherwise, undirected and finite. Let $\mathbb{N}=\{1,2,\dots\}$ and $\mathbb{N}_0=\mathbb{N}\cup\{0\}$.} $G$ and $s\in\mathbb{N}_0$, an \defn{$s$-stack layout} of $G$ consists of an ordering $(v_1,\ldots,v_n)$ of $V(G)$ together with a function $\phi\colon E(G) \to \{1,\dots,s\}$ such that for all edges $v_iv_j,v_kv_\ell \in E(G)$ with $i < k < j < \ell$ we have $\phi(v_iv_j) \neq \phi(v_k v_\ell)$; see~\cref{StackLayout} for an example. Each set $\phi^{-1}(k)$ is called a \defn{stack}. Edges in a stack do not cross with respect to $(v_1,\ldots,v_n)$, and therefore behave in a LIFO manner (hence the name). Stack layouts can also be viewed topologically via embeddings into books (first defined by Ollmann~\cite{Ollmann73}). The \defn{stack-number $\sn(G)$} of a graph $G$ is the minimum $s\in\mathbb{N}_0$ for which there exists an $s$-stack layout of $G$ (also known as \defn{page-number}, \defn{book-thickness} or \defn{fixed outer-thickness}). 

\begin{figure}[!h]
\centering
\includegraphics{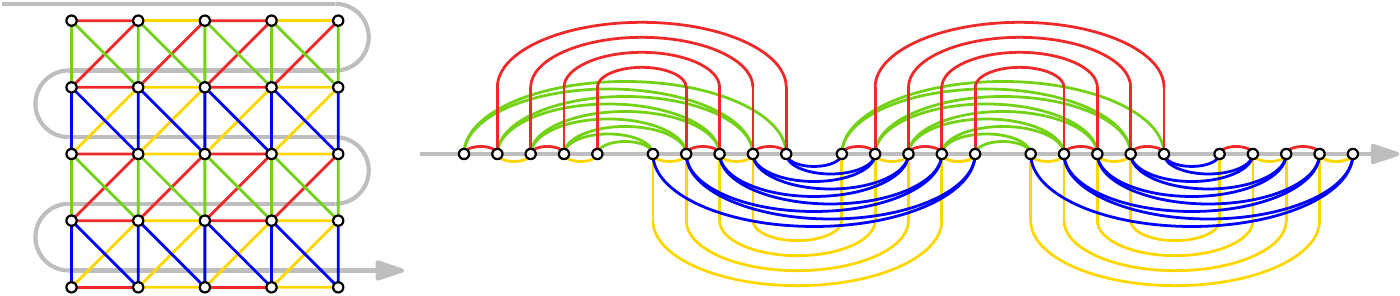}
\caption{A \(4\)-stack layout of the strong product \(P_5 \boxtimes P_5\) of two paths.}
\label{StackLayout}
\end{figure}

Stack layouts have been studied for 
planar graphs~\citep{BS84,BKKPRU20,Yannakakis89,Yann20,Heath-FOCS84}, 
graphs of given genus~\citep{Malitz94b,HI92,Endo97}, 
treewidth~\citep{GH01,DW07,VWY09,DujWoo11}, 
minor-closed graph classes~\citep{Blankenship-PhD03}, 
1-planar graphs~\citep{BBKR17,B20,BLGGMR20,ABK15}, and 
graphs with a given number of edges~\citep{Malitz94a}, 
amongst other examples. 

%bandwidth~\citep{SGB95}, pathwidth~\citep{TY02}; 
%Rosenberg86a,Rosenberg86,DujWoo04

This paper studies stack layouts of 3-dimensional products. As illustrated in \cref{Grids}, for graphs $ G_1 $ and $ G_2 $, the \defn{Cartesian product $G_1\boxempty G_2$} is the graph with vertex-set $ V(G_1) \times V(G_2) $ with an edge between two vertices $ (x,y) $ and $ (x',y') $ if $x=x'$ and $ yy' \in E(G_2) $, or $y=y'$ and $xx'\in E(G_1) $. The \defn{strong product \(G_1 \boxtimes G_2\)} is the graph obtained from $G_1\boxempty G_2$ by adding edges $(x,y)(x',y')$ and $(x,y')(x',y)$ for all edges $xx' \in E(G_1)$ and $yy' \in E(G_2)$. Since the Cartesian and strong products are associative, we may write $G_1\boxempty G_2\boxempty G_3$ and $G_1\boxtimes G_2\boxtimes G_3$ (identifying pairs of the forms \(((v_1, v_2), v_3)\) and \((v_1, (v_2, v_3))\) with the triple \((v_1, v_2, v_3)\)).	
\begin{figure}[b]
	\centering
	\includegraphics{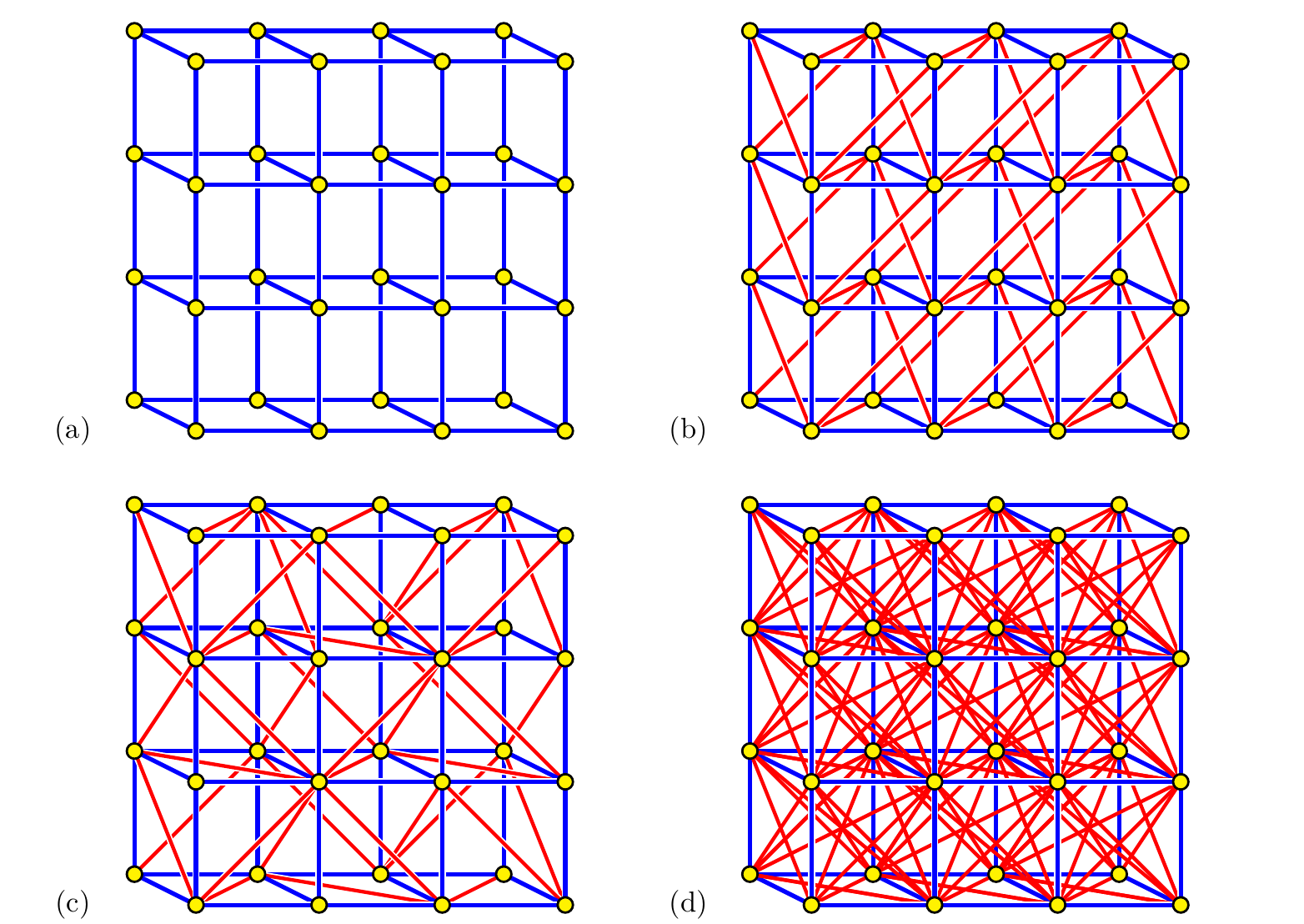}
	\caption{(a) $P_4\boxempty P_4\boxempty P_2$, (b) $P_4\boxslash P_4\boxslash P_2$, (c) a triangulation of $P_4\boxempty P_4\boxempty P_2$, (d) $P_4\boxtimes P_4\boxtimes P_2$.}
	\label{Grids}
\end{figure}

Let \defn{$P_n$} denote the $n$-vertex path. Our first main result is the following tight bound on the stack-number of the strong product of three paths (the 3-dimensional grid plus crosses).

\begin{thm}
\label{t:StrongProductPaths}
$\sn(P_n \boxtimes P_n \boxtimes P_n) \in \Theta(n^{1/3})$. 
\end{thm}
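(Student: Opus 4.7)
I plan to prove the upper and lower bounds by very different techniques.

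\textbf{Upper bound: $\sn(P_n\boxtimes P_n\boxtimes P_n) = O(n^{1/3})$.} I would construct an explicit stack layout using a family of $\Theta(n^{1/3})$ permutations of $[n]$ derived from the rows of a Hadamard-style matrix of size $\Theta(n^{1/3})$, as foreshadowed in the abstract. The vertex ordering of $[n]^3$ is obtained by interleaving the three coordinates through these permutations, and each edge of the strong product (which is always local, connecting coordinates differing by at most $1$ in every direction) is then assigned to one of $O(n^{1/3})$ stacks according to the Hadamard structure. The non-crossing property inside each stack reduces to an orthogonality-style combinatorial check on pairs of Hadamard rows.

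\textbf{Lower bound: $\sn(P_n\boxtimes P_n\boxtimes P_n) = \Omega(n^{1/3})$.} The first step is to observe that $P_n\boxtimes P_n\boxtimes P_n$ contains, as a spanning subgraph, the 1-skeleton of a natural triangulation $T$ of the Cartesian grid $P_n\boxempty P_n\boxempty P_n$: each axis-aligned unit cube can be split into six tetrahedra whose edges (axis edges, face diagonals, and the space diagonal) are all present in the strong product. Since stack-number is monotone under spanning subgraphs, it suffices to lower-bound the stack-number of the 1-skeleton of $T$, which has $\Theta(n^3)$ tetrahedra and $\Theta(n^3)$ triangular 2-faces. This would also establish the more general theorem about triangulations of three-dimensional Cartesian products advertised in the abstract.

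Given an $s$-stack layout of the 1-skeleton, I would extend it to a continuous map $\psi\colon |T^{(2)}|\to B_s$ from the geometric realization of the 2-skeleton into the $s$-page book $B_s$, filling each triangular 2-face into one of the pages hosting its edges by a canonical rule. Composing with the folding $B_s\to\mathbb{R}^2$ that identifies all pages onto a single half-plane---an at most $s$-to-one map on interiors---produces a continuous map from the 2-complex $T^{(2)}$ to the plane. Gromov's topological overlap theorem, applied to $T^{(2)}$ (which inherits sufficient coboundary expansion from being a triangulation of a 3-dimensional region), then guarantees some point $p\in\mathbb{R}^2$ whose preimage meets $\Omega(n^3)$ of the 2-faces. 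Pulling this back through the folding by pigeonhole yields a point of $B_s$ in the filled image of at least $\Omega(n^3/s)$ 2-faces.

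The main obstacle, and the crux of the argument, is to prove a matching upper bound on the point-multiplicity of $\psi$. Within each page the filled 2-faces are topological disks bounded by non-crossing arcs, so a careful accounting---distinguishing 2-faces whose three edges all lie in a single page from those spanning multiple pages, and exploiting the bounded degree and outerplanar structure of each page---should yield an upper bound of roughly $O(n^2 s^2)$ on the number of 2-faces covering any single point of $B_s$. Combined with the $\Omega(n^3/s)$ lower bound from Gromov, this forces $s^3\geq \Omega(n)$, giving $s=\Omega(n^{1/3})$ as required. I expect this combinatorial overlap bound, linking the geometry of the book to the topology of the triangulation, to be the technically demanding step of the proof.
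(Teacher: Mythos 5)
Your upper bound plan matches the paper's: both use families of permutations derived from Hadamard matrices, so that no two permutations have a long common (or reverse-common) subsequence, and this is what yields the $O(n^{1/3})$ stack layout. That part is fine.

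The lower bound plan has a critical gap. You propose to apply Gromov's topological overlap theorem directly to the $2$-skeleton $T^{(2)}$ of the grid triangulation, asserting that it ``inherits sufficient coboundary expansion from being a triangulation of a $3$-dimensional region'' and hence some point of $\mathbb{R}^2$ is covered by $\Omega(n^3)$ of its $2$-faces. This is false. \cref{t:Gromov} is a statement about the \emph{complete} complex $\Tr(K_m)$, not about arbitrary $2$-complexes, and a bounded-degree triangulation of $[1,n]^3$ is very far from being a $2$-dimensional expander: a generic linear projection $\mathbb{R}^3\to\mathbb{R}^2$ restricts to a continuous map on $\Tr(P_n\boxtimes P_n\boxtimes P_n)$ whose multiplicity is only $n+O(1)$, as noted after \cref{l:PPPOverlap2}. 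So the true overlap with $\mathbb{R}^2$ is $\Theta(n)$, nowhere near $\Omega(n^3)$. The substantive work your plan skips is exactly \cref{l:factor}, \cref{l:bramble} and the bramble construction inside \cref{l:PPPOverlap2}: the paper transfers Gromov's $\Omega(m^3)$ bound for $\Tr(K_m)$ to the grid complex by building a continuous map $\Tr(K_{n^3})\to \Tr(G)$ of controlled $\triangle$-multiplicity out of a bramble of axis-slabs, and this only yields $\overlap(\Tr(G),\mathbb{R}^2)=\Omega(n)$, not $\Omega(n^3)$.

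Your matching multiplicity estimate is also off. You claim at most $O(n^2 s^2)$ triangles of $T^{(2)}$ can cover a common point in the folded book; the correct bound, via \cref{CrossingsDisjointTriangles} together with the fact that each vertex of the grid complex lies in only $O(1)$ triangles, is $O(s^3)$, independent of $n$. Your two mistaken quantities ($\Omega(n^3/s)$ and $O(n^2 s^2)$) happen to cancel to the right inequality $s\gtrsim n^{1/3}$, but neither is correct, and neither has a proof in your outline. Finally, the detour through the $s$-page book $B_s$ with a ``filling'' rule is awkward because a triangle's three edges can lie on three different pages, so ``one of the pages hosting its edges'' is not canonical; the paper avoids this entirely by placing the vertices on a circle in the layout order and extending affinely on each $2$-simplex, which is what makes the disjoint-triangles crossing lemma applicable.
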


Note that \((P_n \boxtimes P_n) \boxempty P_n\) and \((P_n \boxempty P_n) \boxtimes P_n\) both have bounded stack-number, as we now explain. \citet{BK79} showed that if \(G_1\) and \(G_2\) are graphs with bounded stack-number and \(G_1\) is bipartite with bounded maximum degree, then the stack-number of \(G_1 \boxempty G_2\) is bounded. \citet{Pupyrev20a} showed that if additionally \(G_2\) has bounded pathwidth, then the stack-number of \(G_1 \boxtimes G_2\) is also bounded. These results imply that \((P_n \boxtimes P_n) \boxempty P_n\) and \((P_n \boxempty P_n) \boxtimes P_n\) indeed have bounded stack-number. This shows that in \cref{t:StrongProductPaths}, we cannot replace even one `strong product' by a `Cartesian product'.

No non-trivial lower bound on $\sn(P_n\boxtimes P_n\boxtimes P_n)$ was previously known. Indeed, \cref{t:StrongProductPaths} provides the first explicit example of a graph family with bounded maximum degree and unbounded stack-number. \citet{Malitz94a} first proved that graphs of maximum degree $3$ have unbounded stack-number (using a probabilistic argument). Further motivation for \cref{t:StrongProductPaths} is provided in \cref{QueueConnections} where we present various connections to related graph parameters, shallow/small minors and growth. 

We now discuss the lower bound in \cref{t:StrongProductPaths}. We actually prove a stronger result that depends on the following definitions. For graphs $G_1$ and $G_2$, a \defn{triangulation} of $G_1\boxempty G_2$ is any graph obtained from $G_1\boxempty G_2$ by adding the edge $(x,y)(x',y')$ or $(x,y')(x',y)$ for each $xx'\in E(G_1)$ and $yy'\in E(G_2)$. 
 A \defn{triangulation} of $G_1 \boxempty G_2 \boxempty G_3$ is any graph obtained by triangulating all subgraphs induced by sets of the form $\{v_1\} \times V(G_2) \times V(G_3)$, $V(G_1) \times \{v_2\} \times V(G_3)$ and $V(G_1)\times V(G_2) \times \{v_3\}$ with $v_i \in V(G_i)$; see \cref{Grids}(c) for an example.
 
 For directed graphs $G_1$ and $G_2$, 
 if $U_i$ is the undirected graph underlying $G_i$, then 
 let \defn{$G_1\boxslash G_2$} be the triangulation of $U_1 \boxempty U_2$ containing the edge $(x,y)(x',y')$ for all directed edges $(x,x')\in E(G_1)$ and $(y,y')\in E(G_2)$.
 Similarly, for directed graphs \(G_1\), \(G_2\) and \(G_3\), let \defn{$G_1\boxslash G_2\boxslash G_3$} be the appropriate triangulation of \(U_1\boxempty U_2\boxempty U_3\). When using the notation \(G_1 \boxslash G_2\) or \(G_1 \boxslash G_2 \boxslash G_3\), if some $G_i$ is a path \(P_n\), then \(P_n\) is a directed path. 

Every triangulation of $G_1\boxempty G_2$ is a subgraph of $G_1\boxtimes G_2$ and every triangulation of $G_1\boxempty G_2 \boxempty G_3$ is a subgraph of $G_1\boxtimes G_2 \boxtimes G_3$. So the next result immediately implies the lower bound in \cref{t:StrongProductPaths}.

\begin{thm}
\label{t:TreeTriangulation}
Let $T_1$, $T_2$ and $T_3$ be $n$-vertex trees with maximum degree $\Delta_1$, $\Delta_2$ and $\Delta_3$ respectively, where $\Delta_1\geq\Delta_2\geq\Delta_3$. Then for every triangulation $G$ of $T_1\boxempty T_2\boxempty T_3$, 
$$\sn(G) \in \Omega\left(\left(
	\frac{n}{\Delta_1\Delta_2}
	\right)^{1/3}
	\right).$$ 
\end{thm}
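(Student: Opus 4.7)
The plan is to apply Gromov's topological overlap theorem to a $2$-dimensional simplicial complex $X$ read off from the triangulation $G$. The $0$- and $1$-cells of $X$ are the vertices and edges of $G$, and the $2$-cells are the triangles created when each of the $4$-cycles of $T_1\boxempty T_2\boxempty T_3$ inside a fixed-coordinate $2$-dimensional subgrid is split by the triangulation. Each such triangulated subgrid contributes a disk-like piece of $X$, and the pieces are glued together along the tree-shaped $1$-dimensional seams they share.

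Before invoking Gromov, I would pass to a large subcomplex $X'$ on which coboundary expansion can be controlled. The guiding intuition is that the triangulation of a product of three long paths is a good $2$-dimensional coboundary expander, so I would extract from $T_1$ and $T_2$ substructures of order roughly $n/\Delta_1$ and $n/\Delta_2$ (for instance via repeated splitting at a centroid) whose product together with $T_3$ still realises $\Omega(n^{3}/(\Delta_1\Delta_2))$ triangles of the triangulation. The main technical step---and the principal obstacle I anticipate---is proving that this $X'$ satisfies a coboundary expansion inequality with an absolute positive constant, because individual trees have no expansion at all and the required expansion must come purely from the product structure.

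With expansion in hand, from any $s$-stack layout of $G$ I would build a continuous map $f\colon X'\to \mathbb{R}^2$ as follows: place the vertices of $G$ on a horizontal line in the layout order, draw each edge as an arc in one of $s$ half-plane sectors attached to that line (one sector per stack), and extend to the $2$-cells by filling each triangle of $X'$ with a continuous disk bounded by the arcs drawn for its three edges. Gromov's topological overlap theorem then yields a point $p\in\mathbb{R}^2$ that lies in the image of at least a constant fraction of the $2$-cells of $X'$, that is, in the images of $\Omega(n^3/(\Delta_1\Delta_2))$ triangles.

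The complementary step is to upper-bound, in terms of $s$, the number of triangles whose filled images under such an $f$ can share a common point. Each filled triangle is encoded by the interval its three vertices span on the axis together with the triple of stacks carrying its edges, and a case analysis of how two such lens regions can overlap bounds the local multiplicity of $f$ by a polynomial $\mathrm{poly}(s)$ of fixed degree. Combining this with the Gromov lower bound gives $\mathrm{poly}(s)\gtrsim n^3/(\Delta_1\Delta_2)$, which, for the appropriate exponent, rearranges to $\sn(G)=\Omega((n/(\Delta_1\Delta_2))^{1/3})$, as required.
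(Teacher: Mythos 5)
Your instinct to use Gromov's overlap theorem and the map-from-a-stack-layout trick is in the right family, but there is a fatal gap at the step you yourself flag as the main obstacle, and it cannot be repaired along the lines you propose. The complex $\Tr(G)$ is \emph{not} a $2$-dimensional coboundary/topological expander in the sense needed to conclude that a constant fraction of the $2$-cells of $X'$ pass through a common point. Already for $G = P_n \boxtimes P_n \boxtimes P_n$ the natural affine map $\Tr(G) \to \mathbb{R}^2$ (project along a generic near-axis direction) covers every point at most $n + O(1)$ times, while $|X^{=2}| = \Theta(n^3)$; so $\overlap(\Tr(G),\mathbb{R}^2) = O(n)$, a $O(1/n^2)$ fraction of the $2$-cells, not a constant fraction. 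Any attempt to prove constant topological overlap for $X'$ is therefore doomed. The paper avoids this: it applies Gromov's theorem only to $\Tr(K_n)$ (which is a topological expander), then transfers the conclusion to $\Tr(G)$ via a continuous map $\Tr(K_n)\to\Tr(G)$ built from a \emph{bramble} of $n^3$ simply connected subcomplexes $X(u_1)\cup X(u_2)\cup X(u_3)$ (the ``slice unions''), which has congestion $\leq 3n^2$. This gives the weaker — but correct and sufficient — bound $\overlap(\Tr(G),\mathbb{R}^2) = \Omega(n)$ (\cref{l:PPPOverlap2} via \cref{l:factor,l:bramble}).

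Your second step also needs repair. The number of triangles whose images under the stack-layout drawing share a common point cannot be bounded by $\mathrm{poly}(s)$ alone: if many triangles share a vertex, all their images pass through that vertex's image. The paper's bound is $\leq 3c\,s^3$, where $c$ is the maximum number of triangles through a vertex, obtained by first passing to pairwise vertex-disjoint triangles (losing a factor $3c$) and then showing that $s$ stacks admit at most $s^3$ pairwise vertex-disjoint pairwise intersecting triangles (\cref{CrossingsDisjointTriangles}). Combined with $\overlap = \Omega(n)$ and $c = O(\Delta_1\Delta_2)$ this gives $3cs^3 \geq \Omega(n)$, hence $s = \Omega\bigl((n/(\Delta_1\Delta_2))^{1/3}\bigr)$; note this arithmetic only works because the overlap bound is $\Omega(n)$ rather than the $\Omega(n^3/(\Delta_1\Delta_2))$ your expansion claim would give — with that input the exponents in your final displayed rearrangement do not come out to the stated bound.
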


\cref{t:TreeTriangulation} is similar in spirit to a recent result of 
\citet{DEHMW21}, who showed that if \defn{$S_n$} is the $n$-leaf star, then \[\sn(S_n\boxempty(P_n\boxslash P_n))\in\Omega(\sqrt{\log\log n}).\]
Their proof actually establishes the following result (since the Hex Lemma holds for any triangulation of $P_n\boxempty P_n$; see \citep{Wood18} for example). 

\begin{thm}[\citep{DEHMW21}] 
	\label{StarPathPath}
	For every triangulation $G$ of $P_n\boxempty P_n$,
	\[\sn(S_n\boxempty G)\in\Omega(\sqrt{\log\log n}).\]
\end{thm}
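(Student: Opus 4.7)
The plan is to re-run the proof of \cite{DEHMW21} for the special case $G=P_n\boxslash P_n$, substituting an arbitrary triangulation $G$ of $P_n\boxempty P_n$ throughout. The excerpt's parenthetical identifies exactly the needed extension: the only structural input from $P_n\boxslash P_n$ used in \cite{DEHMW21} is the Hex Lemma, and this Hex Lemma holds for every triangulation of $P_n\boxempty P_n$ (see \cite{Wood18}). Concretely, for any triangulation $G$ of $P_n\boxempty P_n$ and any $2$-colouring of $V(G)$, either the first colour class contains a path from the left side of the underlying grid to the right side, or the second colour class contains a path from the top side to the bottom side. This is because triangulating $P_n\boxempty P_n$ only adds one diagonal inside each $1\times 1$ face while preserving the four boundary paths, and the topological argument for the Hex Lemma is insensitive to which diagonal is chosen in each face.

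With the triangulation Hex Lemma in hand, I would proceed as in \cite{DEHMW21}. First, fix an $s$-stack layout of $S_n\boxempty G$ with stack function $\phi$. Let $c$ be the centre of $S_n$ and $\ell_1,\ldots,\ell_n$ its leaves. For each leaf $\ell_i$, the spoke edges $\{(c,v)(\ell_i,v):v\in V(G)\}$ form a perfect matching, and $\phi$ restricted to these edges induces a colouring $\phi_i\colon V(G)\to\{1,\ldots,s\}$. Second, apply the triangulation Hex Lemma to suitable $2$-partitions of each $\phi_i$ to extract long monochromatic spoke-structures across $G$. Third, combine the non-crossing constraints of the stack layout with pigeonhole over the $n$ leaves to iteratively refine the configuration. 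Carrying out this refinement through $O(s^2)$ rounds forces $n\leq 2^{2^{O(s^2)}}$, equivalently $\sn(S_n\boxempty G)\in\Omega(\sqrt{\log\log n})$.

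The main obstacle — essentially the only place requiring real work — is to verify line by line that the argument of \cite{DEHMW21} appeals to no structural feature of $P_n\boxslash P_n$ beyond the Hex Lemma. The excerpt's parenthetical signals that this verification is routine, and a mild subtlety is to identify the correct "four sides" for invoking the Hex Lemma in a general triangulation: the natural and correct choice is the four boundary paths of the underlying $P_n\boxempty P_n$, which remain subgraphs of every triangulation $G$. Once the Hex Lemma is isolated as the sole structural ingredient, \cref{StarPathPath} follows immediately from the \cite{DEHMW21} proof with $P_n\boxslash P_n$ replaced by $G$.
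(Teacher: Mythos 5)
Your proposal matches the paper exactly: the paper does not re-prove this theorem but simply observes (in the sentence preceding the statement) that the proof in \cite{DEHMW21} carries over verbatim because the only structural property of $P_n\boxslash P_n$ it uses is the Hex Lemma, which holds for every triangulation of $P_n\boxempty P_n$. Your identification of the Hex Lemma as the sole structural ingredient and your sketch of the \cite{DEHMW21} argument are consistent with how the paper presents this result.
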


\cref{t:TreeTriangulation} has the advantage over \cref{StarPathPath} in that it applies to bounded degree graphs (for example when each $T_i$ is a path). Moreover, the lower bound in \cref{t:StrongProductPaths} (as a function of the number of vertices) is much stronger than the lower bound of \cref{StarPathPath}.

We prove \cref{t:TreeTriangulation} by relating the stack-number of a graph to the topological properties of its triangle complex.
The \defn{triangle complex} of a graph $G$, denoted by $\Tr(G)$, is the geometric $2$-dimensional simplicial complex with $0$-faces corresponding to vertices of $G$, $1$-faces corresponding to edges of $G$, and $2$-faces corresponding to triangles of $G$. 
For topological spaces $X$ and $Y$, define 
$$\overlap(X,Y) = \min_{f \in C(X,Y)} \max_{p \in Y} | f^{-1}(p)|,$$ 
where \defn{$C(X,Y)$} denotes the space of all continuous functions $f: X \to Y$. 
In \cref{s:lem1,s:lem2}, respectively, we prove the following two lemmas.

\begin{lem}\label{l:PPPOverlap2} 
For all $n$-vertex trees $T_1$, $T_2$ and $T_3$, and for every triangulation $G$ of $T_1 \boxempty T_2 \boxempty T_3$, 
	$$\overlap(\Tr(G), \bb{R}^2) \in \Omega (n).$$
\end{lem}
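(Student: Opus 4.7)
The plan is to apply Gromov's topological overlap theorem to the $2$-dimensional simplicial complex $\Tr(G)$. In the form we use it, the theorem asserts that there exists an absolute constant $c > 0$ such that any pure $d$-dimensional simplicial complex $X$ with $\mathbb{Z}/2$-coboundary expansion at least $\eps$ in every dimension less than $d$ satisfies $\overlap(X, \bb R^d) \ge c \, \eps^d \, |F_d(X)|$, where $F_d(X)$ is the set of $d$-faces. Specialising to $d = 2$, it will suffice to prove the two bounds $|F_2(\Tr(G))| = \Theta(n^3)$ and $\eps = \Omega(1/n)$: these combine to give $\overlap(\Tr(G), \bb R^2) \geq c \cdot (1/n)^2 \cdot n^3 = \Omega(n)$.

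The triangle count is immediate. Each $2$-dimensional ``rectangular face'' of the Cartesian product $T_1 \boxempty T_2 \boxempty T_3$---parameterised by a vertex $v_i \in V(T_i)$ together with two edges $e_j \in E(T_j)$, $e_k \in E(T_k)$ with $\{i,j,k\} = \{1,2,3\}$---contributes exactly $2$ triangles to $\Tr(G)$ via the triangulation. Summing over the three types of rectangular faces yields
\[|F_2(\Tr(G))| \;=\; 2\sum_{\{i,j,k\}=\{1,2,3\}} |V(T_i)|\cdot|E(T_j)|\cdot|E(T_k)| \;=\; \Theta(n^3).\]

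The heart of the argument is the coboundary expansion bound. The dimension-$0$ estimate is a Cheeger-type inequality for the $1$-skeleton of $\Tr(G)$; it is essentially immediate from the fact that $\Tr(G)$ has diameter $O(n)$, since any pair of vertices of $T_1 \boxempty T_2 \boxempty T_3$ is joined by a path of length at most $3(n-1)$ in the product. The dimension-$1$ estimate is the main technical step: it amounts to a filling inequality saying that every nontrivial $\mathbb Z/2$-valued $1$-cochain $c$ on $\Tr(G)$ satisfies $\|\delta c\| \ge \Omega(\|c\|_{\mathrm{sys}}/n)$. To prove this, I would exploit the $3$-dimensional product structure of $T_1 \boxempty T_2 \boxempty T_3$: the triangulated boundaries of the ``$3$-cubes'' indexed by triples of edges $(e_1, e_2, e_3) \in E(T_1) \times E(T_2) \times E(T_3)$ serve as filling quanta, and the tree hypothesis ensures that these fillings can be combined coherently across the whole complex.

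The main obstacle is this dimension-$1$ filling inequality. Coboundary expansion in positive dimension is considerably subtler than a graph-theoretic Cheeger constant: there is no general spectral machinery, and one must argue directly with chains and cochains. The bulk of the work will be to show that the $3$-dimensional structure of $T_1 \boxempty T_2 \boxempty T_3$---which allows any null-homologous $1$-cochain to be filled by a $2$-chain built from triangulated $2$-discs sitting inside the tree product---provides the required $\Omega(1/n)$ scaling uniformly over all $n$-vertex trees, independently of their maximum degrees.
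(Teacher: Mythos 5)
Your plan is a genuinely different route from the paper's, but it has gaps at its load-bearing step that I do not see how to close. The paper does \emph{not} apply Gromov's theorem to $\Tr(G)$ at all. It invokes Gromov only for $\Tr(K_N)$, where the relevant coboundary expansion is a known constant and one gets $\overlap_\triangle(\Tr(K_N),\bb{R}^2)\geq\alpha N^3$ as a black box, and then transfers this to $\Tr(G)$ by exhibiting a continuous map $\Tr(K_N)\to\Tr(G)$ of small $\overlap_\triangle$. That map is built from the bramble of $N=n^3$ subcomplexes $X(u_1)\cup X(u_2)\cup X(u_3)$ (unions of three coordinate slices), and a simple factoring lemma (\cref{l:factor}) converts the $K_N$-bound into a bound on $\overlap(\Tr(G),\bb{R}^2)$. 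The point of this detour is precisely to avoid any analytic estimate of the expansion of $\Tr(G)$ itself: the only facts used about the trees are that they are simply connected and that the slices intersect nicely, which holds uniformly for arbitrary $n$-vertex trees.

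Your proposal instead needs coboundary expansion $\Omega(1/n)$ for $\Tr(G)$, and this is where it breaks down. The dimension-$0$ claim is not justified: diameter $O(n)$ does not imply Cheeger constant $\Omega(1/n)$ (for balanced binary trees $T_i$ the product has diameter $O(\log n)$ but Cheeger constant $\Theta(1/n)$, so the diameter gives neither a correct argument nor even the right order; and since the paper notes $\overlap(\Tr(P_n\boxtimes P_n\boxtimes P_n),\bb{R}^2)\leq n+O(1)$, any bound $\eps\gg 1/n$ in your formula would be outright false). The dimension-$1$ bound is the entire substance of the proof and is left as ``the bulk of the work''; no known result gives a $\Omega(1/n)$ coboundary-expansion bound for the triangulated $2$-skeleton of a product of three arbitrary $n$-vertex trees, the lemma must hold uniformly even when the trees have $\Theta(n)$-degree hubs, and the sketched argument conflates chains with cochains ($3$-cube boundaries are $2$-cycles, hence pair trivially with coboundaries and do not control $\operatorname{dist}(c,B^1)$). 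Finally, even granting both expansion bounds, the form of Gromov you invoke bounds $\overlap_\triangle$ (faces covering a point), not $\overlap$ (preimages), which is what \cref{l:PPPOverlap2} asserts; these differ for arbitrary continuous maps, and the paper bridges exactly this gap with \cref{l:factor} applied to the $\Tr(K_N)$ map. The claimed quantitative degradation $\overlap\geq c\,\eps^d\,|F_d|$ is also stronger than what the literature gives for small $\eps$. In short, you have the right black box but are applying it where the needed hypotheses are unverified and likely hard, whereas the paper's bramble argument is engineered to make those hypotheses unnecessary.
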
	

\begin{lem}\label{l:SNvsOverlap}
For every graph $G$ such that every vertex is in at most $c$ triangles,
$$\sn(G) \geq \s{\frac{\overlap(\Tr(G),\bb{R}^2)}{3c}}^{1/3}.$$ 
\end{lem}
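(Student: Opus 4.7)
The plan is to construct, from any $s$-stack layout of $G$ with $s = \sn(G)$, a continuous map $f\colon \Tr(G) \to \bb{R}^2$ whose maximum point-preimage has size at most $3cs^3$; the claimed inequality then follows directly from the definition of $\overlap$.

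First I would realise the stack layout as a book embedding of $G$ in $\bb{R}^3$: fix distinct angles $\theta_1, \dots, \theta_s \in (0, \pi)$, let the spine be the $x$-axis, place $v_i$ at $(i,0,0)$, and draw each edge $v_iv_j$ on page $k$ as a non-self-intersecting arc in the half-plane $H_k$ at angle $\theta_k$ around the spine; the stack property guarantees that arcs within a single $H_k$ can be chosen pairwise disjoint. Next I would extend this to a continuous map $h\colon \Tr(G) \to \bb{R}^3$ by filling each triangle $T = v_iv_jv_k$ (say with $i<j<k$) as a $2$-disk spanning its three bounding arcs, using for instance the union of three ``cones'' from the middle vertex $v_j$ to each of the three arcs, or piecewise-linear barycentric interpolation. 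Finally, composing with a suitable projection $\pi\colon \bb{R}^3 \to \bb{R}^2$ --- a generic linear projection, or the ``page-unfolding'' map $(x,y,z) \mapsto (x, \sqrt{y^2+z^2})$ --- gives the desired map $f = \pi \circ h$.

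The heart of the argument is the overlap bound. The factor $s^3$ should arise by stratifying the preimages of a point $p \in \bb{R}^2$ according to the ordered triple $(k_1, k_2, k_3) \in \{1,\dots,s\}^3$ of pages carrying the three edges of the triangle in question. The factor $3c$ should then come from a vertex-incidence count: after fixing a page-triple, each triangle contributing a preimage at $p$ can be indexed by one of its three vertices acting as a ``witness,'' and each vertex of $G$ lies in at most $c$ triangles.

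The main obstacle is making this classification rigid enough to yield the bound $3cs^3$: one needs to choose the triangle filling and the projection so that every triangle contributes at most one preimage to each $p$, and so that the page-triple/witness-vertex label of that preimage is well defined and stable under small perturbations. This is where all of the geometric care of the argument should live; it likely forces the filling to be a carefully coordinated piecewise-linear surface in the ambient book rather than an arbitrary spanning disk, together with a projection chosen to keep the fibres in general position with respect to this surface.
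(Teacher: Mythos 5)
Your proposal has the right counting skeleton (a factor of $s^3$ from page-triples of triangle edges, a factor of $3c$ from vertex-sharing among triangles) and reaches the correct bound, but it is not a complete proof, and the route you take introduces unnecessary difficulties that the paper sidesteps entirely.

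The paper's construction is much more direct: it places the vertices $v_1,\dots,v_n$ on a circle $S$ in $\mathbb{R}^2$ in the order of the stack layout and extends $f$ affinely over each $2$-simplex. There is no detour through a book embedding in $\mathbb{R}^3$ and no choice of projection. This immediately gives you, for free, two things you flag as ``obstacles'': each $2$-simplex maps injectively (affinely) to a triangle in the plane, so each triangle contributes at most one preimage to any point, and the page of each edge is globally well-defined (it is just $\phi$). Your proposed cone/disk fillings inside the book, followed by a projection, make both of these delicate; in particular the unfolding $(x,y,z)\mapsto(x,\sqrt{y^2+z^2})$ collapses all pages onto a single half-plane and creates spurious overlaps between arcs on different pages, so it does not work as stated, and a generic linear projection leaves you to verify injectivity on each filled disk separately.

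The genuine gap, though, is that you never prove the crucial claim: that among pairwise vertex-disjoint triangles through a common point $p$, no two can carry the same page-triple. You describe this as ``where all of the geometric care of the argument should live,'' but that care is the whole content. The paper isolates it as a self-contained combinatorial lemma (\cref{CrossingsDisjointTriangles}): if $T_1,\dots,T_m$ are pairwise vertex-disjoint, pairwise intersecting triangles with vertices on a circle and the edges partitioned into $k$ non-crossing classes, then $m\le k^3$. Its proof is a short case analysis showing that two vertex-disjoint intersecting triangles assigned the same triple of edge-classes would force a crossing within some class, and this analysis uses the cyclic order of the six vertices on the circle in an essential way. Your proposal needs an analogue of exactly this fact for your more complicated surface-in-$\mathbb{R}^3$ picture, and it is not clear that it would even be true there without further constraints on the fillings and the projection. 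The remaining reduction in the paper, greedily extracting $m>\tfrac{m'}{3c}$ vertex-disjoint triangles from $m'$ triangles through $p$ (using that each triangle shares a vertex with at most $3c-3$ others), is elementary and matches your ``$3c$ from a vertex witness'' intuition, so that part of your plan is sound.
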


\cref{t:TreeTriangulation} (and thus the lower bound in \cref{t:StrongProductPaths}) follows from \cref{l:PPPOverlap2,l:SNvsOverlap} since 
(using the notation from \cref{t:TreeTriangulation}) each vertex of $G$ is in at most $2(\Delta_1\Delta_2 + \Delta_1\Delta_3 + \Delta_2\Delta_3)\leq 6\Delta_1\Delta_2$ triangles.
So \cref{l:SNvsOverlap} is applicable with $c=6\Delta_1\Delta_2$. 

The lower bound on $\sn(G)$ in \cref{t:TreeTriangulation} is non-trivial only if $\Delta_1\Delta_2\in o(n)$. Nevertheless, we have the following result with no assumption on the maximum degree. \cref{t:TreeTriangulation,StarPathPath} imply that the stack-numbers of triangulations of \(P_n \boxempty P_n \boxempty P_n\) and \(S_n \boxempty P_n \boxempty P_n\) grow with \(n\). Moreover, \(S_n \boxempty S_n\) contains a subgraph isomorphic to a \(1\)-subdivision of \(K_{n, n}\), so its stack-number grows with \(n\) as well (see \citep{Eppstein01,BO99}). Since every sufficiently large connected graph contains a copy of \(P_n\) or \(S_n\), we deduce the following.

\begin{cor}\label{t:GraphTriangulation}
For every \(s\in\mathbb{N}\), there exists \(n\in\mathbb{N}\) such that for all $n$-vertex connected graphs $G_1$, $G_2$ and $G_3$, if $G$ is any triangulation of $G_1\boxempty G_2\boxempty G_3$, then $\sn(G) > s$.
\end{cor}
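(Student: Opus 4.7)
The proof combines three unboundedness results with a Ramsey-type observation. The observation --- standard, via BFS --- is that for every $m$ there exists $n_0(m)$ such that every connected graph on at least $n_0(m)$ vertices contains $P_m$ or $S_m$ as a subgraph: if the maximum degree is below $m$ (so there is no $S_m$), then a BFS from any vertex reaches at most $m^d$ vertices within distance $d$, forcing the diameter to be at least $m$ once the vertex count is large enough. Given $s$, I would pick $m$ so large that each of the following bounds exceeds $s$: \textbf{(a)} $\sn(G)\in\Omega(m^{1/3})$ for every triangulation $G$ of $P_m\boxempty P_m\boxempty P_m$, from \cref{t:TreeTriangulation} with $\Delta_i=2$; \textbf{(b)} $\sn(G)\to\infty$ for every triangulation $G$ of $S_m\boxempty P_m\boxempty P_m$, by the extension of \cref{StarPathPath} asserted in the paragraph preceding the corollary; and \textbf{(c)} the stack-number of the $1$-subdivision of $K_{m,m}$ tends to infinity with $m$, by \citep{Eppstein01,BO99}. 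Then set $n:=n_0(m)$.

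Given connected graphs $G_1,G_2,G_3$ each on at least $n$ vertices and any triangulation $G$ of $G_1\boxempty G_2\boxempty G_3$, choose subgraphs $H_i\subseteq G_i$ with $H_i\cong P_m$ or $H_i\cong S_m$. The induced subgraph of $G$ on $V(H_1)\times V(H_2)\times V(H_3)$ contains a triangulation of $H_1\boxempty H_2\boxempty H_3$ as a subgraph, because every axis-aligned $4$-cycle of $H_1\boxempty H_2\boxempty H_3$ is also a $4$-cycle of $G_1\boxempty G_2\boxempty G_3$ and has therefore been triangulated in $G$. Since stack-number is monotone under subgraphs, pigeonhole on the types of $H_i$ completes the proof: invoke \textbf{(a)} if all three $H_i$ are paths; invoke \textbf{(b)} if exactly one is a star; and if at least two are stars, restrict further to a two-dimensional slice $V(H_1)\times V(H_2)\times\{v_3\}$ to uncover $S_m\boxempty S_m$, which contains the $1$-subdivision of $K_{m,m}$ formed by the length-two paths $(v_i,c)(v_i,v_j)(c,v_j)$ noted in the excerpt, and then invoke \textbf{(c)}.

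The only point that requires more than a direct invocation is \textbf{(b)}, the extension of \cref{StarPathPath} from $\sn(S_m\boxempty G')$ to arbitrary triangulations of $S_m\boxempty P_m\boxempty P_m$: the induced subgraph need not be of the product form $S_m\boxempty G'$, since different path-path slices of the triangulation may choose different diagonals. This is the main (mild) obstacle, and I would resolve it by quoting the strengthening asserted in the paragraph preceding the corollary, which it attributes to the Hex-Lemma argument of \citep{DEHMW21}.
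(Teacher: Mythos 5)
Your proposal matches the paper's argument step for step: the same dichotomy that every sufficiently large connected graph contains $P_m$ or $S_m$, the same case analysis on how many of the $H_i$ are stars, and the same three ingredients --- \cref{t:TreeTriangulation} when all three are paths, \cref{StarPathPath} when exactly one is a star, and the $1$-subdivision of $K_{m,m}$ inside $S_m\boxempty S_m$ via \citep{Eppstein01,BO99} when two or more are stars, together with the (correct) observation that restricting a triangulation of $G_1\boxempty G_2\boxempty G_3$ to $V(H_1)\times V(H_2)\times V(H_3)$ yields a graph containing a triangulation of $H_1\boxempty H_2\boxempty H_3$. The subtlety you flag in case (b) --- that a triangulation of $S_m\boxempty P_m\boxempty P_m$ need not have the product form $S_m\boxempty G$ for a single triangulation $G$ of $P_m\boxempty P_m$ --- is genuine, and the paper resolves it exactly as you propose, by appealing to the assertion preceding the corollary that the Hex-Lemma argument of \citep{DEHMW21} is insensitive to the choice of triangulation.
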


%\begin{proof}
%By taking spanning trees, it suffices to prove the result for trees. Let $T_1$, $T_2$ and $T_3$ be $n$-vertex trees. Let $G$ be any triangulation of $T_1 \boxempty T_2 \boxempty T_3$. By \cref{l:PPPOverlap2}, $\overlap(\Tr(G), \bb{R}^2) \in \Omega (n)$. Say $T_i$ has maximum degree $\Delta_i$. Without loss of generality, $\Delta_1\geq\Delta_2\geq \Delta_3$. In $G$ each vertex is in at most $2(\Delta_1\Delta_2 + \Delta_1\Delta_3 + \Delta_2\Delta_3)$ triangles since there are at most two triangles for each vertex of $T_i$, edge of $T_j$ and edge of $T_k$ (with $\{i,j,k\}=\{1,2,3\}$). So \cref{l:SNvsOverlap} is applicable with $c=2(\Delta_1\Delta_2 + \Delta_1\Delta_3 + \Delta_2\Delta_3)$. If $c\in o(n)$ then the claim follows from \cref{l:SNvsOverlap,l:PPPOverlap2}. Now assume that $c\in\Omega(n)$, implying $\Delta_1\Delta_2\in\Omega(n)$. Let $p_i$ be the number of vertices in the longest path in $T_i$. So $\Delta_ip_i\geq n$. If $\Delta_2\in\omega(1)$ (as a function of $n$), then $T_1\boxempty T_2$ contains a 1-subdivision of a complete bipartite graph $K_{m,m}$ with $m\in\omega(1)$, which has unbounded stack-number \citep{Eppstein01,BO99}. Otherwise $\Delta_2,\Delta_3\in O(1)$ (as a function of $n$) and $\Delta_1\in\Omega(n)$. Thus $p_2,p_3\in\Omega(n)$. Hence $G$ contains $S_{\Delta_1}\boxempty H$ where $S_m$ is the $m$-vertex star and $H$ is a triangulation of $P_{p_2}\boxempty P_{p_3}$, which has unbounded stack-number by \cref{StarPathPath}.
%\end{proof}

The best previously known upper bound on $\sn(P_n\boxtimes P_n\boxtimes P_n)$ was $O(n)$, which follows from Theorem~1 of \citet{DMY21} or from Corollary~1 of \citet{Pupyrev20a}. The upper bound in \cref{t:StrongProductPaths} follows from a more general result in \cref{UpperBoundSection}, which says that $\sn(G \boxtimes P_n)\in O(n^{1/2-\epsilon})$ for some $\epsilon>0$ whenever the graph $G$ has bounded stack-number and bounded maximum degree. The proof is based on families of permutations derived from Hadamard matrices.

Our final results concern maximum degree. \cref{t:TreeTriangulation} implies that $(P_n \boxslash P_n \boxslash P_n)_{n\in\mathbb{N}}$ is a family of graphs with maximum degree 12, unbounded stack-number and bounded queue-number (defined in \cref{QueueConnections}). It is natural to ask what is the smallest bound on the maximum degree in such a family. We prove the answer is 6 or 7. 

\begin{thm}\label{t:Maxsnqn}
The least integer \(\Delta_0\) such that there exists a graph family with maximum degree \(\Delta_0\), unbounded stack-number and bounded queue-number satisfies \(\Delta_0 \in \{6, 7\}\).
\end{thm}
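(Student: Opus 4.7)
The statement has an upper and a lower bound on $\Delta_0$, which I would attack independently.

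\emph{Upper bound ($\Delta_0 \leq 7$).} The plan is to exhibit a graph family with maximum degree $7$, bounded queue-number, and unbounded stack-number. The natural starting point is $(P_n \boxslash P_n \boxslash P_n)_{n \in \mathbb{N}}$, which already has bounded queue-number as a subgraph of $P_n \boxtimes P_n \boxtimes P_n$ and unbounded stack-number by \cref{t:TreeTriangulation}, but maximum degree $12$. I would look for a spanning subgraph in which every interior vertex loses exactly $5$ of its $12$ incident edges while the topological overlap argument still survives. A concrete first attempt is to delete the diagonal edges in two of the three coordinate planes, which drops each interior vertex's degree to $8$, and then perform one further degree-reducing surgery---for example removing one direction of grid edges on a well-chosen sublayer, or replacing a matching of diagonal edges by short subdivision paths so that each original vertex loses exactly one more incident edge. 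The main obstacle is certifying that the resulting graph still contains a subgraph that is a triangulation of a product of three sufficiently large trees, so that \cref{l:PPPOverlap2,l:SNvsOverlap} still deliver $\sn \in \Omega(n^{1/3})$; in the subdivision variant one instead has to redo the overlap estimate directly on the modified triangle complex, ensuring that subdividing does not collapse the linear lower bound on $\overlap(\Tr(G), \mathbb{R}^2)$.

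\emph{Lower bound ($\Delta_0 \geq 6$).} Here the goal is to show that if $G$ has $\Delta(G) \leq 5$ and queue-number at most $q$, then $\sn(G)$ is bounded by a function of $q$ alone. The plan is to combine two structural tools. First, the theorem of Dujmović--Morin--Wood that bounded queue-number implies bounded layered tree-width yields a layering $V(G) = V_1 \cup V_2 \cup \cdots$ together with a tree decomposition whose bags intersect each $V_i$ in at most $t = t(q)$ vertices. Second, I would argue that under the maximum-degree bound $\Delta(G) \leq 5$ this layered tree decomposition can be converted into a layered path decomposition of bounded width: a small maximum degree forbids too many branches of the tree decomposition from being simultaneously active in a bounded window of layers, since any such branching would force some vertex to have too many neighbors. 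Once bounded layered pathwidth is in place, the Bannister--Eppstein inequality $\sn(G) = O(\operatorname{lpw}(G))$ gives the desired bounded stack-number. The main obstacle is making the tree-to-path conversion precise; this is exactly the place where the threshold $5$ must enter, and one expects the argument to collapse at degree $6$ in a way that is consistent with the upper-bound construction.
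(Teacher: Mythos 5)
Both halves of your proposal would, as sketched, run into genuine obstructions, and the paper takes substantially different routes.

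\emph{Upper bound.} Your plan modifies \(P_n \boxslash P_n \boxslash P_n\) by deleting diagonals in two coordinate planes and then doing one more surgery, aiming to land on degree \(7\) while retaining a triangulation of a product of three trees. This cannot work as stated: any triangulation of \(P_n \boxempty P_n \boxempty P_n\) has interior vertices of degree at least \(12\) (there are \(\sim 3n^3\) diagonals, each contributing two incidences, over \(\sim n^3\) vertices, so the average number of diagonals per interior vertex is already \(6\), plus the \(6\) grid edges), and once you delete the diagonals from two of the three coordinate planes the layers \(X(u)\) in the proof of \cref{l:PPPOverlap2} are no longer triangle complexes homeomorphic to disks, so the bramble argument collapses. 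The missing idea is to abandon the cubical lattice altogether: the paper builds a graph \(G_n\) from the tessellation of \(\mathbb{R}^3\) by truncated octahedra, placing triangulated plane gadgets \(\Jfour\) and \(\Jsix\) on the square and hexagonal faces. This gives maximum degree exactly \(7\) (\cref{l:DeltaGinftyEq7}), cubic growth and hence bounded queue-number via \cref{c:GrowthQueue}, and a family of disk-homeomorphic ``layers'' \(X^a_i\) satisfying the hypotheses of \cref{c:bramble}, so the same topological-overlap machinery (\cref{l:bramble,l:SNvsOverlap}) still yields \(\sn(G_n)\in\Omega(n^{1/3})\) (\cref{SnGn}). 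You correctly sensed that the main difficulty is preserving the overlap lower bound, but the resolution is a new cell complex, not a perturbation of the grid.

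\emph{Lower bound.} The first step of your argument is false: the Dujmovi\'c--Morin--Wood theorem says that bounded layered treewidth implies bounded queue-number, not the converse, and the converse fails. Indeed \(P_n\boxtimes P_n\boxtimes P_n\) has bounded queue-number but layered treewidth \(\Omega(n)\) (its treewidth is \(\Theta(n^2)\) while its radius is \(O(n)\), and a graph of radius \(r\) and layered treewidth \(k\) has treewidth \(O(rk)\)). So you never obtain the layered tree decomposition your plan starts from; I am also not aware of the inequality \(\sn(G)=O(\operatorname{lpw}(G))\) you attribute to Bannister--Eppstein. The paper's actual route through which the threshold \(5\) enters is the Haxell--Szab\'o--Tardos theorem: graphs of maximum degree at most \(5\) have clustered chromatic number \(2\). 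It then proves \cref{ClusteredQnSn}: if a class has bounded queue-number and clustered chromatic number at most \(2\), then contracting each monochromatic cluster gives a bipartite \(\ell\)-small minor \(H\), whose queue-number is bounded by \cref{ShallowMinorQueue} and hence whose stack-number is bounded by the bipartite inequality \(\sn\le 2\qn\) (\cref{BipartiteSnQn}); finally \(G\subseteq H\boxtimes K_\ell\) and \cref{StackCompleteProduct} bounds \(\sn(H\boxtimes K_\ell)\). Your intuition that ``\(5\) must enter somewhere'' is right, but it enters through clustered colouring (and the Hex Lemma obstruction at degree \(6\)), not through any treewidth-to-pathwidth conversion.
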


The proof of the upper bound in \cref{t:Maxsnqn} uses the same topological machinery used to prove \cref{t:TreeTriangulation}, and is based on the tesselation of $\mathbb{R}^3$ by truncated octahedra. The proof of the lower bound exploits a connection with clustered colouring.

\section{Connections}
\label{QueueConnections}

This section provides further motivation for our results by discussing connections with related graph parameters, minors and growth.

Consider a graph $G$. The \defn{geometric thickness} of $G$ is the minimum $k\in\mathbb{N}_0$ for which there is a straight-line drawing of $G$ and a partition of $E(G)$ into $k$ plane subgraphs; see \citep{BMW06,DEH00,DW07}. A $k$-stack layout of $G$ defines such a drawing and edge-partition (with the vertices drawn on a circle in the order given by the stack layout). Thus the geometric thickness of $G$ is at most its stack-number. The \defn{slope-number} of $G$ is the minimum $k\in\mathbb{N}_0$ for which there is a straight-line drawing of $G$ with $k$ distinct edge slopes; see \citep{BMW06,PP06,DESW07,DSW07,ABH06,KPPT08}. Since edges of the same slope do not cross, the geometric thickness of $G$ is at most its slope-number. 

Note that $P_n \boxslash P_n \boxslash P_n$ has slope-number and geometric thickness at most 6 (simply project the natural 3-dimensional representation to the plane). Hence \cref{t:StrongProductPaths} provides a family of graphs with bounded slope-number, bounded geometric thickness, and unbounded stack-number. \citet{Eppstein01} previously constructed a graph family with bounded geometric thickness and unbounded stack-number, but not with bounded slope-number (since the graphs in question have unbounded maximum degree). 

For a graph $G$ and $q\in\mathbb{N}_0$, a \defn{$q$-queue layout} of $G$ consists of an ordering $(v_1,\ldots,v_n)$ of $V(G)$ together with a function $\phi: E(G) \to \{1,\dots,q\}$ such that for all edges $v_iv_j,v_kv_\ell \in E(G)$ with $i < k< \ell <j$ we have $\phi(v_iv_j) \neq \phi(v_kv_\ell)$. Each set $\phi^{-1}(k)$ is called a \defn{queue}. Edges in a queue do not nest with respect to $(v_1,\ldots,v_n)$, and therefore behave in a FIFO manner (hence the name). The \defn{queue-number $\qn(G)$} of a graph $G$ is the minimum $q\in\mathbb{N}_0$ for which there exists a $q$-queue layout of $G$. 

Stack and queue layouts are considered to be dual to each other~\citep{HLR92}. However, for many years it was open whether there is a graph family with bounded queue-number and unbounded stack-number, or bounded stack-number and unbounded queue-number. \cref{StarPathPath} by \citet{DEHMW21} resolved the first question, since they also showed that $\qn(S_n\boxempty (P_n\boxslash P_n))\leq 4$. Observe that a \(4\)-queue layout of \(P_n \boxslash P_n \boxslash P_n\) can be obtained by taking the lexicographical ordering \((v_1, \ldots, v_{n^3})\) of the vertices and letting \(\phi(v_i v_j)\) be determined by which of the sets \(\{1\}\), \(\{n, n+1\}\), \(\{n^2, n^2 + 1\}\), \(\{n^2 + n\}\) contains \(|i - j|\). By \cref{t:StrongProductPaths}, $(P_n \boxslash P_n \boxslash P_n)_{n\in\mathbb{N}}$ is therefore another graph family with queue-number at most 4 and unbounded stack-number. 

We now discuss the behaviour of stack- and queue-number with respect to taking minors. Let $G$, $H$ and $J$ be graphs. Let $r,s\in\mathbb{N}_0$ and let $k\geq 0$ be a half-integer (that is, $2k\in\mathbb{N}_0$). $H$ is a \defn{minor} of $G$ if a graph isomorphic to $H$ can be obtained from $G$ by vertex deletions, edge deletions, and edge contractions. A \defn{model} of $H$ in $G$ is a function $\mu$ with domain $V(H)$ such that: $\mu(v)$ is a connected subgraph of $G$; $\mu(v)\cap \mu(w)=\emptyset$ for all distinct $v,w\in V(G)$; and $\mu(v)$ and $\mu(w)$ are adjacent for every edge $vw \in E(H)$. Observe that $H$ is a minor of $G$ if and only if $G$ contains a model of $H$. For $r\in\mathbb{N}_0$, if there exists a model $\mu$ of $H$ in $G$ such that $\mu(v)$ has radius at most $r$ for all $v \in V(H)$, then $H$ is an \defn{$r$-shallow minor} of $G$. For $s\in\mathbb{N}_0$, if there exists a model $\mu$ of $H$ in $G$ such that $|V(\mu(v))|\leq s$ for all $v \in V(H)$, then $H$ is an \defn{$s$-small minor} of $G$. We say that $J$ is an \defn{$(\leq s)$-subdivision} of $H$ if $J$ can be obtained from $H$ by replacing each edge by a path with at most $s$ internal vertices. If every path that replaces an edge has exactly $s$ vertices, then $J$ is the \defn{$s$-subdivision} of $H$. We say that $H$ is a \defn{$k$-shallow topological minor} of $G$ if a subgraph of $G$ is isomorphic to a $(\leq 2k)$-subdivision of $H$. Note that if a graph $H$ is an $s$-small minor or an $s$-shallow topological minor of a graph $G$, then $H$ is an $r$-shallow minor of $G$ whenever $s\leq r$.

\citet{BO99} conjectured that stack-number is `well-behaved' under shallow topological minors in the following sense:

\begin{conjecture}[\cite{BO99}]
\label{B_conj}
There exists a function $f$ such that for every graph $G$ and half-integer $k\geq 0$, if $H$ is any $k$-shallow topological-minor of $G$, then $\sn(H) \leq f(\sn(G),k)$.
\end{conjecture}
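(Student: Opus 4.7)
My plan is to proceed by induction on $k$. The base case $k = 0$ is immediate because $H$ is then isomorphic to a subgraph of $G$ and stack-number is monotone under subgraph inclusion. For the inductive step, the natural reduction is to a single \emph{suppression} operation: prove that if $\sn(G) \leq s$ and $G^\prime$ is obtained from $G$ by suppressing a degree-$2$ vertex (replacing a path $x$-$y$-$z$ by the edge $xz$), then $\sn(G^\prime) \leq f(s)$ for some absolute function $f$. Iterating this bound through $2k$ rounds of suppression, and absorbing the subgraph step, would yield a bound of the form $f^{(2k)}(s)$ in \cref{B_conj}.

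For the single-suppression step, I would take an $s$-stack layout $(\sigma, \phi)$ of $G$ and attempt to produce a layout of $G^\prime$ by deleting $y$ from $\sigma$ and routing the new edge $xz$. The cleanest attempt is a product construction: label each new edge $xz$ by the triple consisting of $\phi(xy)$, $\phi(yz)$, and the relative order of $x,y,z$ in $\sigma$, aiming for a bound of order $O(s^2)$. The main combinatorial task is to check that two new edges sharing the same label do not cross, using the non-crossing property inside each of the original $s$ stacks of $G$.

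The principal obstacle---and presumably the reason \cref{B_conj} has remained open for over two decades---is that many degree-$2$ vertices typically need to be suppressed simultaneously, and the resulting long-range edges can interact in combinations not controlled by the single-vertex argument above. A promising alternative route is to use the topological framework developed in this paper, namely \cref{l:SNvsOverlap}, and attempt to prove that $\overlap(\Tr(\cdot), \bb{R}^2)$, or a refinement tailored to handling subdivisions (for instance an overlap parameter defined on a complex built from short cycles instead of triangles), is monotone under shallow topological minors. Such a topological monotonicity would combine with \cref{l:SNvsOverlap} to yield the conjecture; identifying the right invariant and establishing its monotonicity is the key missing ingredient, and I expect this step to be the hardest part of any successful attack on \cref{B_conj}.
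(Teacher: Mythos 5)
The statement you are trying to prove is a \emph{conjecture} of Blankenship and Oporowski, and it is \emph{false}: the paper does not prove \cref{B_conj}, it records (immediately after the statement) that Dujmovi\'c, Eppstein, Hickingbotham, Morin and Wood disproved it. Any proof strategy, however cleverly organised, is therefore doomed, and the ``principal obstacle'' you identify is not a technical nuisance to be worked around but the actual location of the counterexample. Concretely, by \cref{QueueSubdiv} the $5$-subdivision of any graph $G$ with $\qn(G)\leq 4$ admits a $3$-stack layout, so the class of $5$-subdivisions of $S_n\boxempty(P_n\boxslash P_n)$ has stack-number at most $3$; yet $S_n\boxempty(P_n\boxslash P_n)$ is a $1$-shallow (indeed $2\tfrac{1}{2}$-shallow) topological minor of its $5$-subdivision, and by \cref{StarPathPath} these graphs have unbounded stack-number. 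So no function $f$ as in \cref{B_conj} exists even for $k=1$. The present paper supplies a second, bounded-degree counterexample via $P_n\boxslash P_n\boxslash P_n$: \cref{QueueSubdiv} again gives a $3$-stack layout of its $5$-subdivision, while the lower bound in \cref{t:StrongProductPaths} gives $\sn(P_n\boxslash P_n\boxslash P_n)\in\Omega(n^{1/3})$; this is precisely what drives \cref{StackSmallMinorBad}.

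Your single-suppression lemma is in fact true and cheap --- after deleting one degree-$2$ vertex you can place the single new edge in a fresh stack, so $\sn(G')\leq\sn(G)+1$ --- but this is exactly the kind of bound that cannot be iterated: in a round of suppressions you must contract one interior vertex on each of $|E(H)|$ subdivision paths \emph{simultaneously}, and $|E(H)|$ is not controlled by $\sn(G)$ or $k$. Your proposed $O(s^2)$ bound via the triple-label does not hold either: two new long edges carrying the same pair of original stack colours can still cross, because the crossing is witnessed by a four-point pattern that is split between the two suppressed vertices and is not constrained by either original stack separately. The counterexamples above show that no repair of this step is possible. The topological route you sketch also cannot succeed: $\overlap(\Tr(\cdot),\bb{R}^2)$ is manifestly not monotone under shallow topological minors, since subdividing every edge of a graph destroys all triangles and makes the triangle complex $1$-dimensional (hence of overlap $O(1)$ with $\bb{R}^2$), while the shallow topological minor can have huge overlap; this is the same phenomenon, seen through the topological lens. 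The correct reading of the literature is that \cref{QueueSubdiv} together with \cref{StarPathPath} (or \cref{t:StrongProductPaths}) \emph{refutes} \cref{B_conj}, and the paper's contribution in this direction is \cref{StackSmallMinorBad}, the analogous negative result for small minors.
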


\citet{DEHMW21} disproved \cref{B_conj}. Their proof used the following lemma by \citet{DujWoo05}.

\begin{lem}[\citep{DujWoo05}]
\label{QueueSubdiv}
For every graph $G$, if \(s = 1+2 \lceil \log_2\qn(G)\rceil\) then the $s$-subdivision of \(G\) has a 3-stack layout. 
\end{lem}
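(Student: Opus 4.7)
The plan is to convert a $q$-queue layout of $G$ (with $q = \qn(G)$) into a 3-stack layout of the $s$-subdivision $G'$, using the $s = 2k+1$ subdivision vertices of each edge (where $k = \lceil \log_2 q\rceil$) to encode that edge's queue-number in binary. This is the standard conceptual tool for converting queues to stacks: each bit of the queue number is recorded geometrically in the position of a pair of subdivision vertices.

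\textbf{Setup.} Fix a $q$-queue layout of $G$ with vertex ordering $\sigma = (v_1,\ldots,v_n)$ and queue-assignment $\phi\colon E(G) \to \{0,1,\ldots,2^k-1\}$. For each edge $e=v_iv_j$ with $i<j$, write $\phi(e)$ in binary as $b_1(e)\cdots b_k(e)$ and denote the subdivided path of $e$ in $G'$ by $v_i = y_0^e, y_1^e, y_2^e, \ldots, y_s^e, y_{s+1}^e = v_j$.

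\textbf{Vertex ordering.} I would construct an ordering $\tau$ of $V(G')$ built from $2k+1$ ``bands'' of subdivision vertices arranged around each original vertex: going outward from $v_i$, the vertex $y_j^e$ (for $1\le j\le k$) is placed in the $j$-th left band at an ``inner'' or ``outer'' position determined by the bit $b_j(e)$, and symmetric rules place $y_{s+1-j}^e$ in the $j$-th right band near $v_j$. The middle vertices $\{y_{k+1}^e : e\in E(G)\}$ form a single central block placed between all left and right bands. Then assign the $s+1 = 2k+2$ edges of each subdivided path to three stacks: the two ``central'' edges adjacent to $y_{k+1}^e$ go to stack~1; the $k$ ``left'' edges $v_iy_1^e,\,y_1^e y_2^e,\,\ldots,\,y_{k-1}^e y_k^e$ go to stack~2; the $k$ symmetric ``right'' edges go to stack~3.

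\textbf{Verification.} I would check that no two same-stack edges cross in $\tau$. After unwinding the band structure, two left-side level-$j$ edges from paths of distinct original edges $e$ and $e'$ can cross in $\tau$ only when their first $j-1$ bits agree (so their level-$j$ vertices live in the same sub-band) and their endpoints nest in $\sigma$; propagating the agreement through all $k$ bits forces $\phi(e) = \phi(e')$, contradicting the non-nesting property of the queue $\phi^{-1}(\phi(e))$. Symmetric reasoning handles stack~3, and non-crossing in stack~1 follows directly from the nested structure of the central block.

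\textbf{Main obstacle.} The most delicate part is designing the band geometry and bit-encoded placement rule precisely enough that the non-crossing check for same-stack edges reduces cleanly to queue non-nesting at a single level. I expect to organise this by induction on $k$: the inductive step halves the number of queues at the cost of two new subdivision vertices per edge (encoding one bit), and the base case $k=0$ (a single queue) is a trivial 3-stack layout of the $1$-subdivision. The verification at each inductive step reduces to a single-bit bookkeeping argument, which I believe is the natural way to package the proof.
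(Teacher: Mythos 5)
Your high-level idea — encoding the queue index of each edge in binary via the positions of its subdivision vertices — is indeed the mechanism behind the Dujmović--Wood result, and the split of the subdivided path into ``left'', ``right'' and ``central'' pieces is a sensible starting point. However, the construction as written does not go through, and the gaps are substantive rather than cosmetic.

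The most concrete problem is the stack assignment. With $y_1^e,\dots,y_k^e$ placed near $v_i$, $y_{k+2}^e,\dots,y_{2k+1}^e$ placed near $v_j$, and all middle vertices $y_{k+1}^e$ gathered into a single contiguous central block (say at the end of the ordering), take two original edges $e=v_av_b$ and $e'=v_{a'}v_{b'}$ with $a<a'<b<b'$ in $\sigma$; such interleaving pairs are allowed in any single queue and occur already in a path. Writing $c_e$ and $c_{e'}$ for the positions of $y_{k+1}^e$ and $y_{k+1}^{e'}$ in the central block, the four chords you put in stack~$1$ are roughly $(a,c_e)$, $(b,c_e)$, $(a',c_{e'})$ and $(b',c_{e'})$, with $a<a'<b<b'$ and $c_e,c_{e'}$ beyond all of them. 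Non-crossing of $(a,c_e)$ with $(a',c_{e'})$ forces $c_{e'}<c_e$, while non-crossing of $(b,c_e)$ with $(a',c_{e'})$ forces $c_e<c_{e'}$. So stack~$1$ contains a crossing no matter how the central block is internally ordered, independently of the queue-colouring $\phi$; the two central edges of a path cannot both go into one stack with this geometry, and the same conflict occurs if the central block is placed at the start instead. Beyond this, the inductive repackaging does not compose: iterating a lemma that sends a $q'$-queue graph to its $2$-subdivision with $\lceil q'/2\rceil$ queues for $k$ rounds produces a $(3^k-1)$-subdivision of $G$, not a $2k$-subdivision, because subdivisions compound multiplicatively. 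And the $k=0$ base case is not trivial — the $1$-subdivision of a $1$-queue graph does admit a $2$-stack layout (original vertices first, then midpoints in reverse order of left endpoints, with the two halves of each path in separate stacks), but with both halves in one stack as your general rule dictates it fails for exactly the reason above.
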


\cref{QueueSubdiv} implies that the $5$-subdivision of $S_n\boxempty(P_n\boxslash P_n)$ admits a $3$-stack layout. Using \cref{StarPathPath}, \citet{DEHMW21} concluded there  exists a graph class $\mc{G}$ with bounded stack-number for which the class of 1-shallow topological minors of graphs in $\mc{G}$ has unbounded stack-number. Thus stack-number is not well-behaved under shallow topological minors. We now prove an analogous result for small minors. 

\begin{thm}
\label{StackSmallMinorBad}
There exists a graph class $\mc{G}$ with bounded stack-number for which the class of 2-small minors of graphs in $\mc{G}$ has unbounded stack-number. 
\end{thm}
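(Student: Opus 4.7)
The plan is to follow \citet{DEHMW21}'s strategy for shallow topological minors (recalled immediately before this statement), adapting the subdivision trick so that it produces a $2$-small minor instead of a shallow topological minor. Let $H_n := S_n \boxempty (P_n \boxslash P_n)$, which satisfies $\qn(H_n) \leq 4$ by \citet{DEHMW21} and $\sn(H_n) \in \Omega(\sqrt{\log\log n})$ by \cref{StarPathPath}. By \cref{QueueSubdiv}, its $5$-subdivision $G_n$ admits a $3$-stack layout, so $\mathcal{G} := \{G_n : n \in \mathbb{N}\}$ has bounded stack-number; this is the same class that witnesses \citet{DEHMW21}'s shallow-topological-minor version.

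The key observation is that the $3$-subdivision $H_n'$ of $H_n$ is a $2$-small minor of $G_n$. For each edge $uv \in E(H_n)$ corresponding to the $6$-edge path $u, x_1, x_2, x_3, x_4, x_5, v$ in $G_n$, I take the five branch sets $\{u\}$, $\{x_1, x_2\}$, $\{x_3\}$, $\{x_4, x_5\}$, $\{v\}$. Each has size at most two and induces either a vertex or an edge of $G_n$, so each is a valid branch set. The singletons $\{u\}$ and $\{v\}$ are shared consistently across all edges of $H_n$ incident to $u$ or $v$. The resulting minor replaces each edge of $H_n$ by a path on five vertices, which is exactly the $3$-subdivision of that edge.

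The remaining step, which I expect to be the main obstacle, is to show $\sn(H_n') \to \infty$. I plan to reprove \cref{StarPathPath} directly on $H_n'$, exploiting that the original argument is topological in nature: it combines a pigeonhole on the positions of the $n^2$ central copies $(c,g)$ in the stack ordering with an application of the Hex Lemma to a triangulation of $P_n \boxempty P_n$. In $H_n'$ each of the $n + \deg_G(g)$ edges incident to $(c,g)$ becomes a $4$-edge path rather than a single edge, but the central vertex itself is unchanged, and the Hex Lemma is a statement about crossing paths in a triangulated disk that is insensitive to subdivision. Tracking the counting through the subdivision should yield $\sn(H_n') \in \Omega(\sqrt{\log\log n})$, weakened by at most a constant factor depending only on the subdivision depth.

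The subtlety is that the $\overlap$-based route via \cref{l:PPPOverlap2,l:SNvsOverlap} is not directly available here: subdivision destroys every triangle, so $\Tr(H_n')$ is one-dimensional and has trivial overlap into $\mathbb{R}^2$. The adaptation of \cref{StarPathPath} must therefore proceed combinatorially, tracking how the pigeonhole and Hex-Lemma steps transport through subdivisions and verifying that the loss from working with $4$-edge paths in place of single edges is absorbed into a constant.
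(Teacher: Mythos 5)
Your reduction to a $2$-small minor is sound as far as it goes, but the crux of your plan---proving $\sn(H_n')\to\infty$ for the $3$-subdivision $H_n'$ of $S_n\boxempty(P_n\boxslash P_n)$---is a genuine gap, and you are too optimistic in dismissing it. The assertion that the Hex-Lemma-based argument of \cref{StarPathPath} ``is insensitive to subdivision'' does not hold up: subdivision radically alters the crossing structure of a linear layout, since the subdivision vertices can be interleaved arbitrarily in the ordering, so two original edges that would have crossed may not yield crossing edges after subdivision. Indeed, the whole content of \cref{QueueSubdiv} is that subdividing sufficiently collapses stack-number to $3$, and nothing in the paper or in \citet{DEHMW21} tells you where the transition happens between the $0$-subdivision (unbounded) and the $5$-subdivision (bounded). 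You would be proving something genuinely new and nontrivial at this step, not transporting an existing proof.

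The paper's proof sidesteps this entirely with a pigeonhole argument that you missed. Starting from $\mathcal{G}_0 = \{G : \sn(G)\le 3\}$, define $\mathcal{G}_{i+1}$ to be the $2$-small minors of $\mathcal{G}_i$. Since $P_n\boxslash P_n\boxslash P_n$ is a $37$-small minor of its $5$-subdivision (max degree $12$, and $12\lceil 5/2\rceil + 1 = 37$), and the $5$-subdivision lies in $\mathcal{G}_0$, one checks that $P_n\boxslash P_n\boxslash P_n\in\mathcal{G}_{37}$ for all $n$. Since $\sn(P_n\boxslash P_n\boxslash P_n)$ is unbounded by \cref{t:TreeTriangulation}, $\mathcal{G}_{37}$ has unbounded stack-number, so there is some $i\in\{0,\dots,36\}$ at which the stack-number of $\mathcal{G}_i$ is bounded but that of $\mathcal{G}_{i+1}$ is not. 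The existence proof is nonconstructive in exactly the place where your constructive attempt runs into trouble: it never needs to know the stack-number of any specific intermediate minor. (The paper also uses $P_n\boxslash P_n\boxslash P_n$ rather than $S_n\boxempty(P_n\boxslash P_n)$, which gives a cleaner degree bound, but that difference is cosmetic; the pigeonhole idea is the essential point.)
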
	

\begin{proof}
\cref{QueueSubdiv} implies that the $5$-subdivision of $P_n \boxslash P_n \boxslash P_n$ admits a $3$-stack layout. $P_n \boxslash P_n \boxslash P_n$ is a $37$-small minor of its $5$-subdivision since $P_n \boxslash P_n \boxslash P_n$ has maximum degree at most $12$ and \(12 \lceil \frac{5}{2}\rceil+1=37\). Let \(\mathcal{G}_0\) be the class of graphs with stack-number at most \(3\), and for each \(i \in\mathbb{N}_0 \), let \(\mathcal{G}_{i+1}\) be the class of \(2\)-small minors of graphs in \(\mathcal{G}_i\). Thus, \(\mathcal{G}_{37}\) contains all \(37\)-small minors of graphs in \(\mathcal{G}_0\), including all graphs of the form \(P_n \boxslash P_n \boxslash P_n\). Hence, there exists \(i\in\{0,\dots,37\}\) such that stack-number is bounded for  \(\mathcal{G}_i\) and unbounded for \(\mathcal{G}_{i+1}\). 
\end{proof}

In contrast to \cref{StackSmallMinorBad}, queue-number is well-behaved under small minors. In fact, the following lemma shows it is even well-behaved under shallow minors, which is a key distinction between these parameters.

\begin{lem}[\citep{HW21b}] 
	\label{ShallowMinorQueue}
	For every graph $G$ and for every $r$-shallow minor $H$ of $G$,
	$$\qn(H) \leq 2r(2 \qn(G))^{2r}.$$
\end{lem}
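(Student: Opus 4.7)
Set $q = \qn(G)$ and fix a $q$-queue layout of $G$ consisting of a vertex ordering $\preceq$ and an edge labeling $\phi \colon E(G) \to \{1, \ldots, q\}$. Let $\mu$ be a model of $H$ in $G$ witnessing that $H$ is an $r$-shallow minor. For each $v \in V(H)$, pick a center $c_v \in V(\mu(v))$ at which $\mu(v)$ has radius at most $r$, and fix a BFS tree $T_v \subseteq \mu(v)$ rooted at $c_v$. For each edge $e = uv \in E(H)$, pick a witnessing edge $x_e y_e \in E(G)$ with $x_e \in V(\mu(u))$ and $y_e \in V(\mu(v))$, and let $P_e$ be the walk in $G$ obtained by concatenating the $T_u$-path from $c_u$ to $x_e$, the edge $x_e y_e$, and the $T_v$-path from $y_e$ to $c_v$; thus $P_e$ has length at most $2r + 1$.

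The plan is to take the vertex ordering of $H$ to be the restriction of $\preceq$ to $\{c_v : v \in V(H)\}$, and to assign to each edge $e \in E(H)$ the label
\[
\bigl(\ell_e,\ (\epsilon_1, \phi(f_1)),\ \ldots,\ (\epsilon_{\ell_e}, \phi(f_{\ell_e}))\bigr),
\]
where $\ell_e$ is the length of $P_e$, $f_1, \ldots, f_{\ell_e}$ are the edges of $P_e$ traversed from $c_u$ to $c_v$, and $\epsilon_i \in \{+, -\}$ records whether $f_i$ steps forward or backward in $\preceq$. After a modest optimisation (for example, one of the sign coordinates is forced by the others together with $\ell_e$, and some padding conventions can be used to equalise the lengths) the number of distinct labels can be bounded by $2r(2q)^{2r}$, matching the target.

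The heart of the argument is the claim that no two $H$-edges sharing a label nest with respect to the chosen ordering. Suppose for contradiction that $e_1 = u_1 v_1$ and $e_2 = u_2 v_2$ share a label and nest, with $c_{u_1} \prec c_{u_2} \prec c_{v_2} \prec c_{v_1}$. Then $P_{e_1}$ and $P_{e_2}$ have the same length and identical sign-and-queue-label sequences. Traversing the two walks in parallel from the outer endpoints inward, the matched signs $\epsilon_i$ force the current pairs of vertices to maintain a nested configuration under $\preceq$, while the matched queue labels then exhibit, at some step $i$, a pair of $G$-edges with identical $\phi$-value whose endpoints nest in $\preceq$ --- contradicting that $\phi$ is a queue layout of $G$.

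The main obstacle is making this parallel-traversal argument rigorous. One must carefully track how the nested configuration can degenerate (for instance when the two walks visit a common vertex, or when consecutive edges meet at a shared endpoint that breaks strict nesting), verify that a strictly nested pair of $G$-edges is nonetheless extracted at the first such breakdown, and ensure that distinct $H$-vertices give distinct centers in $G$ so the initial nesting does not collapse at step $0$. These issues can be controlled via the tree structure of the $T_v$ and the disjointness of the branch sets $\mu(v)$, yielding the claimed bound $\qn(H) \leq 2r(2q)^{2r}$.
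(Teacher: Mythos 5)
The paper does not prove this lemma; it is cited verbatim from \citep{HW21b}, so there is no internal proof to compare against. Judged on its own terms, your sketch has the right architecture and the parallel-traversal core does in fact go through: once you note that the walks $P_{e_1}$ and $P_{e_2}$ are entirely contained in $\mu(u_1)\cup\mu(v_1)$ and $\mu(u_2)\cup\mu(v_2)$ respectively, and these four branch sets are pairwise disjoint when $e_1,e_2$ nest, the two walks are vertex-disjoint. Hence at the first index $i$ with $a_i\prec b_i$ but $b_{i+1}\preceq a_{i+1}$ one always has $a_{i+1}\neq b_{i+1}$ and genuine edges $a_ia_{i+1}$, $b_ib_{i+1}$; matching the sign coordinate at step $i$ then forces a nested pair (whichever of the two sign cases holds), contradicting the queue property. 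The ``degeneracies'' you flag do not actually occur, so that part is fine.

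The genuine gap is in the counting, and I do not think the ``modest optimisation'' you gesture at exists. Your labels range over $\sum_{\ell=1}^{2r+1}(2q)^{\ell}$ values, which is on the order of $(2q)^{2r+1} = 2q\,(2q)^{2r}$; the target $2r(2q)^{2r}$ is smaller whenever $q>r$. Concretely, for $r=1$ the claimed bound is $8q^{2}$ while your scheme gives $2q+4q^{2}+8q^{3}$, which is larger for every $q\geq 1$. Neither of your two suggested fixes closes this. Padding to a common length $2r+1$ gives $(2q)^{2r+1}$, still too big, and introduces undefined sign/label coordinates for the lazy steps. And the claim that one sign coordinate is ``forced by the others together with $\ell_e$'' is false: knowing $c_u\prec c_v$ and the first $\ell_e-1$ signs does not determine the last, because the step magnitudes are unconstrained (e.g., with $\ell_e=2$ one can have $c_u \prec a_1 \prec c_v$ or $a_1 \prec c_u \prec c_v$). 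So as written, your argument proves \emph{a} bound of the form $O_r\!\left((2q)^{2r+1}\right)$ but not the stated $2r(2q)^{2r}$; hitting that constant requires either a more economical encoding of the walk (for instance, not spending a full sign-and-label pair on the witnessing edge $x_ey_e$) or a different parameterisation of $P_e$, and that refinement is the missing idea here.
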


We now compare stack and queue layouts with respect to growth. 
The \defn{growth} of a graph $G$ is the function $f_G\colon\mathbb{N}\to \mathbb{N}$ such that \(f_G(r)\) is the maximum number of vertices in a subgraph of $G$ with radius at most $r$. Similarly, the \defn{growth} of a graph class \(\mc{G}\) is the function $f_{\mc{G}}\colon\mathbb{N}\to \mathbb{N}\cup\{\infty\}$ where \(f_{\mc{G}}(r)=\sup\{f_G(r)\colon G\in\mc{G}\}\). A graph class $\mathcal{G}$ has \defn{linear}/\defn{quadratic}/\defn{cubic}/\defn{polynomial growth} if $f_{\mathcal{G}}(r)\in O(r)/O(r^2)/O(r^3)/O(r^d)$ for some $d\in \mathbb{N}$. Let $\mathbb{Z}_n^d$ be the $d$-fold strong product $P_n\boxtimes\dots\boxtimes P_n$. Every subgraph in $\mathbb{Z}^d_n$ with radius at most $r$ has at most $(2r+1)^d$ vertices. Thus $(P_n\boxtimes P_n\boxtimes P_n)_{n\in \mathbb{N}}$ has cubic growth, so \cref{t:StrongProductPaths} implies that graph classes with cubic growth can have unbounded stack-number. In contrast, graphs with polynomial growth have bounded queue-number. \citet{KL07} established the following characterisation of graphs with polynomial growth.

\begin{thm}[\cite{KL07}]\label{t:Growth}
	If a graph $G$ has growth $f_G(r)\in O(r^d)$,	then $G$ is isomorphic to a subgraph of $\mathbb{Z}^{O(d\log d)}_n$ for some $n\in\mathbb{N}$. 
\end{thm}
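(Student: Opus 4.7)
The plan is to adapt the low-distortion $\ell_\infty$ embedding technique developed by Krauthgamer and Lee for metrics of bounded doubling dimension. The first step is to translate the hypothesis $f_G(r) \le Cr^d$ into a local doubling condition: for every $r \ge 1$ and every $v \in V(G)$, the ball of radius $2r$ around $v$ can be covered by $2^{O(d)}$ balls of radius $r$. This should follow from a standard packing argument, where any maximal $r$-separated subset $N$ of the ball of radius $2r$ yields pairwise disjoint $(r/2)$-balls all contained in the ball of radius $5r/2$, and comparing sizes via the polynomial growth bound forces $|N| = 2^{O(d)}$.

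Next I would build a hierarchy of nets $\mathcal{N}_0 \supseteq \mathcal{N}_1 \supseteq \cdots$, where $\mathcal{N}_i \subseteq V(G)$ is simultaneously $2^i$-separated and $2^i$-dense. Each $\mathcal{N}_i$ induces a Voronoi-type partition of $V(G)$ into clusters of diameter $O(2^i)$. The doubling property lets us properly colour the clusters at each scale with $O(d)$ colours so that any two clusters of the same colour at the same scale are at pairwise distance at least $2^{i+c}$ for some absolute constant $c$.

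For each scale $i$ and each colour $j$, define a real coordinate $\phi_{i,j}$ recording a truncated and rescaled distance from each vertex to the boundary of its colour-$j$ cluster at scale $i$. The vector of all these coordinates yields a constant-distortion embedding into $\ell_\infty^{O(d \log n)}$. The key Krauthgamer--Lee innovation is to cyclically identify coordinates at scales separated by an appropriate multiplicative gap, collapsing the total dimension to $O(d \log d)$; this identification does no harm because, for any fixed pair of vertices at graph distance $r$, only scales within a bounded logarithmic window of $\log r$ contribute non-trivially, so contributions from widely separated scales cannot interfere with one another. After rescaling by a sufficiently large integer factor $M$ and rounding to the integer lattice, the resulting map $\phi \colon V(G) \to \{1,\ldots,n\}^D$ with $D = O(d \log d)$ is injective and sends every edge of $G$ to a pair of points at $\ell_\infty$-distance exactly one, realising $G$ as a subgraph of $\mathbb{Z}^D_n$.

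The main obstacle is achieving the tight dimension $O(d \log d)$ rather than the straightforward $O(d \log n)$. This requires a careful combinatorial analysis of how coordinate contributions from different scales interact once identified cyclically, ensuring that neither injectivity nor the edge-to-unit-step property is destroyed. Executing this dimension reduction is the technical heart of the argument, and translating a constant-distortion $\ell_\infty$ embedding into a faithful subgraph embedding of $\mathbb{Z}^D_n$ requires additional care at the discretisation step, since one must simultaneously control integrality and the exact-distance-one condition on all edges.
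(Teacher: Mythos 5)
This result is cited from Krauthgamer and Lee~\citep{KL07}; the present paper does not reprove it, so there is no in-paper argument to compare against, and I assess your sketch on its own merits. There is a genuine gap at your very first step: polynomial growth does \emph{not} imply a doubling condition, and the packing argument you outline does not establish one. In that argument the pairwise-disjoint balls $B(w, r/2)$, $w \in N$, all lie in $B(v, 5r/2)$, which has at most $C(5r/2)^d$ vertices; to conclude $|N| \in 2^{O(d)}$ you would need a matching \emph{lower} bound $|B(w, r/2)| \in \Omega(r^d)$, and that is not part of the hypothesis. A concrete counterexample is the comb $C_n$: take a spine path $P_n$ and attach a tooth path $P_n$ at each spine vertex. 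One checks that $f_{C_n}(r) \leq 2(r+1)^2$ with constants independent of $n$, so $d=2$; yet if $v$ is the middle spine vertex, the $r$ tooth tips at graph distance exactly $2r$ from $v$ lying on teeth $0,1,\ldots,r-1$ are pairwise at distance $4r - 2j_1 > 2r$, so $B(v,2r)$ cannot be covered by fewer than $r$ balls of radius $r$. Thus the doubling constant of $C_n$ is $\Omega(n)$, unbounded over the family, even though $C_n$ is trivially a subgraph of $P_n \boxempty P_n \subseteq \mathbb{Z}^2_n$ and the theorem holds for it with room to spare. No argument that passes through a uniform doubling constant $2^{O(d)}$ can be correct.

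Because the net hierarchy, the $O(d)$-colouring of clusters at each scale, and the cyclic coordinate identification in your sketch all presuppose that uniform doubling constant, the rest of the argument inherits the gap; the actual proof in~\citep{KL07} must use the growth bound more directly (for instance, to control how many clusters of a given decomposition a ball of comparable radius can meet), rather than routing through doubling. Separately, even granting a constant-distortion $\ell_\infty$-embedding into $O(d\log d)$ coordinates, your final ``rescale and round'' step is a second unaddressed gap: the theorem needs an injective map sending every edge to a pair at $\ell_\infty$-distance exactly $1$, and rounding a bi-Lipschitz map to the integer lattice can collapse distinct vertices or stretch an edge to $\ell_\infty$-distance $2$. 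The construction in~\citep{KL07} is arranged so that each coordinate is integer-valued, is $1$-Lipschitz with respect to the graph metric, and changes by exactly $0$ or $\pm 1$ across every edge, rather than repairing an approximate embedding after the fact.
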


It follows from the upper bound on the queue-number of products by \citet{Wood05} that $\qn(\mathbb{Z}^d_n)\leq c\,3^d$ for some constant $c$. \cref{t:Growth} therefore implies the following.

\begin{cor}\label{c:GrowthQueue}
	If a graph $G$ has growth $f_G(r)\in O(r^d)$, then $\qn(G)\in 2^{O(d\log d)}$. 
\end{cor}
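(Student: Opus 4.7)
The plan is simply to assemble the two ingredients already recalled in the paragraph preceding the corollary: Theorem~\ref{t:Growth} of \cite{KL07} and the bound $\qn(\mathbb{Z}^d_n)\leq c\,3^d$ attributed to \cite{Wood05}. Given a graph $G$ with $f_G(r)\in O(r^d)$, Theorem~\ref{t:Growth} provides an absolute constant $\alpha$, a dimension $D\leq \alpha\, d\log d$, and an integer $n\in\mathbb{N}$ such that $G$ is isomorphic to a subgraph of $\mathbb{Z}^{D}_n$.

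The bridge between the two cited results is the elementary observation that queue-number is monotone under taking subgraphs: restricting the vertex ordering and the edge-colouring of any $q$-queue layout of a host graph $H$ to the vertex- and edge-sets of a subgraph $G\subseteq H$ produces a $q$-queue layout of $G$, since the non-nesting condition between any two edges of a common colour is inherited. Therefore
$$\qn(G)\leq \qn(\mathbb{Z}^{D}_n)\leq c\cdot 3^{D}.$$

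It then remains only to convert the base: $3^{D}=2^{(\log_{2} 3)D}\leq 2^{\alpha(\log_{2} 3)\,d\log d}$, and multiplying by the constant $c$ still leaves a bound of the form $2^{O(d\log d)}$. I do not anticipate any real obstacle: both of the deep inputs (the Krauthgamer--Lee embedding and Wood's product bound) are invoked as black boxes, and the only substantive addition is the subgraph-monotonicity of $\qn$, which is immediate from the definition.
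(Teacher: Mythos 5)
Your argument is correct and is precisely the one the paper has in mind: apply \cref{t:Growth} to embed $G$ as a subgraph of $\mathbb{Z}^{O(d\log d)}_n$, invoke Wood's bound $\qn(\mathbb{Z}^D_n)\le c\cdot 3^D$, and use subgraph-monotonicity of queue-number (which the paper leaves implicit). Nothing is missing.
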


 \section{Proof of \cref{l:PPPOverlap2}}\label{s:lem1}

The topological arguments in this paper exclusively involve finite polyhedral $2$-dimensional cell complexes, and so for brevity we refer to a finite polyhedral $2$-dimensional cell complex as simply a \defn{complex}\footnote{See \cite[Section ~12]{Bjorner95} for background on finite polyhedral cell complexes, which are referred to there as convex linear cell complexes.}. The proof of \cref{l:PPPOverlap2} relies on a Topological Overlap Theorem of Gromov~\cite{Gromov10}. To use it we need a technical variation of the overlap parameter. For a complex $X$ and topological space $Y$, define $$\overlap_{\triangle}(X,Y) = 
\min_{f \in C(X,Y)} \max_{p \in Y} |\{F \in X^{=2} \;|\; p \in f(F)\}|,$$
where $X^{=2}$ denotes the set of $2$-dimensional cells of $X$. We now state the $2$-dimensional case of Gromov's theorem.

\begin{thm}[{\cite[p.~419, Topological $\Delta$-inequality]{Gromov10}}]\label{t:Gromov} There exists $\alpha>0$ such that
	$$\overlap_{\triangle}(\Tr(K_n),\bb{R}^2) \geq \alpha n^3$$ 
for every integer $n \geq 3$.
\end{thm}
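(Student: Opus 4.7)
The plan is to follow Gromov's cascading argument, which combines coboundary expansion of the complete simplex with a geometric encoding of continuous maps into $\mathbb{R}^2$ via ray-crossings. Working with $\mathbb{Z}/2\mathbb{Z}$ coefficients, define the $k$-th coboundary expansion $\epsilon^k(X)$ of a complex $X$ as the infimum, over non-exact $k$-cochains $\alpha$, of $\|\delta\alpha\|/\mathrm{dist}(\alpha,B^k)$ in normalized counting norm. The key combinatorial input is the Linial--Meshulam--Wallach bound $\epsilon^k\geq 1$ for the full $k$-skeleton of the $(n{-}1)$-simplex, which I would need only for $k=0$ and $k=1$.

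After a small perturbation I may assume $f\colon\Tr(K_n)\to\mathbb{R}^2$ is piecewise linear and generic. For each generic point $p\in\mathbb{R}^2$, pick a ray $R_p$ from $p$ to infinity and define a $1$-cochain $\tau_p$ on $K_n$ by $\tau_p(uv)=|f(uv)\cap R_p|\bmod 2$. A Jordan-curve argument then gives $\delta\tau_p(F)=1$ precisely when $p\in f(F)$, so $\|\delta\tau_p\|$ equals the number of triangles of $K_n$ whose image under $f$ covers $p$. The goal is therefore a single $p$ with $\|\delta\tau_p\|\geq\alpha n^3$, i.e.\ covered by a constant fraction of the $\binom{n}{3}$ triangles.

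Now run the two-level cascade. Applying $\epsilon^1\geq 1$ to $\tau_p$, either $\|\delta\tau_p\|\gtrsim n^3$ (the desired overlap at $p$), or $\tau_p$ agrees with $\delta\alpha_p$ on all but a few edges for some $0$-cochain $\alpha_p$. In the latter case apply $\epsilon^0\geq 1$ to $\alpha_p$: either $\|\delta\alpha_p\|$ is large, forcing $\tau_p$ to have many $1$s (so $R_p$ crosses many edge-images), or $\alpha_p$ is close to constant, so $\tau_p$ is small and $R_p$ crosses few edge-images. Integrating the ``many crossings'' alternative over $p$ and over the direction of $R_p$ charges it to the total length of the edge-images of $f$, a quantity that, after division by $n^3$, tends to $0$. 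Hence a positive fraction of points $p$ must fall into the first alternative, which yields the theorem.

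The main obstacle is the quantitative bookkeeping that keeps every step of the cascade losing only an absolute constant rather than a factor depending on $n$. One must pick the ray direction randomly so that the total-length estimate averages favourably, select $\alpha_p$ as a measurable function of $p$ so the integrals make sense, and invoke Linial--Meshulam--Wallach in its sharp $n$-independent form. With this in place the cascade yields $\max_p\|\delta\tau_p\|\geq\alpha n^3$ for some absolute $\alpha>0$, which is the claimed lower bound on $\overlap_\triangle(\Tr(K_n),\mathbb{R}^2)$.
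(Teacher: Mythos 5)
The paper does not prove this statement: it is cited directly from Gromov's 2010 paper (the Topological $\Delta$-inequality) and used as a black box, so there is no internal proof to compare your attempt against.

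Your sketch does follow the general cofilling/cascade structure of Gromov's argument and subsequent expositions: perturb $f$ to PL general position, encode coverage via ray-crossing $1$-cochains $\tau_p$ with $\delta\tau_p(F)=1$ exactly when $p\in f(F)$, and feed these into the coboundary expansion of the complete complex. That much is the correct skeleton. The closing step as you describe it, however, has a genuine gap. You want to dispose of the ``many edge-crossings'' alternative by charging it to the total length of the edge-images of $f$ and asserting that this quantity, divided by $n^3$, tends to zero. No such bound exists: $f$ is an arbitrary continuous map (perturbed to PL), and for each fixed $n$ the total length of its edge-images can be made arbitrarily large, so the ratio does not tend to zero and the integral-geometric charge does not close. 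The actual argument cannot be finished by a length estimate at the $1$-skeleton level; one has to choose the cofillings $\alpha_p$ coherently in $p$ and push the dichotomy all the way down to the $0$-skeleton, where the image of the vertex set is a finite point set of measure zero. The quantitative bookkeeping you set aside is the essential content of the proof, not a technicality, and the outline as written does not establish the theorem.
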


We use \cref{t:Gromov} in combination with the following straightforward lemma to lower bound $\overlap(X,\bb{R}^2)$ for a complex $X$ in terms of $\overlap_{\triangle}(\Tr(K_n),X)$. 

\begin{lem}\label{l:factor}
For every complex $X_0$ and for all topological spaces $X$ and $Y$, 
	\[
	  \overlap(X,Y) \geq \frac{\overlap_{\triangle}(X_0,Y)}{\overlap_{\triangle}(X_0,X)}.
	\]
\end{lem}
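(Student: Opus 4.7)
The plan is to prove the inequality by composing near-optimal test maps for $\overlap_\triangle(X_0, X)$ and $\overlap(X, Y)$, and observing that the composition is a valid competitor for $\overlap_\triangle(X_0, Y)$. Fix arbitrary continuous maps $g \in C(X_0, X)$ and $f \in C(X, Y)$ and consider $f \circ g \in C(X_0, Y)$. For any $p \in Y$, a $2$-cell $F \in X_0^{=2}$ satisfies $p \in (f \circ g)(F)$ precisely when $g(F) \cap f^{-1}(p) \neq \emptyset$, so
\[
\{F \in X_0^{=2} : p \in (f\circ g)(F)\} \;=\; \bigcup_{q \in f^{-1}(p)} \{F \in X_0^{=2} : q \in g(F)\}.
\]
Taking cardinalities and applying the union bound yields
\[
|\{F \in X_0^{=2} : p \in (f\circ g)(F)\}| \;\le\; |f^{-1}(p)| \cdot \max_{q \in X} |\{F \in X_0^{=2} : q \in g(F)\}|.
\]

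Next I would take the maximum over $p \in Y$ on both sides. Because $f \circ g$ is itself a continuous map from $X_0$ to $Y$, the resulting left-hand maximum is an upper bound for $\overlap_\triangle(X_0, Y)$, giving
\[
\overlap_\triangle(X_0, Y) \;\le\; \Bigl(\max_{p \in Y} |f^{-1}(p)|\Bigr) \cdot \Bigl(\max_{q \in X} |\{F \in X_0^{=2} : q \in g(F)\}|\Bigr).
\]
The two factors on the right depend on $f$ and $g$ separately, so minimising each independently over its own function space gives $\overlap_\triangle(X_0, Y) \le \overlap(X, Y) \cdot \overlap_\triangle(X_0, X)$. Rearranging produces the claimed bound, which is vacuous in the degenerate case $X_0^{=2} = \emptyset$ and otherwise meaningful since $\overlap_\triangle(X_0, X) \ge 1$.

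The proof is essentially bookkeeping, and I do not expect any serious obstacle. The only point that deserves a moment of care is whether the minima in the definitions of $\overlap$ and $\overlap_\triangle$ are attained: for $\overlap_\triangle$ this is automatic, since the inner count is an integer in $\{0, \dots, |X_0^{=2}|\}$, and for $\overlap(X, Y)$ one can either work with $\inf$ in place of $\min$ or choose $\eps$-optimal maps and let $\eps \to 0$. Either way the argument goes through unchanged.
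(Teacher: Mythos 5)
Your proof is correct and follows essentially the same route as the paper's: compose a test map $g\colon X_0\to X$ with a test map $f\colon X\to Y$, observe that $f\circ g$ is a competitor for $\overlap_\triangle(X_0,Y)$, and bound the number of $2$-cells whose image under $f\circ g$ meets a point $p\in Y$ by a union bound over the fiber $f^{-1}(p)$. The only cosmetic difference is that you keep both $f$ and $g$ arbitrary and minimise at the end, whereas the paper first fixes an optimal $g$; the remarks you add about attainment of the minima and the degenerate case $X_0^{=2}=\emptyset$ are sensible but not needed for the paper's application.
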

\begin{proof}
	Let $f_0\colon X_0 \to X$ be a continuous function such that 
	$$\overlap_{\triangle}(X_0,X) = \max_{p \in X} |\{F \in X_0^{=2} | p \in f_0(F)\}|.$$
	Let $f\colon X \to Y$ be an arbitrary continuous function. Then $f_0 \circ f\colon X_0 \to Y$ is continuous and so there exists $p_0 \in Y$ such that $$|\{F \in X_0^{=2} \:|\: p_0 \in (f_0 \circ f)(F)\}| \geq \overlap_{\triangle}(X_0,Y).$$
	But 
	\begin{align*}|\{F \in X_0^{=2} \:|\: p_0 \in (f_0 \circ f)(F)\}| &\leq \sum_{p \in f^{-1}(p_0)}|\{F \in X_0^{=2} | p \in f_0(F)\}| \\&\leq | f^{-1}(p_0)| \cdot \overlap_{\triangle}(X_0,X).
	\end{align*}
	It follows that $$| f^{-1}(p_0)| \geq \frac{\overlap_{\triangle}(X_0,Y)}{\overlap_{\triangle}(X_0,X)},$$
	as desired. 
\end{proof}	

A family $\mc{B}$ of subcomplexes of a complex $X$ is a \defn{bramble over $X$}\footnote{This definition is inspired by the definition of a \emph{bramble} in a graph, which is used in graph minor theory. A bramble of a graph is only required to satisfy the first and second among the conditions we impose.} if: 
\begin{itemize}
	\item every $B \in \mc{B}$ is non-empty,
	\item $B_1 \cup B_2$ is connected for every pair of distinct $B_1,B_2 \in \mc{B}$,
	\item $B_1 \cup B_2 \cup B_3$ is simply connected for every triple of distinct $B_1,B_2, B_3 \in \mc{B}$.
\end{itemize}	

The \defn{congestion $\congestion(\mc{B})$} of a bramble $\mc{B}$ is the maximum size of a collection of elements in $\mc{B}$ that all share a point in common; that is, $$\congestion(\mc{B}) = \max_{p \in X} \:|\: \{B \in \mc{B} \:|\: p \in B\}|.$$ 

\begin{lem}\label{l:bramble} There exists $\beta>0$ such that for every complex $X$ and bramble $\mc{B}$ over $X$, 
	$$\overlap(X,\bb{R}^2) \geq \beta \frac{|\mc{B}|}{\congestion(\mc{B})}.$$
\end{lem}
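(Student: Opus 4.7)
The plan is to apply \cref{l:factor} with $X_0 = \Tr(K_n)$, where $n = |\mc{B}|$. Since $\overlap_{\triangle}(\Tr(K_n), \bb{R}^2) \geq \alpha n^3$ by \cref{t:Gromov}, it suffices to exhibit a continuous map $f_0 \colon \Tr(K_n) \to X$ witnessing $\overlap_{\triangle}(\Tr(K_n), X) \leq \tfrac{1}{2}\congestion(\mc{B})\cdot n^2$. Then \cref{l:factor} yields
$$\overlap(X, \bb{R}^2) \geq \frac{\alpha n^3}{\tfrac{1}{2}\congestion(\mc{B})\cdot n^2} = \frac{2\alpha |\mc{B}|}{\congestion(\mc{B})},$$
so $\beta = 2\alpha$ works, with the small cases $n \leq 2$ absorbed into the constant.

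Enumerate $\mc{B} = \{B_1, \dots, B_n\}$, identify $V(K_n)$ with $\{1,\dots,n\}$, and construct $f_0$ skeleton-by-skeleton, maintaining the invariant that for every simplex $\sigma$ with vertex set $S$, $f_0(\sigma) \subseteq \bigcup_{i \in S} B_i$. On 0-cells, pick an arbitrary point $x_i \in B_i$ (non-empty by assumption) and set $f_0(i) = x_i$. For each edge $\{i, j\}$, the union $B_i \cup B_j$ is connected, hence path-connected (as a polyhedral complex is locally path-connected), so extend $f_0$ along the edge by a path from $x_i$ to $x_j$ inside $B_i \cup B_j$. For each triangle $\{i,j,k\}$, the boundary loop already constructed lies in the simply connected union $B_i \cup B_j \cup B_k$, so it is null-homotopic there and $f_0$ extends continuously across the 2-cell with image in $B_i \cup B_j \cup B_k$. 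The one delicate step is this last one: one must invoke that a simply connected CW complex admits an extension of every continuous $S^1$-loop to a disk, which is exactly where the third bramble axiom is used.

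Finally, to bound the multiplicity of $f_0$, fix $p \in X$ and let $S_p = \{i : p \in B_i\}$, so $|S_p| \leq \congestion(\mc{B})$. A triangle $F = \{i, j, k\}$ with $p \in f_0(F)$ satisfies $p \in B_i \cup B_j \cup B_k$, hence $F \cap S_p \neq \emptyset$. The number of such triples is at most $|S_p|\binom{n-1}{2} \leq \tfrac{1}{2}\congestion(\mc{B}) \cdot n^2$, which gives the required bound on $\overlap_{\triangle}(\Tr(K_n), X)$ and completes the proof.
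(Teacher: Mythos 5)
Your proof is correct and follows essentially the same route as the paper's: apply Gromov's theorem together with Lemma~\ref{l:factor}, constructing the map $f_0\colon\Tr(K_n)\to X$ skeleton by skeleton using the three bramble axioms. The only difference is a slightly tighter counting of triangles through $p$ ($|S_p|\binom{n-1}{2}$ rather than the paper's $3|I|\binom{n}{2}$), which gives $\beta=2\alpha$ in place of $\tfrac{2}{3}\alpha$, and your explicit remark about absorbing $n\le 2$ into the constant; neither affects the substance of the argument.
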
	

\begin{proof}
	Let $\mc{B}=\{B_1,\ldots,B_n\}$. Let $X_0= \Tr(K_n)$ be the triangle complex of the complete graph with vertex set $\{v_1,\ldots,v_n\}$. We construct a continuous map $f: X_0 \to X$ as follows. Let $p_i \in B_i$ be chosen arbitrarily, and set $f(v_i)=p_i$ for every $i \in \{1,\dots,n\}$. Extend $f$ to the $1$-skeleton of $X_0$ by mapping each edge $v_i v_j$ to a path $\pi_{ij} \subseteq B_i \cup B_j$ from $p_i$ to $p_j$. 
	
	Finally, we need to continuously extend $f$ to the interior of every $2$-simplex $F_{ijk}$ of $X_0$ with vertices $v_i,v_j$ and $v_k$. Since $B_i \cup B_j \cup B_k$ is simply connected, and the boundary of $F_{ijk}$ is mapped to a closed curve in $B_i \cup B_j \cup B_k$, there is such an extension such that $f(F_{ijk}) \subseteq B_i \cup B_j \cup B_k$.
	
	Consider arbitrary $p \in X$, and let $I = \{i \in \{1,\dots,n\} \:|\: p \in B_i\}$. Then $|I| \leq \congestion(\mc{B})$ and $$|\{F \in X^{=2} \:|\: p \in f(F)\}| \leq |\{\{i,j,k\} \subseteq \{1,\dots,n\} \:| \: \{i,j,k\} \cap I \neq \emptyset \}| \leq 3|I|\binom{n}{2}.$$
	It follows that $\overlap_{\triangle}(X_0,X) \leq \frac{3}{2}\congestion(\mc{B})n^2.$
	Thus by \cref{t:Gromov} and \cref{l:factor}, 
	$$\overlap(X,\bb{R}^2) \geq \frac{\overlap_{\triangle}(X_0,\bb{R}^2)}{\overlap_{\triangle}(X_0,X)} \geq \frac{\alpha n^3}{\frac{3}{2}\congestion(\mc{B})n^2},$$ where $\alpha$ is from \cref{t:Gromov}.
	It follows that $\beta= \frac{2}{3}\alpha$ satisfies the lemma.
\end{proof}

\cref{l:bramble} allows one to lower bound $\overlap(X,\bb{R}^2)$ by exhibiting a bramble with size large in comparison to its congestion. To simplify the verification of the conditions in the definition of a bramble we use a simple consequence of van Kampen's theorem.\footnote{See \cite[Theorem 1.20]{Hatcher02} for the general statement of van Kampen's theorem, or \cite[Section 2.1.3]{Camarena} for an example of the application of the theorem in the setting generalizing \cref{l:vanKampen}.} 

\begin{lem}\label{l:vanKampen} 
	Let $B_1,B_2$ be subcomplexes of a complex $X$. If $B_1$ and $B_2$ are simply connected, and $B_1 \cap B_2$ is non-empty and connected, then $B_1 \cup B_2$ is simply connected. 
\end{lem}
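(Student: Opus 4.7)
The proof is a direct application of van Kampen's theorem, with the only subtlety being the passage from closed subcomplexes to the open sets required by the classical statement. I will proceed in three steps.

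First, I would check that $B_1\cup B_2$ is path-connected. Since $B_1$ and $B_2$ are each simply connected, they are in particular path-connected, and by hypothesis $B_1\cap B_2$ contains some point $p$. For any two points $x\in B_1$ and $y\in B_2$, concatenating a path from $x$ to $p$ inside $B_1$ with a path from $p$ to $y$ inside $B_2$ produces a path in $B_1\cup B_2$, which settles path-connectedness.

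Next I would set up the open cover required by van Kampen's theorem. A subcomplex of a finite polyhedral $2$-complex admits an open regular neighbourhood (the union of the open stars of its cells in a sufficiently fine subdivision) that deformation retracts onto it. So I would choose open neighbourhoods $U_1\supseteq B_1$ and $U_2\supseteq B_2$ in $X$ such that (i) $U_i$ deformation retracts to $B_i$, (ii) $U_1\cap U_2$ deformation retracts to $B_1\cap B_2$, and (iii) $U_1\cup U_2$ deformation retracts to $B_1\cup B_2$. Each $U_i$ is then open, path-connected and simply connected, and $U_1\cap U_2$ is open and path-connected (since $B_1\cap B_2$ is a non-empty connected subcomplex and is therefore path-connected).

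Finally, van Kampen's theorem applied to the cover $U_1\cup U_2$ identifies $\pi_1(U_1\cup U_2)$ with the pushout of $\pi_1(U_1)$ and $\pi_1(U_2)$ over $\pi_1(U_1\cap U_2)$. Since $\pi_1(U_1)$ and $\pi_1(U_2)$ are both trivial, so is the pushout, giving $\pi_1(U_1\cup U_2)=0$; by the deformation retraction, $\pi_1(B_1\cup B_2)=0$ as well. Combined with step one, this shows $B_1\cup B_2$ is simply connected.

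The main obstacle is purely a bookkeeping one: carefully producing the open neighbourhoods $U_1,U_2$ whose pairwise and triple-wise intersections retract onto the corresponding subcomplexes so that van Kampen's theorem applies cleanly. In the polyhedral setting this is standard (regular neighbourhoods in a barycentric subdivision), so the argument is routine once the setup is in place.
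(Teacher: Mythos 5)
The paper itself does not actually prove this lemma: it simply states it as a ``simple consequence of van Kampen's theorem'' and points the reader to Hatcher~\cite[Theorem 1.20]{Hatcher02} and to Camarena's notes for the general machinery. Your argument is a correct and fairly standard filling-in of the details that those references supply. In particular, you correctly identify the only real issue — that the classical van Kampen theorem requires an \emph{open} cover, while $B_1$ and $B_2$ are closed subcomplexes — and you address it with the standard regular-neighbourhood device: take open stars in a (second) barycentric subdivision, which deformation retract onto the given subcomplexes, and which are compatible in the sense that the intersection of the neighbourhoods deformation retracts onto $B_1 \cap B_2$. One small point worth making explicit is precisely this compatibility: you need the neighbourhoods $U_1, U_2$ to be constructed in a single subdivision fine enough that all three retractions (onto $B_1$, $B_2$, and $B_1 \cap B_2$) hold simultaneously; in the polyhedral setting this is routine (take the full subcomplexes in the second barycentric subdivision and their open stars), but a careful writeup should say so rather than assert the existence of the $U_i$ separately. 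With that caveat noted, your proof is correct and in the same spirit as the references the paper invokes.
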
	

\begin{cor}\label{c:bramble} 
	Let $X$ be a complex. Let $\mc{B}$ be a family of subcomplexes of $X$ such that:
	\begin{itemize}
		\item every $B \in \mc{B}$ is simply connected, 
		\item $B_1 \cap B_2$ is connected for every pair of distinct $B_1,B_2 \in \mc{B}$, 
		\item $B_1 \cap B_2 \cap B_3$ is non-empty for every triple of distinct $B_1,B_2, B_3 \in \mc{B}$.
	\end{itemize}
Then $\mc{B}$ is a bramble over $X$.
\end{cor}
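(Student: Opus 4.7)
The plan is to verify the three defining conditions of a bramble one by one, using the hypotheses together with \cref{l:vanKampen}. The first two conditions are essentially immediate; the work is all in the third.

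First I would dispose of the non-emptiness condition: simply connected subcomplexes are non-empty by convention, so each $B \in \mc{B}$ is non-empty. For the connectedness of $B_1 \cup B_2$ with $B_1 \neq B_2$, I would observe that $B_1$ and $B_2$ are each connected (being simply connected) and that $B_1 \cap B_2$ is connected, hence non-empty, so $B_1 \cup B_2$ is a union of two connected sets sharing a point and is therefore connected.

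The main step is showing that $B_1 \cup B_2 \cup B_3$ is simply connected for any three distinct $B_1, B_2, B_3 \in \mc{B}$. I would apply \cref{l:vanKampen} twice. First, since $B_1$ and $B_2$ are simply connected and $B_1 \cap B_2$ is non-empty and connected, \cref{l:vanKampen} gives that $B_1 \cup B_2$ is simply connected. Then I would apply \cref{l:vanKampen} again to $B_1 \cup B_2$ and $B_3$, both of which are simply connected (by the previous step and by hypothesis respectively). This requires the intersection $(B_1 \cup B_2) \cap B_3 = (B_1 \cap B_3) \cup (B_2 \cap B_3)$ to be non-empty and connected.

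This last verification is the only slightly delicate point. By hypothesis, both $B_1 \cap B_3$ and $B_2 \cap B_3$ are connected, and their intersection $B_1 \cap B_2 \cap B_3$ is non-empty by the third assumption of the corollary. Hence $(B_1 \cap B_3) \cup (B_2 \cap B_3)$ is a union of two connected sets sharing a point, so it is connected and non-empty. A final application of \cref{l:vanKampen} then yields that $B_1 \cup B_2 \cup B_3$ is simply connected, completing the verification that $\mc{B}$ is a bramble.
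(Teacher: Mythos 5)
Your proof is correct and follows essentially the same route as the paper's: two successive applications of \cref{l:vanKampen}, with the key observation that $(B_1 \cap B_3) \cup (B_2 \cap B_3)$ is connected because the two pieces are connected and share the common point guaranteed by the third hypothesis. The only cosmetic difference is in the second bramble condition, where you argue connectedness of $B_1 \cup B_2$ directly from two connected sets meeting in a point, whereas the paper notes it is even simply connected by a first application of \cref{l:vanKampen}; both are equivalent in substance.
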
	

\begin{proof}
	The first condition in the definition of a bramble trivially holds. 
	The second condition holds since $B_1 \cup B_2$ is simply connected for all distinct $B_1,B_2 \in \mc{B}$ by \cref{l:vanKampen}.
	
	It remains to show that $B_1 \cup B_2 \cup B_3$ is simply connected for all distinct $B_1,B_2, B_3 \in \mc{B}$. Since $B_1 \cup B_2$ and $B_3$ are simply connected, this follows from \cref{l:vanKampen} as long as $B_3 \cap (B_1 \cup B_2) = (B_3 \cap B_1) \cup (B_3 \cap B_2)$ is non-empty and connected. By the assumptions of the corollary, each of $B_3\cap B_1$ and $B_3 \cap B_2$ is connected and they share a point in common. Thus their union is connected as desired.
\end{proof}

We are now ready to prove \cref{l:PPPOverlap2}.

\begin{proof}[Proof of \cref{l:PPPOverlap2}.] 
Let $G$ be any triangulation of $T_1 \boxempty T_2 \boxempty T_3$, where $T_1$, $T_2$ and $T_3$ are $n$-vertex trees. To simplify our notation we assume that $T_1,T_2$ and $T_3$ are vertex-disjoint. We denote by $\widehat{T}_i$ the geometric $1$-dimensional complex corresponding to $T_i$, to avoid confusion between discrete and topological objects.
	
For $u \in V(T_1)$, let $G(u)$ be the subgraph of $G$ induced by all the vertices of $G$ with the first coordinate $u$. That is, $V(G(u))=\{(u,u_2,u_3) \: | \:u_2 \in V(T_2),u_3 \in V(T_3)\}$. So $G(u)$ is isomorphic to a triangulation of $T_2 \boxempty T_3$. Let $X(u) = \Tr(G(u))$. As a topological space, $X(u)$ is homeomorphic to $\widehat{T}_2 \times \widehat{T}_3$. Define $G(u)$ and $X(u)$ for $u \in V(T_2) \cup V(T_3)$ analogously. 
		
For $(u_1,u_2,u_3) \in V(G)$, let $B(u_1,u_2,u_3)=X(u_1) \cup X(u_2) \cup X(u_3)$, and let \linebreak $\mc{B}= \{B(u_1,u_2,u_3) \: | \: (u_1,u_2,u_3) \in V(G) \}$. 
We claim that $\mc{B}$ is a bramble over $\brm{Tr(G)}$. It suffices to check that it satisfies the conditions of \cref{c:bramble}.
	
To verify the first condition for $ \mc{B} $, we use \cref{c:bramble} to show that $\{X(u_1),X(u_2),X(u_3)\}$ is a bramble over $\brm{Tr(G)}$ for every $(u_1,u_2,u_3) \in V(G)$. Note that each $\widehat{T}_i$ is simply connected. As the product of simply connected spaces is simply connected, it follows that $X(u)$ is simply connected for every $u \in V(T_1) \cup V(T_2) \cup V(T_3)$. Consider now $(u_1,u_2,u_3) \in V(G)$. Then $X(u_1) \cap X(u_2)$ is homeomorphic to $\widehat{T}_3$ and is connected. Similarly, $X(u_1) \cap X(u_3)$ and $X(u_2) \cap X(u_3)$ are connected. Finally, $X(u_1) \cap X(u_2) \cap X(u_3)$ consists of a single point (and is connected). By \cref{c:bramble} the set $\{X(u_1),X(u_2),X(u_3)\}$ is a bramble. In particular, $B(u_1,u_2,u_3)=X(u_1) \cup X(u_2) \cup X(u_3)$ is simply connected. Thus the first condition in  \cref{c:bramble} for $ \mc{B} $ holds.

For the second condition, consider distinct $B(u_1,u_2,u_3), B(v_1,v_2,v_3) \in \mc{B}$. Let $R = B(u_1,u_2,u_3) \cap B(u_1,u_2,u_3)$ for brevity. Assume first, for simplicity, that $u_i \neq v_i$ for $i \in \{1,2,3 \}$. Then 
\[R = \bigcup_{\substack{i,j\in\{1,2,3\} \\ i \neq j}} X(u_i) \cap X(v_j).\]
Each set $X(u_i) \cap X(v_j)$ in this decomposition is connected, since it is homeomorphic to $\widehat{T}_k$ for $\{k\} = \{1,2,3\} \setminus \{i,j\}$. Moreover, when ordering these sets, 
\begin{align*}  
X(u_1) \cap X(v_2), \; & X(u_3) \cap X(v_2),\; X(u_3) \cap X(v_1),\\
& X(u_2) \cap X(v_1),\; X(u_2) \cap X(v_3),\; X(u_1) \cap X(v_3), 
\end{align*} 
each pair of consecutive sets share a point, for example, $(u_1,v_2,u_3) \in (X(u_1) \cap X(v_2)) \cap (X(u_3) \cap X(v_2)).$ It follows that their union is connected.

It remains to consider the case $u_i= v_i$ for some $i \in \{1,2,3 \}$. Assume $i=1$ without loss of generality. If $u_2 \neq v_2$ and $u_3 \neq v_3$ then $R = X(u_1) \cup (X(u_2) \cap X(v_3)) \cup (X(v_2) \cap X(u_3)).$ Each of the sets in this decomposition is again connected and the second and third sets intersect the first, implying that their union is connected. Finally, if say $u_2=v_2$ then $R= X(u_1) \cup X(u_2)$ and is again a union of connected intersecting sets. This verifies the second condition of  \cref{c:bramble} for $ \mc{B} $. 

For the last condition, for any $B(u_1,u_2,u_3), B(v_1,v_2,v_3),B(w_1,w_2,w_3) \in \mc{B}$,
$$ (u_1,v_2,w_3) \in B(u_1,u_2,u_3) \cap B(v_1,v_2,v_3) \cap B(w_1,w_2,w_3),$$
and so $B(u_1,u_2,u_3) \cap B(v_1,v_2,v_3) \cap B(w_1,w_2,w_3) \neq \emptyset$. 
By \cref{c:bramble}, $\mc{B}$ is a bramble.

As noted above, $|\mc{B}|=n^3$. Moreover, for every $p \in \Tr(G)$ and every $i \in \{1,2,3\}$ there exists at most one $v_i \in V(T_i)$ such that $p \in X(v_i)$. Thus $\congestion(\mc{B}) \leq 3n^2$, and by \cref{l:bramble},
	$$\overlap(\Tr(G), \bb{R}^2) \geq \beta \frac{|\mc{B}|}{\congestion(\mc{B})} \geq \frac{\beta}{3}n,$$
	as desired.
\end{proof}	

Note that $\Tr(P_n\boxtimes P_n \boxtimes P_n)$ has a natural affine embedding into $\bb{R}^3$ with vertices mapped to points in $\{1,\dots,n\}^{3}$. Since any line in a direction sufficiently close to an axis direction intersects this complex in $n+O(1)$ points, projecting along such a direction we obtain an affine map $\Tr(G) \to \bb{R}^2$ that  covers every point at most $n+O(1)$ times. Thus $\overlap(\Tr(P_n \boxtimes P_n  \boxtimes P_n), \bb{R}^2) \leq n+O(1)$ and so the bound in \cref{l:PPPOverlap2} can not be substantially improved.

%%%%%%%%%%%%%%%%%%%%%%%%%%%%%%%%%%%%%%%%%%%%%%%%%%%%%%%%%%%%%%%%
\section{Proof of \cref{l:SNvsOverlap}}\label{s:lem2}

The proof of \cref{l:SNvsOverlap} depends on the following result.

\begin{lem}
	\label{CrossingsDisjointTriangles}
	Let $T_1,\dots,T_m$ be pairwise vertex-disjoint pairwise intersecting triangles in $\mathbb{R}^2$ with all the vertices on a circle $S$. Assume that the edges of $T_1,\dots,T_m$ can be partitioned into $k$ non-crossing sets. Then $m\leq k^3$.
\end{lem}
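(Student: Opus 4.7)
Here is the plan. Fix an orientation of $\mathbb{R}^2$ and a point $p_0\in S$ that is not a vertex of any $T_i$; this determines a canonical cyclic ordering of the points of $S$ starting from $p_0$. For each triangle $T_i$, let $a^i_1, a^i_2, a^i_3$ denote its vertices in this cyclic order and assign the colour triple
\[
\phi(T_i) \;:=\; \bigl(c(a^i_1 a^i_2),\, c(a^i_2 a^i_3),\, c(a^i_1 a^i_3)\bigr) \in \{1,\ldots,k\}^3,
\]
where $c(e)$ is the index of the non-crossing class containing $e$. Since the codomain has size $k^3$, if $m > k^3$ then by pigeonhole some two distinct triangles $T_i, T_j$ satisfy $\phi(T_i)=\phi(T_j)$. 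Writing $a_s := a^i_s$ and $b_s := a^j_s$, the goal will be to derive a contradiction by showing that $T_i$ and $T_j$ must be disjoint as planar regions.

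Because the three \emph{matched} pairs of edges $(a_1 a_2, b_1 b_2)$, $(a_2 a_3, b_2 b_3)$, $(a_1 a_3, b_1 b_3)$ share colours, none of them crosses; equivalently, for each pair $\{s,t\}$ the points $b_s, b_t$ lie on the same side of the chord $a_s a_t$. Let $I_1, I_2, I_3$ be the three open arcs of $S$ determined by $\{a_1, a_2, a_3\}$, where $I_r$ is opposite $a_r$, and let $r_k\in\{1,2,3\}$ denote the index of the arc containing $b_k$. The three same-side conditions then translate to the Boolean identities $[r_1{=}3]=[r_2{=}3]$, $[r_2{=}1]=[r_3{=}1]$, $[r_1{=}2]=[r_3{=}2]$, whose only solutions among the $27$ triples $(r_1,r_2,r_3)$ are the three constant triples and $(1,2,3)$.

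The cyclic labelling then rules out $(1,2,3)$: by construction $p_0\in I_2$ (since $p_0$ lies between $a_3$ and $a_1$ in the cyclic order), so the points of $S$ encountered from $p_0$ traverse the arcs in the order $I_2, I_3, I_1, I_2$; placing $b_k\in I_k$ therefore makes either $b_2$ or $b_3$ precede $b_1$, contradicting the convention $b_1\prec b_2\prec b_3$. Hence $r_1=r_2=r_3=r$, so all three $b_k$ lie in a single arc $I_r$ of $T_i$, and the chord of $T_i$ opposite $a_r$ strictly separates $T_j$ from $T_i$. This gives $T_i\cap T_j=\emptyset$, the desired contradiction.

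The main obstacle is the structural claim that non-crossing of the three matched pairs forces the two triangles to be disjoint; the approach above reduces this to elementary Boolean bookkeeping plus one cyclic-consistency check. An equivalent but more visual route is to enumerate directly the $\binom{6}{3}=20$ interleavings of the six vertices along $S$, observe that exactly six of them correspond to disjoint triangles (those in which the vertices of one triangle occupy a consecutive arc), and verify that in each of the remaining fourteen interleavings at least one matched pair of chords crosses.
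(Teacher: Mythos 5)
Your proof is correct and takes essentially the same approach as the paper: encode each triangle by the colour triple of its three edges, apply the pigeonhole principle, and show that two triangles with identical colour triples must be disjoint, contradicting the pairwise-intersecting hypothesis. Your Boolean bookkeeping via arc-membership indices $(r_1,r_2,r_3)$ is a somewhat more systematic way of organizing the same case analysis that the paper carries out directly on the vertex orderings $a_i<b_i<c_i$.
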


\begin{proof}
	Number the vertices of $T_1,\dots,T_m$ by $1,\dots,3m$ in clockwise order starting at an arbitrary point on $S$. By assumption there is a function $\phi\colon \bigcup_i E(T_i) \to \{1,\dots,k\}$ such that $\phi(e_1)\neq\phi(e_2)$ for all crossing edges $e_1,e_2\in\bigcup_i E(T_i)$. Say the vertices of $T_i$ are $(a_i,b_i,c_i)$ with $a_i<b_i<c_i$. Define the function $f \colon \{1,\dots,m\}\to\{1,\dots,k\}^3$ by $f(i)=(\phi(a_ib_i),\phi(a_ic_i),\phi(b_ic_i))$. Suppose that $f(i)=f(j)$ for distinct $i,j\in\{1,\dots,m\}$. Thus $a_ib_i$ does not cross $a_jb_j$, and $a_ic_i$ does not cross $a_jc_j$, and
	$b_ic_i$ does not cross $b_jc_j$. Without loss of generality, $a_i<a_j$. 
	If $c_i<c_j$ then $a_ic_i$ crosses $a_jc_j$, so $a_i<a_j<b_j<c_j<c_i$. 
	Now consider $b_i$.
	If $a_i<b_i<a_j$ or $c_j<b_i<c_i$, then $T_i$ and $T_j$ do not intersect.
	So $a_j<b_i<c_j$. 
	If $a_j<b_i<b_j$, then $a_ib_i$ crosses $a_jb_j$.
	Otherwise, $b_j<b_i<c_j$ implying $b_jc_j$ crosses $a_jc_j$. 
	This contradiction shows that 
	$f(i)\neq f(j)$ for distinct $i,j\in\{1,\dots,m\}$. Hence $m\leq k^3$. 
\end{proof}

\begin{proof}[Proof of \cref{l:SNvsOverlap}]
	Let $k=\sn(G)$, and let $(v_1,\ldots,v_n)$ together with a function $\phi \colon E(G) \to \{1,\dots,k\}$ be a $k$-stack layout of $G$. Let $p_1, p_2,\ldots, p_n$ be pairwise distinct points chosen on a circle $S$ in  $\bb{R}^2$, numbered in cyclic order around $S$. Let $X = \Tr(G)$. Define a continuous function $f\colon X \to \bb{R}^2$ by setting $f(v_i)=p_i$ for all $i \in \{1,\dots,n\}$, extending $f$ affinely to $2$-simplices of $X$ (triangles of $G$), and, for simplicity, mapping every edge of $G$ that does not belong to a triangle continuously to a curve internally disjoint from a circle bounded by $S$, so that every point of $\bb{R}^2$ belongs to at most two such curves.
	
	Let $m'= \overlap(X,\bb{R}^2)$. Then there exists $p \in \bb{R}^2$ such that $|f^{-1}(p)| \geq m'$. If $m' \leq 2$ the lemma trivially holds, and so we assume $m'>2$. Since the restriction of $f$ to each simplex of $X$ is injective, there exist triangles $T_1,T_2,\ldots, T_{m'} \subseteq \bb{R}^2$ corresponding to images of distinct $2$-simplices of $X$ so that $p \in \bigcap_{i=1}^{m'}T_i$. Let $m = \lceil \frac{m'}{3c-2}\rceil > \frac{m'}{3c}$. Since every triangle shares a vertex with at most $3c-3$ others, we may assume that $T_1,T_2,\ldots, T_{m}$ are pairwise vertex-disjoint by greedily selecting vertex-disjoint triangles among $T_1,T_2,\ldots, T_{m'}$. By \cref{CrossingsDisjointTriangles}, $m \leq k^3$. Thus
	\begin{equation*}
	\sn(G)=k \geq m^{1/3} \geq \fs{m'}{3c}^{1/3} = \s{\frac{\overlap(X,\bb{R}^2)}{3c}}^{1/3}.\qedhere 
	\end{equation*}
\end{proof}

This lemma completes the proof of \cref{t:StrongProductPaths,t:TreeTriangulation}.

\medskip
We finish this section by showing that \cref{CrossingsDisjointTriangles} is best possible up to a constant factor.

\begin{prop}
	For infinitely many $k\in\mathbb{N}$, there is a set of $(\frac{k}{3})^3$ pairwise intersecting and pairwise vertex-disjoint triangles with vertices on a circle in $\mathbb{R}^2$, such that the edges of the triangles can be $k$-coloured with crossing edges assigned distinct colours. 
\end{prop}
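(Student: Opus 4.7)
The plan is that for each $s \in \mathbb{N}$, setting $k = 3s$ yields $(k/3)^3 = s^3$. Place $3s^3$ points on a circle, partitioned into three consecutive arcs $A$, $B$, $C$ of size $s^3$, and identify each arc's points with $\{1,\ldots,s\}^3$ via orderings to be chosen. For each triple $(x,y,z)$, take the triangle $T_{x,y,z}$ with vertices $a_{x,y,z} \in A$, $b_{x,y,z} \in B$, $c_{x,y,z} \in C$, and define the $k$-colouring by $\phi(a_{x,y,z}b_{x,y,z}) = x$, $\phi(a_{x,y,z}c_{x,y,z}) = s+y$ and $\phi(b_{x,y,z}c_{x,y,z}) = 2s+z$, using three disjoint palettes of $s$ colours each. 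Vertex-disjointness is immediate, and pairwise intersection follows once I observe that the six vertices of any two such triangles appear in cyclic order of the shape $(A,A,B,B,C,C)$; no such arrangement can place the three vertices of one triangle consecutively, so the two vertex sets must interleave and the triangles cross (and therefore intersect, since neither triangle can contain the other as both are inscribed in the same circle).

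The main step is to choose the three arc orderings so that chords of the same colour never cross. Observing that two chords with endpoints in $A$ and $B$ are non-crossing if and only if the induced bijection is order-reversing, the $x$-colour class is crossing-free precisely when, for every fixed $x$, the $A$- and $B$-orderings restricted to the $x$-slice are reverses of one another; analogous slice-reversal conditions must hold for $A$ versus $C$ within each $y$-slice and for $B$ versus $C$ within each $z$-slice. I would realise all three conditions at once by ordering $A$, $B$, $C$ lexicographically by the triples $(x,y,z)$, $(x,-y,-z)$ and $(-x,y,-z)$ respectively. A direct check then shows that for each of the three pairs of arcs and the corresponding slice, the orderings restricted to that slice are reverse-lexicographic on the remaining two coordinates, so all three slice-reversal conditions are verified and the construction succeeds.

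The main conceptual obstacle is the simultaneous compatibility of these three reversal conditions; naively one might worry that enforcing reversal on $A$ vs.\ $B$ inside every $x$-slice could interfere with the reversals required on the other two pairs of arcs. The key observation that eliminates this obstacle is that the three sign patterns $(+,+,+)$, $(+,-,-)$ and $(-,+,-)$ have the property that each pair of patterns agrees on exactly one coordinate, and the coordinate on which any two patterns agree is precisely the index of the slice relevant to that pair of arcs. This orthogonality makes the three slice-reversal conditions independent, so all three can be satisfied by independently chosen lexicographic orderings, giving $s^3 = (k/3)^3$ triangles with the required properties for every $k = 3s$.
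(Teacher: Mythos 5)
Your proof is correct, and it takes a genuinely different route from the paper's. The paper proves the proposition via \cref{InductiveTriangles}, an inductive doubling construction: starting from a single triangle, each step replaces every triangle by eight triangles (placed according to a specific pattern, $u_1v_4w_6$, $u_2v_3w_5$, etc.) and doubles each of the three colour palettes, yielding $8^\ell$ triangles with $3\cdot 2^\ell$ colours. You instead give a closed-form construction for every $s$: identify the points of each arc with $\{1,\ldots,s\}^3$ under lexicographic orderings twisted by the sign vectors $(+,+,+)$, $(+,-,-)$, $(-,+,-)$, and colour each chord type by the unique coordinate on which the two relevant sign vectors agree. The combinatorial core is the same in both proofs --- reduce non-crossing of same-coloured chords between two arcs to the map between endpoints being order-reversing, and arrange the three pairwise reversal conditions to hold simultaneously --- but your sign-pattern formulation makes the simultaneity transparent (agree on the slice coordinate, reverse on the other two), avoids the induction entirely, and works for all $k$ divisible by $3$ rather than only for $k=3\cdot 2^\ell$. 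The step you leave most compressed is that no cyclic arrangement of shape $(A,A,B,B,C,C)$ can place one triangle's three vertices consecutively; this is fine, since each of the three intra-arc pairs forces a label change, giving at least four maximal runs in the cyclic label sequence, whereas separability would require exactly two.
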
 

This result is implied by the following (with $k=3\cdot2^\ell$).

\begin{lem}
	\label{InductiveTriangles}
	Let $S$ be a circle in $\mathbb{R}^2$ partitioned into three pairwise disjoint arcs $A,B,C$. For every $\ell\in \bb{N}_0$ there is a set $\mathcal{T}$ of $8^\ell$ triangles, each with one vertex in each of $A,B,C$, such that the $AB$-edges can be $2^\ell$-coloured with crossing edges assigned distinct colours, the $BC$-edges can be $2^\ell$-coloured with crossing edges assigned distinct colours, and the $CA$-edges can be $2^\ell$-coloured with crossing edges assigned distinct colours. 
\end{lem}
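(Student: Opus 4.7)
The plan is to prove the lemma by induction on $\ell$, with the base case $\ell=0$ given by a single triangle with one vertex in each arc, each of its three edges receiving the unique colour~$1$.

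For the inductive step, I would split each arc into two consecutive sub-arcs $A=A^0\cup A^1$, $B=B^0\cup B^1$, $C=C^0\cup C^1$ (listed in cyclic order on $S$), and for each $(\alpha,\beta,\gamma)\in\{0,1\}^3$ place a scaled-down copy $\mc{T}_\ell^{(\alpha,\beta,\gamma)}$ of the family supplied by the induction so that it sits inside $A^\alpha,B^\beta,C^\gamma$. This produces $8\cdot 8^\ell=8^{\ell+1}$ triangles, pairwise vertex-disjoint by construction and pairwise intersecting: if two triangles each having a vertex on every arc were disjoint, then one would be contained in a single lune of the other, but every lune of such a triangle meets only two of the three arcs, contradicting that the trapped triangle has a vertex on the third.

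The colour assigned to an edge of a triangle in $\mc{T}_\ell^{(\alpha,\beta,\gamma)}$ will be a pair $(c,s)\in[2^\ell]\times\{0,1\}$, where $c$ is the colour given by the inductive $\mc{T}_\ell$-colouring to the corresponding edge, and the \emph{shade} $s$ is $\alpha$ for an $AB$-edge, $\beta$ for a $BC$-edge, and $\gamma$ for a $CA$-edge. This uses exactly $2^{\ell+1}$ colours per edge type. Crossings between two edges inside a single copy are already resolved by $c$. Crossings between same-type edges in two different copies are resolved by $s$ whenever the relevant shade coordinate differs between the two octants; the remaining ``same-shade'' case must be excluded by the geometric placement.

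To achieve that, I would order the eight blocks within each sub-arc according to the following cyclically symmetric rule, keeping each block's internal vertex order as dictated by $\mc{T}_\ell$:
\begin{align*}
A^a&\colon\ (a,1,0),\ (a,1,1),\ (a,0,0),\ (a,0,1),\\
B^b&\colon\ (0,b,1),\ (1,b,1),\ (0,b,0),\ (1,b,0),\\
C^c&\colon\ (1,0,c),\ (1,1,c),\ (0,0,c),\ (0,1,c).
\end{align*}
The hard part will be a case analysis over the six possible patterns of coordinates agreeing/differing between two octants, verifying that whenever two same-type edges share a shade they are nested rather than interleaved. The observation that makes this feasible is that the ``single-coordinate'' constraints---two copies share a shade and differ in the coordinate routing the edge to a different sub-arc---affect three different sub-arc types: $A^a$ is constrained only by $AB$, $B^b$ only by $BC$, and $C^c$ only by $CA$, so the natural ``put-the-$1$-first'' orderings do not collide. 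The remaining constraints (two copies agreeing in both shade-relevant coordinates) require reversed block orders between the appropriate pair of sub-arcs, and the displayed orderings are designed precisely so that every such reversal holds. I expect the bulk of the formal write-up to be the routine verification of these six pair patterns.
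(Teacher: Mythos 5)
Your plan is correct, and it takes a genuinely different route from the paper's. The paper also inducts on $\ell$, but its step is a \emph{local} expansion: each triangle $uvw$ in the level-$\ell$ family is replaced by eight new vertices $u_1,\dots,u_8$ (and similarly for $v,w$) and eight interlocking triangles $u_1v_4w_6,u_2v_3w_5,\dots,u_8v_5w_3$, and each colour is split into two new colours according to a halving of the eight triangles chosen differently for each edge type. Your construction is \emph{global}: the entire level-$\ell$ family is duplicated into eight copies placed in the eight octants obtained by halving $A$, $B$, $C$, and the octant's shade bit is appended to the inductive colour. The work then shifts to choosing the relative order of the four blocks within each of the six sub-arcs so that same-shade, same-inductive-colour edges nest rather than cross; I went through all the same-shade cases for each edge type, and the orderings you display do satisfy every required reversal. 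One small inaccuracy in your feasibility remark: the block order inside $A^a$ is in fact constrained by both $AB$- and $CA$-edges (only the single-coordinate ``routing'' constraint you highlight comes from $AB$ alone), and likewise for $B^b$ and $C^c$, but these combined constraints are mutually compatible and your orderings satisfy them. Conceptually the global duplication is arguably cleaner, since the eight copies are structurally identical and the shade is simply one octant coordinate per edge type, at the cost of the block-ordering analysis; the paper's local expansion concentrates the combinatorics into a single replaced triangle and verifies the colouring directly from a figure. Both constructions also yield pairwise vertex-disjoint, pairwise intersecting families, as needed for the ensuing proposition.
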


\begin{proof}
	We proceed by induction on $\ell$. The claim is trivial with $\ell=0$. Assume that for some integer $\ell\geq 0$, there is a set $\mathcal{T}$ of $8^\ell$ triangles satisfying the claim. 
	Let $X$ be the set of $2^\ell$ colours used for the $AB$-edges. 
	Let $Y$ be the set of $2^\ell$ colours used for the $BC$-edges. 
	Let $Z$ be the set of $2^\ell$ colours used for the $CA$-edges. 
	Let $X'=\{x',x'':x\in X\}$ be a set of $2^{\ell+1}$ colours. 
	Let $Y'=\{y',y'':y\in Y\}$ be a set of $2^{\ell+1}$ colours. 
	Let $Z'=\{z',z'':z\in Z\}$ be a set of $2^{\ell+1}$ colours. 
	Let $\mathcal{T}'$ be the set of triangles obtained by replacing each $T\in \mathcal{T}$ by 8 triangles as follows. 
	Say the vertices of $T$ are $u,v,w$ where $u\in A$, $v\in B$ and $w\in C$. 
	Replace $u$ by $u_1,\dots,u_8$ in clockwise order in $A$, 
	replace $v$ by $v_1,\dots,v_8$ in clockwise order in $B$, and 
	replace $w$ by $w_1,\dots,w_8$ in clockwise order in $C$. 
	By this we mean that if $p,q$ are consecutive vertices in the original ordering, then 
	$p_1,\dots,p_8,q_1,\dots,q_8$ are consecutive vertices in the enlarged ordering. 
	Add the triangles $u_1v_4w_6,u_2v_3w_5,u_3v_2w_8,u_4v_1w_7,u_5v_8w_2,u_6v_7w_1,u_7v_6w_4,u_8v_5w_3$ to $\mathcal{T}'$. 
	Say $uv$ is coloured $x\in X$, $vw$ is coloured $y\in Y$, and $wv$ is coloured $z\in Z$. 
	Colour each of $u_1v_4,u_2v_3,u_3v_2,u_4v_1$ by $x'\in X'$, and colour each of $u_5v_8,u_6v_7,u_7v_6,u_8v_5$ by $x''\in X'$. 
	Colour each of $v_1w_7,v_3w_5,v_5w_3,v_7w_1$ by $y'\in Y'$, and colour each of $v_2w_8,v_4w_6,v_6w_4,v_8w_2$ by $y''\in Y'$. 
	Colour each of $w_1u_6,w_2u_5,w_5u_2,w_6u_1$ by $z'\in Z'$, and colour each of $w_3u_8,w_4u_7, w_7u_4,w_8u_3$ by $z''\in Z'$. 
	As illustrated in \cref{ConstructTriangles}, crossing edges are assigned distinct colours. 
	Thus $\mathcal{T'}$ is the desired set of $8^{\ell+1}$ triangles.
\end{proof}

\begin{figure}[!h]
	\includegraphics{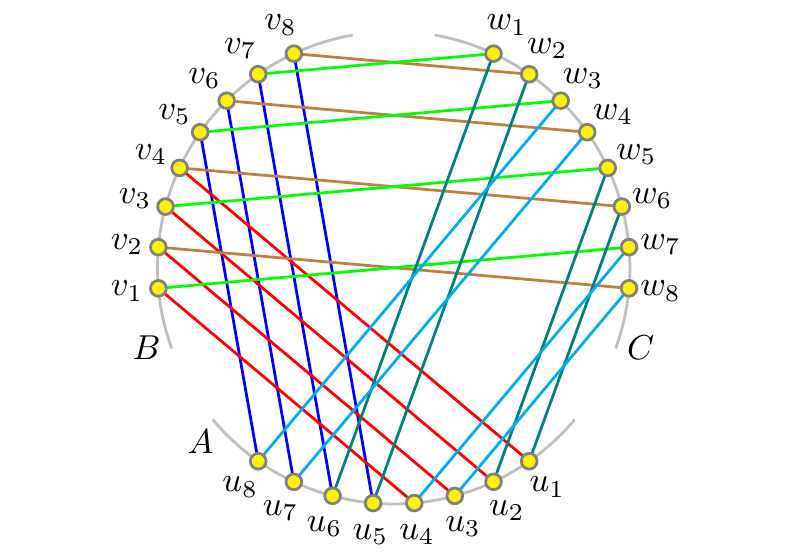}
	\caption{Construction in the proof of \cref{InductiveTriangles}. 
		\label{ConstructTriangles}}
\end{figure}

%%%%%%%%%%%%%%%%%%%%%%%%%%%%%%%%%%%%%%%%%%%%%%%%%%%%%%%%%%%
\section{Upper Bound}
\label{UpperBoundSection}

This section proves the upper bound in \cref{t:StrongProductPaths} showing that $P_n\boxtimes P_n\boxtimes P_n$ has a $O(n^{1/3})$-stack layout. Assume \(V(P_n) = \{1,\dots,n\}\) and \(E(P_n) = \{i(i+1): i \in \{1,\dots,n-1\}\}\).
We start with a sketch of the construction. 
Take a particular \(O(1)\)-stack layout and a proper
\(4\)-colouring of \(P_n \boxtimes P_n\).
In the corresponding ordering of \(V(P_n \boxtimes P_n)\)
replace each vertex \((x, y)\) by vertices
\(((x, y, z_1), \ldots, (x, y, z_n))\) where
\((z_1, \ldots, z_n)\) is a permutation of \(\{1,\dots,n\}\)
determined by the colour of \((x, y)\).
An appropriate choice of the permutations ensures
that the edges of \(P_n \boxtimes P_n \boxtimes P_n\)
can be partitioned into \(O(n^{1/3})\) stacks.
In particular, there can only be \(O(n^{1/3})\)
pairwise crossing edges with respect to the ordering,
so the length of a longest common subsequence of any
two of the permutations is \(O(n^{1/3})\). See \cref{Permutations} for an illustration of the case \(n=8\).

\begin{figure}[!h]
\includegraphics{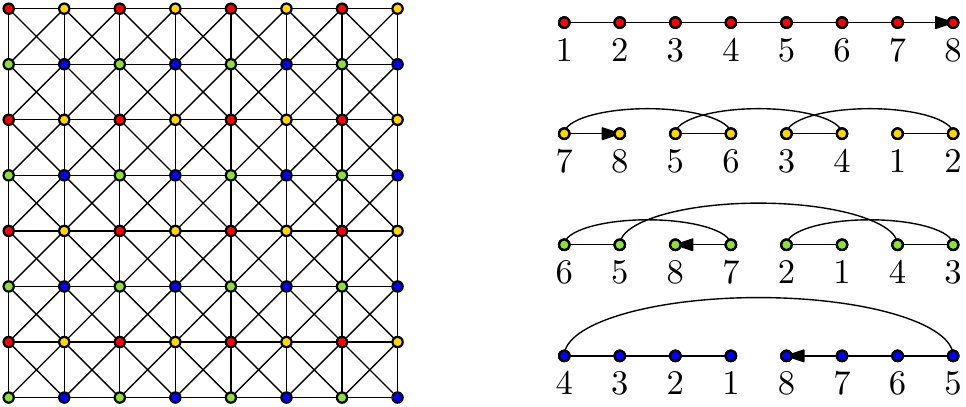}
\caption{Four permutations of \(\{1, \ldots, 8\}\), no two of which have a common subsequence of length greater than \(2\).}
\label{Permutations}
\end{figure}

We actually prove a more general result, \cref{thm:SnGboxtimesPn} below,
which relies on the following definition from the literature \citep{BK79,Pupyrev20a,DSW16,ABDGKP21,GKS89}. An $s$-stack layout $((v_1,\dots,v_n),\psi)$ of a graph $G$ is \defn{dispersable} (also called \defn{pushdown}) if $\psi^{-1}(k)$ is a matching in $G$ for each $k\in \{1,\dots,s\}$. The \defn{dispersable stack-number} $\dsn(G)$ is the minimum $s \in \mathbb{N}_0$ for which there exists a dispersable $s$-stack layout of $G$. For example, $\dsn(P_n\boxtimes P_n)\leq 8$ since in the 4-stack layout of \(P_n \boxtimes P_n\) illustrated in \cref{StackLayout}, each stack is a linear forest; putting alternative edges from each path in distinct stacks produces a dispersable 8-stack layout. In general, $\Delta(G)\leq \dsn(G)\leq(\Delta(G)+1)\sn(G)$ by Vizing's Theorem. So a graph family has bounded dispersable stack-number if and only if it has bounded stack-number and bounded maximum degree.

\begin{thm}\label{thm:SnGboxtimesPn}
  Let \(G\) be a graph with chromatic number \(\chi\)
  and dispersable stack-number \(d\). Let \(n \in \mathbb{N}\) and \(p = \max\{2^{\lceil\log_2 \chi\rceil}, 2\} \). Then 
  \[
    \sn(G \boxtimes P_n) \le 2^{p/2-1}d(2p-1) \cdot n^{1/2-1/(2p-2)} + 2p-3.
  \]
\end{thm}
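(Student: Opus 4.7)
The plan is to combine three ingredients: a dispersable $d$-stack layout of $G$ with matching-stacks $M_1, \ldots, M_d$; a proper colouring $c \colon V(G) \to \{1, \ldots, p\}$ (which exists since $\chi \le p$); and a family of $p$ permutations $\pi_1, \ldots, \pi_p$ of $\{1, \ldots, n\}$ derived from a Hadamard matrix of order $p$. The critical combinatorial lemma to establish first is that for any two distinct $i, j \in \{1, \ldots, p\}$, the longest decreasing subsequence of the composition $\pi_j \circ \pi_i^{-1}$ has length at most $2^{p/2-1}\, n^{1/2 - 1/(2p-2)}$. I expect the construction to be recursive in $\log_2 p$: doubling the family from size $p/2$ to $p$ by interleaving two smaller families on geometrically chosen sub-ranges of $\{1, \ldots, n\}$, so that the orthogonality of Hadamard rows forces any hypothetical long decreasing subsequence to concentrate in one sub-range, enabling induction.

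With the permutation family in hand, I would order $V(G \boxtimes P_n)$ by concatenating blocks $B_1, \ldots, B_N$ following the $G$-layout order $(u_1, \ldots, u_N)$, where $B_i$ holds the $n$ vertices $\{(u_i, z) : z \in \{1, \ldots, n\}\}$ arranged as $(u_i, \pi_{c(u_i)}(1)), \ldots, (u_i, \pi_{c(u_i)}(n))$. The edges split into vertical edges $(v, z)(v, z+1)$ inside single blocks, horizontal edges $(u, z)(v, z)$ for $uv \in E(G)$, and two families of diagonal edges $(u, z)(v, z \pm 1)$. Because blocks are disjoint intervals, edges from different block-pairs never cross; within each matching $M_k$ the block-pairs are pairwise disjoint, so stacks can be reused across block-pairs of the same matching, reducing the analysis to a single block-pair $(B_i, B_j)$ with $u_iu_j \in M_k$. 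There, two horizontal edges $(u_i, z)(u_j, z)$ and $(u_i, z')(u_j, z')$ cross precisely when $z$ and $z'$ appear in opposite orders within $\pi_{c(u_i)}$ and $\pi_{c(u_j)}$, so the minimum stack count for the $n$ horizontal edges equals the longest decreasing subsequence of $\pi_{c(u_j)} \circ \pi_{c(u_i)}^{-1}$, bounded by $2^{p/2-1}\, n^{1/2 - 1/(2p-2)}$ by the lemma (since $c(u_i) \ne c(u_j)$). The diagonal edges satisfy the same bound; a combined stack assignment handling horizontal plus both diagonal families together with boundary corrections for $z \in \{1, n\}$ yields the per-matching factor $(2p-1)$, and summing across $d$ matchings gives the main term $2^{p/2-1}d(2p-1)\, n^{1/2 - 1/(2p-2)}$. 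Vertical edges live inside blocks as Hamilton paths determined by the $\pi_{c(v)}$; their contribution and minor boundary effects are absorbed in the additive $+(2p-3)$.

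The main obstacle is the Hadamard permutation lemma itself: the target exponent $1/2 - 1/(2p-2)$ is strictly below the Erdős--Szekeres floor of $\sqrt{n}$ for the longest decreasing subsequence of an arbitrary single permutation, so the algebraic orthogonality of Hadamard rows must be exploited in an essential way that goes beyond probabilistic arguments. A secondary difficulty is combining the contributions of horizontal, diagonal, and vertical edges without incurring extra constant factors, so that the stated constant $2^{p/2-1}d(2p-1)$ emerges cleanly with only the small additive correction $+(2p-3)$.
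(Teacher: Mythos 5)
Your high-level architecture matches the paper closely: order $V(G\boxtimes P_n)$ by concatenating blocks in the order given by a dispersable stack layout of $G$, permute each block internally by one of $p$ Hadamard-derived permutations chosen according to a proper $p$-colouring of $G$, exploit that matchings give disjoint block-pairs so stacks can be reused within each of the $d$ matchings, and charge the vertical (intra-block) edges to the additive $+(2p-3)$ term. All of that is how the paper proceeds (via its \cref{PermutationSums} and \cref{lem:SnGboxtimesPn}).

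However, there is a genuine error at the heart of your key lemma, and it makes the lemma false as stated. When block $B_i$ comes entirely before block $B_j$ in the vertex ordering, two horizontal edges $(u_i,z)(u_j,z)$ and $(u_i,z')(u_j,z')$ \emph{cross} precisely when $z$ and $z'$ appear in the \emph{same} relative order in $\pi_{c(u_i)}$ and $\pi_{c(u_j)}$ (interleaving, not nesting), not in opposite orders. So a pairwise-crossing family corresponds to a common subsequence of the two permutations, equivalently a longest \emph{increasing} subsequence of $\pi_{c(u_j)}\circ\pi_{c(u_i)}^{-1}$. Bounding the longest \emph{decreasing} subsequence is not only the wrong target, it is unachievable: in the Hadamard-digit construction the values $\pi_k(i)+\pi_\ell(i)$ take at most $m^{p/2-1}=n^{1/2-1/(2p-2)}$ values (with $n=m^{p-1}$), and each level set of the sum is itself a decreasing subsequence of the composition; by averaging, one of them has length at least $m^{p/2}$, which strictly exceeds $2^{p/2-1}n^{1/2-1/(2p-2)}=2^{p/2-1}m^{p/2-1}$ once $m>2^{p/2-1}$.

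The paper sidesteps this by proving a slightly different, sharper statement than an LCS bound: it bounds the \emph{number of distinct values} of $\pi_k(i)+\pi_\ell(j)$ over pairs $i,j$ with $|i-j|\le 1$, by $(2p-1)n^{1/2-1/(2p-2)}$. Colouring each cross-block edge $(u,i)(v,j)$ by $\pi_{\rho(u)}(i)+\pi_{\rho(v)}(j)$ then gives a direct stack partition: any two crossing edges have strictly different sums. This has two concrete advantages over a common-subsequence lemma. First, it handles horizontal and diagonal edges in a single stroke (the $|i-j|\le 1$ condition), so the $(2p-1)$ factor appears automatically rather than via an ad hoc combination of three families with boundary corrections. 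Second, it yields a stack assignment without appealing to Dilworth. You should also note that the factor $2^{p/2-1}$ in the theorem does not come from the permutation lemma at all; it comes from rounding $n$ up to the nearest $(p-1)$-th power $m^{p-1}$ so that the digit construction applies, i.e.\ from $(n^{1/(p-1)}+1)^{p/2-1}\le 2^{p/2-1}n^{1/2-1/(2p-2)}$. The permutation lemma itself gives $(2p-1)n^{1/2-1/(2p-2)}$ distinct sums and a $(2p-3)$-stack layout of $P_n$ in each permuted block order, which is exactly where your additive $+(2p-3)$ comes from.
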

Since \(\chi(P_n \boxtimes P_n) \le 4\) and \(\dsn(P_n \boxtimes P_n) \le 8\), \cref{thm:SnGboxtimesPn} implies that 
\[\sn(P_n \boxtimes P_n \boxtimes P_n) \le 2\cdot8\cdot7n^{1/3}+5=112n^{1/3}+5.\] 
Furthermore, \(\chi(G) \le \Delta(G) + 1 \le \dsn(G) + 1\). So
\cref{thm:SnGboxtimesPn} shows that graphs $G$ with bounded dispersable stack-number
satisfy \(\sn(G \boxtimes P_n)\in O(n^{1/2-\epsilon})\) for some \(\epsilon>0\).

For an even positive integer \(p\), a \defn{Hadamard matrix} of order \(p\) is a \(p \times p\) matrix \(H\) with all entries in \(\{+1, -1\}\) such that every pair of distinct rows differs in exactly \(p/2\) entries.
An example of a Hadamard matrix of order \(4\) is
\[
\begin{bmatrix}
+1 & +1 & +1 & +1\\
-1 & -1 & +1 & +1\\
-1 & +1 & -1 & +1\\
+1 & -1 & -1 & +1\\
\end{bmatrix}.
\]
\citet{Sylvester} proved that the order of a Hadamard matrix is \(2\) or is divisible by \(4\). He also constructed a Hadamard matrix of order any power of 2. \citet{Paley} constructed a Hadamard matrix of order $q+1$ for any prime power $q\equiv 3 \pmod 4$, and a Hadamard matrix of order $2(q+1)$ for any prime power $q\equiv 1 \pmod 4$. The Hadamard Conjecture proposes that there exists a Hadamard matrix of order \(p\) whenever \(p\) is divisible by 4. This conjecture has been verified for numerous values of $p$; see \citep{Hadamard} for example.

The following lemma captures a property of the construction of permutations without long common subsequences by \citet{BBHN} that is crucial in the proof of \cref{thm:SnGboxtimesPn}.

\begin{lem}\label{PermutationSums}
  Assume there exists a Hadamard matrix of order \(p\). Let \(m \in \mathbb{N}\) and \(n = m^{p-1}\). Then there exist permutations \(\pi_1, \ldots, \pi_p \colon \{1,\dots,n\} \to \{1,\dots,n\}\) such that for all distinct \(k, \ell \in \{1,\dots,p\}\), 
  \[
    |\{\pi_{k}(i) + \pi_{\ell}(j): i, j \in \{1,\dots,n\}, |i-j| \le 1\}| \le (2p-1)n^{1/2-1/(2p-2)},
  \]
  and for each \(k \in \{1,\dots,p\}\) there exists a \((2p-3)\)-stack layout of \(P_{n}\) using the vertex ordering \((\pi_k^{-1}(1), \ldots, \pi_k^{-1}(n))\).
\end{lem}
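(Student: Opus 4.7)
The plan is to construct the permutations using a reflected $m$-ary Gray code combined with coordinate reflections controlled by a normalized Hadamard matrix. By flipping rows and columns as needed, normalize $H = (h_{k,s})$ so that the last row and the last column consist entirely of $+1$'s. Then for any distinct $k,\ell \in \{1,\dots,p\}$, orthogonality forces $h_{k,s} = h_{\ell,s}$ in exactly $p/2-1$ of the first $p-1$ columns (\emph{agreement} positions) and $h_{k,s} \ne h_{\ell,s}$ in the remaining $p/2$ columns (\emph{disagreement} positions). Let $\gamma \colon \{0,\dots,n-1\} \to \{0,\dots,m-1\}^{p-1}$ be the reflected $m$-ary Gray code, so that $\gamma(i-1)$ and $\gamma(i)$ differ in a single coordinate, call it $s^*(i)$, by exactly $\pm 1$. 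For each $k$ and $s$, let $\tau_{k,s}$ be the identity on $\{0,\dots,m-1\}$ if $h_{k,s} = +1$, and the reflection $a \mapsto m-1-a$ if $h_{k,s} = -1$. Define
\[
  \pi_k(i) = 1 + \sum_{s=1}^{p-1} \tau_{k,s}\bigl(\gamma(i-1)_s\bigr)\, m^{s-1};
\]
each $\pi_k$ is a permutation of $\{1,\dots,n\}$ since $\gamma$, each $\tau_{k,s}$, and the base-$m$ encoding are bijections.

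To prove the sum-set bound, fix distinct $k,\ell$ and expand $\pi_k(i) + \pi_\ell(j) = 2 + \sum_s c_s m^{s-1}$ where $c_s = \tau_{k,s}(\gamma(i-1)_s) + \tau_{\ell,s}(\gamma(j-1)_s)$. A short computation shows that at each disagreement position~$s$ one has $c_s = m-1 + \epsilon_s\bigl(\gamma(i-1)_s - \gamma(j-1)_s\bigr)$ for some sign $\epsilon_s \in \{\pm 1\}$, while at each agreement position $c_s$ depends only on $\gamma(i-1)_s + \gamma(j-1)_s$. When $|i-j| \le 1$, the Gray code property forces $\gamma(i-1)$ and $\gamma(j-1)$ either to coincide or to differ in a single coordinate by $\pm 1$. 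Splitting by whether $i = j$ and, if not, whether the single differing coordinate is an agreement or disagreement position, each sum is pinned down up to $O(m)$ choices at the ``special'' coordinate together with $m$ choices at each of the remaining $\le p/2-1$ agreement positions. Summing these contributions yields at most $(2p-1) m^{p/2-1} = (2p-1) n^{1/2 - 1/(2p-2)}$ distinct sums, which is the required bound.

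For the stack-layout claim, observe that the edges of $P_n$ under the ordering $(\pi_k^{-1}(1),\dots,\pi_k^{-1}(n))$ connect positions $\pi_k(i)$ and $\pi_k(i+1)$; by the coordinate-wise action of the $\tau_{k,s}$ and the single-coordinate Gray code step, these positions differ by exactly $\pm m^{s^*(i)-1}$. Partition the edges into $p-1$ classes indexed by the value of $s^*(i)$. All class-$1$ edges have length~$1$ and occupy disjoint (or endpoint-sharing) intervals in $\{1,\dots,n\}$, so they form a single non-crossing stack. For each class $s \ge 2$, a look at the recursive construction of the reflected $m$-ary Gray code shows that the positions of class-$s$ edges are confined to ``blocks'' of length $m^s$, within which the edges decompose into at most two non-crossing subsets (for instance, by splitting according to the forward/backward orientation of the sub-Gray-code traversal containing the edge). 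Summing across classes gives $1 + 2(p-2) = 2p-3$ stacks in total.

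The main obstacle is the structural analysis in the stack-layout claim: verifying that within each class $s \ge 2$ the edges decompose into two non-crossing groups requires unwinding the recursive definition of the reflected Gray code and tracking how coordinate reflections permute the blocks in which the edges sit. By contrast, the sum-set counting, while involving several cases, is largely mechanical once the Hadamard orthogonality is used to bound the number of agreement and disagreement positions.
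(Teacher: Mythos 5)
Your proposal is correct, and it takes a genuinely different route from the paper's proof, which does \emph{not} use a Gray code: the paper simply writes $i-1$ in standard base $m$, applies the Hadamard-controlled coordinate reflections to those digits, and sets $\pi_k(i) = 1 + \sum_a d_k(i,a)m^a$. You instead first pass $i-1$ through a reflected $m$-ary Gray code $\gamma$ and then reflect. The practical difference shows up in both halves of the argument. For the sum-set bound, your construction guarantees that $\gamma(i-1)$ and $\gamma(j-1)$ with $|i-j|=1$ differ in exactly one coordinate $s^*$ by $\pm 1$, which collapses the case analysis: the $p/2-1$ agreement special coordinates each contribute at most $m^{p/2-1}$ sums and the $p/2$ disagreement special coordinates each contribute at most $2m^{p/2-1}$, plus $m^{p/2-1}$ for $i=j$, giving a total of $\le \tfrac{3p}{2}m^{p/2-1} \le (2p-1)m^{p/2-1}$. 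The paper has no such single special coordinate, because $i\to i+1$ may cascade a carry across many digits; it handles this by parametrizing on $c_{i,j}$, the lowest nonzero digit of $j$, and observing that all disagreement-position contributions are then determined by $c_{i,j}$, yielding $(p-1)m^{p/2-1}$ sums for each of $j=i+1$ and $j=i-1$. Both bookkeepings reach the stated bound; yours is marginally cleaner on this half. For the stack layout, your Gray code gives every class-$s$ edge length exactly $m^{s-1}$, and the claimed $2$-coloring is by the offset at which the coordinate-$s$ step occurs: within a superblock of $m^s$ positions, the lower-coordinate seam positions alternate between two fixed values (the images under the reflections of the first and last codewords of the length-$(s-1)$ Gray code), and the only crossings are between edges of different offsets, so each offset class is a stack. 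You should make this argument explicit — your appeal to ``forward/backward orientation'' is the right heuristic, but as you note it needs the block computation to verify that edges of the same offset within a superblock are disjoint intervals while edges of different offsets form a matching in the crossing graph. The paper sidesteps this: its level-$a$ long edges have a messier displacement $\pm m^a \pm (m-1)(m^{a-1}+\cdots+1)$, but its $2$-coloring by the parity of $i/m^a$ works because same-parity edges straddle level-$a$ block boundaries that do not share a level-$a$ block, hence occupy disjoint position intervals. One minor remark: you normalize both the last row and the last column of $H$; the paper normalizes only the last column ($\had{k}{p-1}=+1$ for all $k$), which is all that is needed — normalizing the last row is harmless but superfluous. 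Overall your construction is correct and valid; it trades a cleaner sum-set count for a slightly more delicate stack-layout analysis.
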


\begin{proof}
	Since there exists a Hadamard matrix of order \(p\), there exist \(\pm1\)-vectors \(h_{1}, \ldots, h_p\), any two of which differ in exactly \(p/2\) entries.
	Assume \(h_{k} = (\had{k}{0}, \ldots, \had{k}{p-1})\) for each $k\in\{1,\dots,p\}$. 	By possibly negating all entries in some of these vectors,
	we may assume \(\had{k}{p-1} = +1\) for all \(k \in \{1,\dots,p\}\).%}

  For each \(i \in \{1,\dots,n\}\), let \(d(i, 0), \ldots, d(i, p-2) \in \{0, \ldots, m-1\} \) denote the digits of \(i-1\) in base \(m\), so that
  \[
    i = 1+\sum_{a=0}^{p-2}d(i, a)m^a.
  \]
  Note that the mapping \(\{1,\dots,n\} \ni i \mapsto (d(i, 0), \ldots, d(i, p-2)) \in \{0, \ldots, m-1\}^{p-1}\) is bijective.

  For all \(k \in \{1,\dots,p\}\), \(i \in \{1,\dots,n\}\) and \(a \in \{0,\ldots,p-2\}\), let
  \[
  d_k(i, a) =
  \begin{cases}
  d(i, a)&\textrm{if \(\had{k}{a}=+1\),}\\
  m-1-d(i, a)&\textrm{if \(\had{k}{a}=-1\).}
  \end{cases}
  \]
  Observe that for any \(k, \ell \in \{1,\dots,p\}\) and \(a \in \{0, \ldots, p-2\}\), if $\had{k}{a}=-\had{\ell}{a}$ then $d_{k}(i, a)+d_{\ell}(i, a)=m-1$ for all $i \in \{1,\dots,n\}$. 

  For every \(k \in \{1,\dots,p\}\), define \(\pi_k\) by
  \[
    \pi_k(i) = 1+\sum_{a=0}^{p-2} d_k(i, a)m^a.
  \]

  We claim that \(\pi_1\), \ldots, \(\pi_p\) satisfy the lemma. Fix distinct \(k, \ell \in \{1,\dots,p\}\).  For \(i, j \in \{1,\dots,n\}\) and \(a \in \{0, \ldots, p-2\}\), define \(\tau_a(i, j) = d_{k}(i, a) + d_{\ell}(j, a)\), and let
  \[
    \tau(i, j) = (\tau_0(i, j), \ldots, \tau_{p-2}(i, j)).
  \] 
  Since \(\pi_{k}(i) + \pi_{\ell}(j) =
  \sum_{b=0}^{p-2} \tau_a(i, j)m^a\), the value of \(\tau(i, j)\) determines the value of \(\pi_{k}(i) + \pi_{\ell}(j)\). As such, to prove the first part of the lemma it suffices to show that
  \[
    |\{\tau(i, j): i, j \in \{1,\dots,n\}, |i-j|\le 1\}| \le (2p-1)n^{1/2-1/(2p-2)}.
  \]
  Let \(A^+ =\{a \in \{0, \ldots, p-2\}: \had{k}{a} = \had{\ell}{a}\}\) and 
  \(A^- =\{a \in \{0, \ldots, p-2\}: \had{k}{a} = -\had{\ell}{a}\}\). %}
  Since \(h_{k}\) and \(h_{\ell}\) differ in exactly \(p/2\) entries and \(\had{k}{p-1} = +1 = \had{\ell}{p-1}\), we have \(|A^+| = p/2-1\)
  and \(|A^-| = p/2\).

  Let \(i \in \{1,\dots,n\}\).
  For every \(a \in A^-\), we have \(\tau_a(i, i)= m-1\) since \(\had{k}{a}=-\had{\ell}{a}\).
  For every \(a \in A^+\) we have \(d_{k}(i, a) = d_{\ell}(i, a) \in \{0, \ldots, m-1\}\), and thus \(\tau_a(i, i)\) is an even integer between \(0\) and \(2m-2\).
  Hence
  \[
  |\{\tau(i, i): i \in \{1,\dots,n\}\}| \le 1^{|A^-|}m^{|A^+|} = m^{p/2-1}=n^{(p/2-1)/(p-1)}=n^{1/2-1/(2p-2)}.
  \]
  Now let \(i, j \in \{1,\dots,n\}\) be such that \(j-i=1\). We first show that $d(j,a)-d(i,a)\in \{-(m-1),1,0\}$ for all $a \in \{0,\dots,p-2\}$. 
  Since \(j > 1\), there exists \(b \in \{0, \ldots, p-2\}\) such that \(d(j, b) \neq 0\). 
  Let \(c_{i, j}\) denote the least element \(c \in \{0, \ldots, p-2\}\) such that \(d(j, c) \neq 0\). 
  Observe that whenever \(0 \le a < c_{i, j}\), we have \(d(i, a) = m - 1\) and \(d(j, a) = 0\).
  Furthermore, \(d(j, c_{i, j}) - d(i, c_{i, j}) = 1\), and
  \(d(j, b) = d(i, b)\) whenever \(c_{i, j} < b \le p-2\).
  In particular, the value of \(c_{i, j}\) determines the value of \(d(j, b) - d(i, b)\) for each \(b \in \{0, \ldots, p-2\}\):
  \[
  d(j, b) - d(i, b) =
  \begin{cases}
    -(m-1)&\textrm{if }b<c_{i, j},\\
    1&\textrm{if }b=c_{i, j},\\
    0&\textrm{if }b>c_{i, j}.\\
  \end{cases}
  \]
  Hence for any \(c \in \{0, \ldots, p-2\}\) and \(a \in A^-\),
  for all pairs \((i, j)\) with \(j-i=1\) and \(c_{i, j} = c\),
  the value of \(\tau_a(i, j)\) is determined and is independent of \(i\) and \(j\):
  %Hence \(c\) determines the value of \(\tau_b(i, j)\) for each \(b \in B^-\):
  if \(\had{k}{a} = +1\) and \(\had{\ell}{a} = -1\), then \(\tau_a(i, j) = m-1-(d(j, a) - d(i, a))\), and if \(\had{k}{a} = -1\) and \(\had{\ell}{a} = +1\), then \(\tau_a(i, j) = m-1+(d(j, a) - d(i, a))\). 

  Let \(a \in A^+\).
  If \(a < c_{i,j}\), then \(d(i, a) = m - 1\) and \(d(j, a) = 0\) so \(\{d_{k}(i, a), d_{\ell}(j, a)\} = \{0, m-1\}\) and thus \(\tau_a(i, j) = m-1\).
  If \(a = c_{i, j}\), then \(|d_{k}(i, a) - d_{\ell}(j, a)| = 1\) since \(d(j, c_{i, j}) - d(i, c_{i, j}) = 1\), and thus \(\tau_a(i, j)\) is an odd integer between \(1\) and \(2m-3\).
  If \(a > c_{i, j}\), then \(d_{k}(i, a) = d_{\ell}(j, a)\) since \(d(i, a) = d(j, a)\), and thus \(\tau_a(i, j)\) is an even integer between \(0\) and \(2m-2\).

  Summarizing, for any \(a, c \in \{0, \ldots, p-2\}\),
  \[
    |\{\tau_a(i, j): i, j \in \{1,\dots,n\}, j - i = 1, c(i, j) = c\}| \le
    \begin{cases}
    1&\textrm{if \(a \in A^-\),}\\
    m&\textrm{if \(a \in A^+\).}\\
    \end{cases}
  \]

  Hence 
  \begin{align*}
    |\{\tau(i, j): i, j \in \{1,\dots,n\}, j-i=1\}|
    &\le \sum_{c=0}^{p-2}|\{\tau(i, j): i, j \in \{1,\dots,n\}, j-i=1, c_{i, j} = c\}|\\
    &\le(p-1)1^{|A^-|}m^{|A^+|}\\
    &=(p-1)m^{p/2-1}\\
    &=(p-1)n^{1/2-1/(2p-2)}.
  \end{align*}
By a symmetric argument,
  \[
    |\{\tau(i, j): i, j \in \{1,\dots,n\}, j-i=-1\}| \le (p-1)n^{1/2-1/(2p-2)}.
  \]
  Therefore \[|\{\tau(i, j): i, j \in \{1,\dots,n\}, |j-i|\le1\}| \le (1+2(p-1))n^{1/2-1/(2p-2)} = (2p-1)n^{1/2-1/(2p-2)},\] 
  which completes the proof of the first part of the lemma. 
  
  It remains to show that for each \(k \in \{1,\dots,p\}\), the set \(E(P_n)\) can be partitioned into \(2p-3\) stacks with respect to \((\pi_k^{-1}(1), \ldots, \pi_k^{-1}(m^{p-1}))\).
  Observe that for each \(a \in \{0, \ldots, p-2\}\) and each \(j \in \{1,\dots,m^{p-1-a}\}\), the integers \((j-1)m^a+1\), \ldots, \(jm^a\) forms a block of consecutive elements in \((\pi_k^{-1}(1), \ldots, \pi_k^{-1}(n))\). Furthermore, if \(j\) is not divisible by \(m\), then the blocks corresponding to \(j\) and \(j+1\) are consecutive. Construct a partition of $E(P_n)$ into \(2p-3\) stacks as follows. The first stack consists of the edges \(i(i+1)\) such that \(i\) is not divisible by \(m\). For each \(a \in \{1,\dots,p-2\}\), partition the edges \(i(i+1)\) such that \(i\) is divisible by \(m^a\) but not by \(m^{a+1}\) into two stacks: edge \(i(i+1)\) is in the first stack if \(i/m^a\) is odd and in the second stack if \(i/m^a\) is even.   The resulting two sets of edges are indeed stacks; in fact, no two edges cross or even nest in these sets.  Thus \(E(P_n)\) can be partitioned into \((2p-3)\) stacks.
\end{proof}

\cref{thm:SnGboxtimesPn} is a consequence of the following technical variant.

\begin{lem}\label{lem:SnGboxtimesPn}
  Assume there exists a Hadamard matrix of order \(p\). Let \(m \in \mathbb{N}\) and \(n=m^{p-1}\). Let \(G\) be a \(p\)-colourable graph with a dispersable \(d\)-stack layout. Then
  \[
    \sn(G \boxtimes P_n) \le d(2p-1)n^{1/2-1/(2p-2)}+2p-3.
  \]
\end{lem}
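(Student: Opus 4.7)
The plan is to combine the dispersable stack layout of $G$, a proper $p$-coloring of $G$, and the permutations $\pi_1, \ldots, \pi_p$ from \cref{PermutationSums} into a stack layout of $G \boxtimes P_n$. Fix a dispersable $d$-stack layout of $G$ with vertex ordering $(w_1, \ldots, w_N)$ and stack-assignment $\psi \colon E(G) \to \{1, \ldots, d\}$, and fix a proper coloring $c \colon V(G) \to \{1, \ldots, p\}$. Order $V(G \boxtimes P_n)$ by replacing each $w_s$ with the block $B_{w_s} = ((w_s, \pi_{c(w_s)}^{-1}(1)), \ldots, (w_s, \pi_{c(w_s)}^{-1}(n)))$, so that within $B_{w_s}$ the vertex $(w_s, i)$ sits at relative position $\pi_{c(w_s)}(i)$ and the blocks occupy disjoint consecutive intervals of the global ordering.

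Split $E(G \boxtimes P_n)$ into \emph{vertical} edges $(u, i)(u, i+1)$ and \emph{cross} edges $(u, i)(v, j)$ with $uv \in E(G)$. For the vertical edges within a single block $B_u$, the last conclusion of \cref{PermutationSums} provides a partition into $2p-3$ stacks with respect to the ordering of $B_u$; since the blocks are disjoint, these combine into $2p-3$ global stacks. For the cross edges, assign each edge $(u, i)(v, j)$ (with $u$ preceding $v$ in the $G$-ordering) the color $(\psi(uv), \pi_{c(u)}(i) + \pi_{c(v)}(j))$. Since $c$ is proper, $c(u) \ne c(v)$, so the first conclusion of \cref{PermutationSums} caps the number of sum values at $(2p-1)n^{1/2 - 1/(2p-2)}$, giving at most $d(2p-1)n^{1/2 - 1/(2p-2)}$ colors on cross edges.

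The heart of the argument is to check that each color class of cross edges is non-crossing. Consider two distinct same-color cross edges $e = (u, i)(v, j)$ and $e' = (u', i')(v', j')$. Since the $\psi(uv)$-th stack of the $G$-layout is a matching (by dispersability) and pairwise non-crossing in the $G$-ordering (by being a stack), either $\{u, v\} \cap \{u', v'\} = \emptyset$, in which case $uv$ and $u'v'$ are nested or separated in the $G$-ordering and the corresponding block pairs carry this same relationship over to $e$ and $e'$; or $\{u, v\} = \{u', v'\}$, say with $u = u'$ and $v = v'$, in which case equality of signatures gives $\pi_{c(u)}(i) - \pi_{c(u)}(i') = \pi_{c(v)}(j') - \pi_{c(v)}(j)$, forcing one of $e, e'$ to nest inside the other within $B_u \cup B_v$.

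Putting the two bounds together produces the desired total of $d(2p-1)n^{1/2 - 1/(2p-2)} + (2p-3)$ stacks. The main obstacle is the coincident case above: one must recognize that $\pi_{c(u)}(i) + \pi_{c(v)}(j)$ is the correct signature precisely because equal sums translate into the endpoint offsets inside $B_u$ and $B_v$ being exact negatives of each other, which is exactly the algebraic condition that forces nesting rather than crossing. Dispersability then neatly removes any interaction between product edges originating from distinct $G$-edges within the same stack, permitting the sum signatures to be freely shared across them.
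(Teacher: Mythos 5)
Your proposal is correct and follows essentially the same route as the paper: the same vertex ordering by permuted blocks, the same partition into vertical edges (handled by the $(2p-3)$-stack layout of $P_n$ from \cref{PermutationSums}) and cross edges, and the same signature $\pi_{c(u)}(i) + \pi_{c(v)}(j)$ refined by $\psi$. The paper phrases the verification by showing crossing cross-edges from the same $E_{uv}$ get strictly increasing $\phi$-values, whereas you phrase it contrapositively via a case split on $\{u,v\}\cap\{u',v'\}$ using dispersability, but the content is identical.
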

Before proving~\cref{lem:SnGboxtimesPn}, we show that it implies \cref{thm:SnGboxtimesPn}.

\begin{proof}[Proof of \cref{thm:SnGboxtimesPn}]
  Let \(m=\lceil n^{1/(p-1)}\rceil\).
  Since \(p\) is a power of $2$, there exists a Hadamard matrix of order \(p\).
  Since \(P_n \subseteq P_{m^{p-1}}\), \cref{lem:SnGboxtimesPn} implies
  \begin{align*}
    \sn(G \boxtimes P_n) &\le \sn(G \boxtimes P_{m^{p-1}})\\
    &\le d(2p-1)m^{(p-1)(1/2 - 1/(2p-2))}+2p-3\\
    &\le d(2p-1)m^{p/2-1}+2p-3\\
    &< d(2p-1)(n^{1/(p-1)}+1)^{p/2-1}+2p-3\\
    &\le d(2p-1)(2n^{1/(p-1)})^{p/2-1}+2p-3\\
    &=2^{p/2-1}d(2p-1)n^{1/2 - 1/(2p-2)}. \qedhere
  \end{align*}
\end{proof}

\begin{proof}[Proof of \cref{lem:SnGboxtimesPn}]
  Let \(\rho\colon V(G) \to \{1,\dots,p\}\) 
  be a proper colouring of \(G\). 
  Let $\pi_1,\dots,\pi_p$ be the permutations of $\{1,\dots,n\}$ given by \cref{PermutationSums}.
  For each \(v \in V(G)\), let \(P^v\) denote the path in \(G \boxtimes P_n\) induced by \(\{v\} \times \{1,\dots,n\}\),
  and let \(\overrightarrow{P^v} = ((v, \pi_{\rho(v)}^{-1}(1)), \ldots (v, \pi_{\rho(v)}^{-1}(n)))\).
  Let \(N = |V(G)|\), and let \(((v_1, \ldots, v_N), \psi)\) be a dispersable \(d\)-stack layout of \(G\).
  Let \(\overrightarrow{V} = (\overrightarrow{P^{v_1}}; \overrightarrow{P^{v_2}}; \ldots; \overrightarrow{P^{v_N}})\) be an ordering of \(V(G \boxtimes P_{n})\). By our choice of the permutations \(\pi_1\), \ldots, \(\pi_p\), for each \(v \in V(G)\), the set \(E(P^{v})\) can be partitioned into \(2p-3\) stacks. Since the paths \(P^v\) occupy disjoint parts of \(\overrightarrow{V}\), it follows that $\bigcup_{v \in V(G)} E(P^v)$ admits a partition into \(2p-3\) stacks with respect to \(\overrightarrow{V}\).

  We partition the set \(E(G \boxtimes P_{n}) \setminus \bigcup_{v \in V(G)} E(P^v)\) into sets \(E_{u v}\) indexed by the edges of \(G\).
  For an edge $uv \in E(G)$, let $E_{uv}=\{xy \in E(G\boxtimes P_n):x \in V(P^u), y \in V(P^v)\}$. For each $xy \in E_{uv}$, let $\gamma(xy)=\psi(uv)$. We claim that for all $k \in \{1,\dots,d\}$, $\gamma^{-1}(k)$ can be partitioned into at most $(2p-1)n^{1/2-1/(2p-2)}$ stacks with respect to $\overrightarrow{V}$. Since $\psi$ is a dispersable stack-layout, it suffices to show that for every $uv\in E(G)$, $E_{uv}$ can be partitioned into at most $(2p-1)n^{1/2-1/(2p-2)}$ stacks.

  Fix an edge \(u v \in E(G)\). For each edge \(e = (u, i)(v, j) \in E_{uv}\), define
  \[
    \phi(e) = \pi_{\rho(u)}(i) + \pi_{\rho(v)}(j).
  \]
  By our choice of the permutations \(\pi_1\), \ldots, \(\pi_p\), the size of the image of \(\phi\) is at most \((2p-1)n^{1/2-1/(2p-2)}\). It remains to show that $\phi$ partitions $E_{uv}$ into stacks with respect to \(\overrightarrow{V}\). Let \(e = (u, i)(v, j)\) and \(e' = (u, i')(v, j')\) be two edges from \(E_{uv}\) which cross. Without loss of generality, assume that the vertices are in the order \((u, i)\), \((u', i')\), \((v, j)\), \((v', j')\)
  in \(\overrightarrow{V}\).
  This means that \(\pi_{\rho(u)}(i) < \pi_{\rho(u)}(i')\) and \(\pi_{\rho(v)}(j) < \pi_{\rho(v)}(j')\), so
  \(\phi(e) = \pi_{\rho(u)}(i) + \pi_{\rho(v)}(j) < \pi_{\rho(u)}(i') + \pi_{\rho(v)}(j') = \phi(e')\), so \(\phi(e) \neq \phi(e')\). Hence \(\phi\) partitions \(E_{u v}\) into $(2p-1)n^{1/2-1/(2p-2)}$ stacks, as required.
\end{proof}

Note that for values of $p$ that are not powers of 2 but there exists a Hadamard matrix of order $p$, 
	\cref{lem:SnGboxtimesPn} gives a stronger bound than 
	\cref{thm:SnGboxtimesPn}.

\section{Smaller maximum degree}
\label{MaxDegree}

This section proves \cref{t:Maxsnqn}, which says that if $\Delta_0$ is the minimum integer for which there exists a graph family with maximum degree $\Delta_0$, unbounded stack-number and bounded queue-number, then $\Delta_0\in\{6,7\}$. These upper and lower bounds are respectively proved in \cref{DeltaLe7,DeltaGt5} below. 

\begin{thm}\label{DeltaLe7}
	There exists a graph family with maximum degree \(7\), unbounded stack-number and bounded queue-number. 
\end{thm}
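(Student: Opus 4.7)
The plan is to construct a graph family $(G_n)_{n\in\mathbb{N}}$ of maximum degree~$7$, derived from the bitruncated cubic honeycomb---the tessellation of $\mathbb{R}^3$ by truncated octahedra. Its $1$-skeleton is $4$-regular at interior vertices, and each vertex is incident to exactly $2$ square and $4$ hexagonal $2$-faces. We augment this $1$-skeleton with a carefully chosen collection of face-diagonals that produces enough triangles to drive the topological lower bound, while adding at most $3$ extra edges per vertex so that the final maximum degree is at most~$7$.

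The queue-number bound follows from a volume estimate: $G_n$ has $\Theta(n^3)$ vertices and embeds in $\mathbb{R}^3$ with bounded local density, so it has polynomial growth of order three, $f_{G_n}(r)\in O(r^3)$. By \cref{c:GrowthQueue}, $\qn(G_n)\in 2^{O(1)}$, independent of~$n$.

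The lower bound on stack-number is obtained by adapting the proof of \cref{l:PPPOverlap2}. The honeycomb has a natural three-axis coordinate system coming from the body-centred cubic lattice. For each axis $i\in\{1,2,3\}$ and each coordinate value~$u$ in the $n \times n \times n$ region, we define a slab subcomplex $X_i(u)\subseteq\Tr(G_n)$ supported on the part of $G_n$ with $i$th coordinate equal to~$u$. Provided the triangulation is chosen so that each $X_i(u)$ is simply connected, pairwise intersections of slabs from different axes are connected, and triple intersections $X_1(u_1)\cap X_2(u_2)\cap X_3(u_3)$ are non-empty, \cref{c:bramble} implies that the family $\{X_1(u_1)\cup X_2(u_2)\cup X_3(u_3)\}$ indexed by valid triples is a bramble over $\Tr(G_n)$. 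Since this bramble has size $\Theta(n^3)$ and congestion $O(n^2)$, \cref{l:bramble} gives $\overlap(\Tr(G_n),\mathbb{R}^2)\in\Omega(n)$. Because each vertex of $G_n$ lies in only boundedly many triangles (as $\Delta(G_n)\le 7$), \cref{l:SNvsOverlap} then yields $\sn(G_n)\in\Omega(n^{1/3})$, which is unbounded.

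The principal obstacle is the construction in the first step. A naive triangulation of every $2$-face of the honeycomb (one diagonal per square and three diagonals per hexagon) would add approximately five extra edges per vertex on average, pushing the maximum degree well above~$7$. One must instead exploit the symmetry of the honeycomb to select diagonals in a globally consistent, orbit-based pattern so that each vertex receives at most~$3$ extras. This triangulation must simultaneously satisfy three constraints: (i) respect the degree budget of three extras per vertex, (ii) create enough triangles at each vertex to activate \cref{l:SNvsOverlap}, and (iii) ensure that the slab subcomplexes $X_i(u)$ remain simply connected, with their pairwise and triple intersections behaving as in \cref{l:PPPOverlap2}. Finding a single triangulation of the bitruncated cubic honeycomb that meets all three requirements is the technical heart of the argument.
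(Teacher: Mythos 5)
Your high-level strategy matches the paper's: use the truncated-octahedron (Fedorov) tessellation of $\mathbb{R}^3$, get bounded queue-number from cubic growth via \cref{c:GrowthQueue}, build three families of slab subcomplexes, verify the bramble conditions via \cref{c:bramble} to get $\overlap(\Tr(G_n),\mathbb{R}^2)\in\Omega(n)$ from \cref{l:bramble}, and then apply \cref{l:SNvsOverlap}. However, the step you yourself flag as the ``technical heart'' --- choosing diagonals of the honeycomb faces so that each original vertex gains at most $3$ extra edges --- is not merely hard, it is impossible, and this is a genuine gap.

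Here is the counting obstruction. In $\mathcal{T}_\infty$ each corner is incident to $2$ squares and $4$ hexagons, so $\#\text{squares}=\tfrac12\#\text{vertices}$ and $\#\text{hexagons}=\tfrac23\#\text{vertices}$. Triangulating a square uses $1$ diagonal and a hexagon uses $3$, so the total number of diagonal endpoints is $2\cdot\tfrac12\#\text{vertices}+6\cdot\tfrac23\#\text{vertices}=5\,\#\text{vertices}$. Thus the \emph{average} added degree per honeycomb vertex is exactly $5$, so some vertex must gain at least $5$ extra edges and land at degree $\ge 9$. No ``orbit-based pattern'' of diagonals can evade an average. Leaving some faces untriangulated does not help either: any untriangulated face leaves a hole in the slab complex $X_i(u)$, destroying simple connectivity and hence the bramble verification you rely on.

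The paper resolves this by \emph{not} triangulating on the bare honeycomb vertices. Instead, it subdivides each boundary edge of every face into $11$ segments and fills each square with a fixed plane graph $\Jfour$ and each hexagon with a fixed plane graph $\Jsix$ whose internal faces are all triangles. This distributes the triangulation load over many new vertices. The degree bookkeeping (\cref{l:DeltaGinftyEq7}) then shows corners have degree $6$, edge vertices degree $7$, and face vertices degree at most $7$. The topological argument you sketch for the slabs $X^a_i$ then goes through essentially as you envisioned, because the subdivided faces are still disks with triangular cells. So your overall plan is right, but it requires the face-subdivision idea, which is the substantive new content of this theorem and cannot be replaced by a clever choice of diagonals on the original $1$-skeleton.
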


The construction for \cref{DeltaLe7} is based on a tessellation of $\mathbb{R}^3$ with truncated octahedra, first studied by \citet{Fedorov}. Let \defn{\(Q_0\)} denote the convex hull of all points \((x, y, z) \in \mathbb{R}^3\) such that \(\{|x|, |y|, |z|\} = \{0, 1, 2\}\); see \cref{TruncatedOctahedron}.
A \defn{truncated octahedron} is any polyhedron similar to \(Q_0\). At each corner of a truncated octahedron three faces meet: one square and two regular hexagons. Let \defn{\(Q_0 + (x, y, z)\)} be the translation of \(Q_0\) by a vector \((x, y, z)\).

\begin{figure}[!h]
	\includegraphics{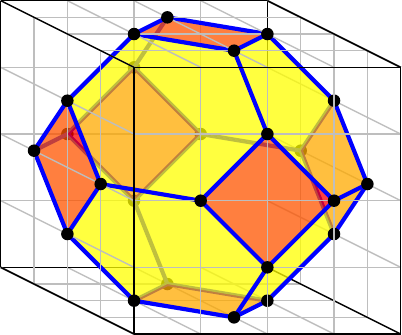}
	\caption{The truncated octahedron \(Q_0\) inscribed in a cube.}
	\label{TruncatedOctahedron}	
\end{figure}

Let \(\mathcal{T}_\infty\) be the family of translations of \(Q_0\) defined as 
\[
\mathcal{T}_\infty = \{Q_0+(4x, 4y, 4z): {x, y, z \in \mathbb{Z}}\} \cup \{Q_0+(4x+2, 4y+2, 4z+2) : {x, y, z \in \mathbb{Z}}\}.
\]
\(\mathcal{T}_\infty\) is a \defn{3-dimensional tessellation}; that is, a family of interior-disjoint polyhedra whose union is \(\mathbb{R}^3\).
A \defn{corner} of \(\mathcal{T}_\infty\) is any corner of a truncated octahedron from \(\mathcal{T}_\infty\); \defn{edges} and \defn{faces} of \(\mathcal{T}_\infty\) are defined similarly. 
Every (hexagonal or square) face of \(\mathcal{T}_\infty\) is shared by two truncated octahedra in \(\mathcal{T}_\infty\).
At each edge of \(\mathcal{T}_\infty\) a square face and two hexagonal faces meet, and at each corner of \(\mathcal{T}_\infty\) two square and four hexagonal faces meet.

We construct an infinite graph \(G_\infty\) whose vertices are points in \(\mathbb{R}^3\) and edges are line segments between their endpoints. \(G_\infty\) is the union of copies of the plane graphs \(\Jfour\) and \(\Jsix\) (depicted in \cref{Triangulations}), where the copies of \(\Jfour\) are placed at the square faces of \(\mathcal{T}_\infty\) and the copies of \(\Jsix\) are placed at the hexagonal faces of \(\mathcal{T}_\infty\). Each copy of \(\Jfour\) or \(\Jsix\) is contained within its corresponding face so that the exterior cycle coincides with the union of the edges contained in that face, black vertices are at the corners of the face, and each edge of the face is split into equal segments by \(10\) vertices from the exterior face.
A vertex of \(G_\infty\) is called a \defn{corner vertex} if it coincides with a corner of \(\mathcal{T}_\infty\), an \defn{edge vertex} if it lies on an edge of \(\mathcal{T}_\infty\) and is not a corner vertex, or a \defn{face vertex} if it is neither a corner vertex nor an edge vertex.
\begin{figure}
	\includegraphics{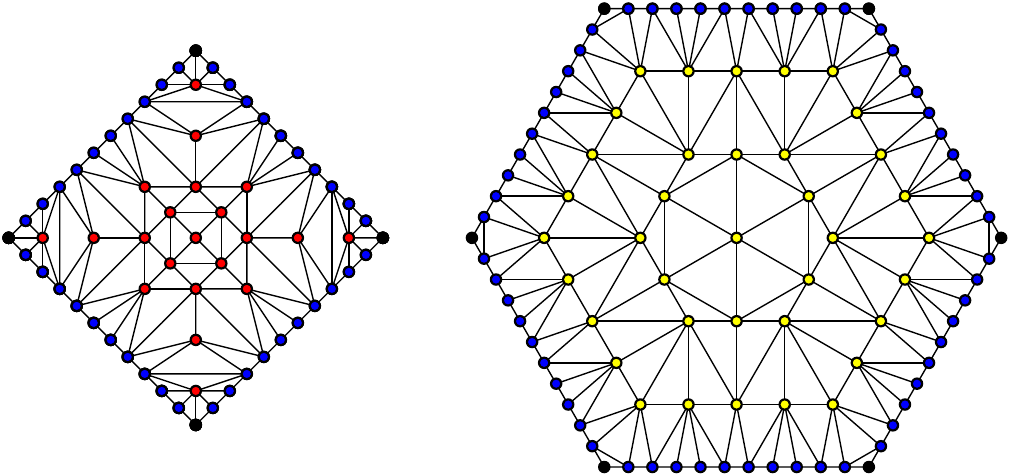}
	\caption{The graphs \(\Jfour\) and \(\Jsix\).}
	\label{Triangulations}
\end{figure}

\begin{lem}\label{l:DeltaGinftyEq7}
	The maximum degree of \(G_\infty\) is \(7\).
\end{lem}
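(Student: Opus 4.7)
Proof proposal. The plan is to split vertices of $G_\infty$ into three types according to where they sit in $\mathcal{T}_\infty$: \emph{face vertices} lie in the interior of a single face; \emph{edge vertices} lie on an edge of $\mathcal{T}_\infty$ but are not corners; and \emph{corner vertices} sit at the corners of $\mathcal{T}_\infty$. For each class I will compute $\deg_{G_\infty}(v)$ by summing the contributions from the face copies incident to $v$, being careful that boundary edges of the plane graphs (those lying along tessellation edges) are shared between adjacent face copies and must therefore be counted only once.

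A face vertex belongs to exactly one copy of $\Jfour$ or $\Jsix$, so its degree in $G_\infty$ equals its degree in that plane graph, which can be read off directly from \cref{Triangulations}. An edge vertex $v$ lies on an edge of $\mathcal{T}_\infty$ at which three faces meet (one square and two hexagonal), and the three corresponding plane graph copies share the same $12$ boundary vertices and $11$ boundary edges along that edge. Hence
\[
\deg_{G_\infty}(v) \;=\; 2 + \bigl(\deg_{\Jfour}(v)-2\bigr) + 2\bigl(\deg_{\Jsix}(v)-2\bigr),
\]
where $\deg_{\Jfour}(v)$ and $\deg_{\Jsix}(v)$ denote the degrees of a non-black exterior vertex in the respective plane graphs. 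A corner vertex $v$ sits where four tessellation edges and six faces (two squares, four hexagons) meet; each incident tessellation edge contributes one shared boundary neighbour, so
\[
\deg_{G_\infty}(v) \;=\; 4 + 2\bigl(\deg_{\Jfour}(v)-2\bigr) + 4\bigl(\deg_{\Jsix}(v)-2\bigr),
\]
where the degrees are now taken at a black corner vertex.

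To finish, I will read the four relevant plane-graph degrees (interior and non-black exterior in $\Jfour$ and $\Jsix$, and black exterior in each) off \cref{Triangulations}, and verify that all three displayed expressions are at most $7$. The main obstacle is the sharing bookkeeping: each tessellation edge belongs to three face copies and each tessellation corner to six, so plane-graph edges that happen to lie on a tessellation edge must be counted exactly once in $G_\infty$, and the contributions from different copies at the same boundary vertex must be disentangled. Once this accounting is set up correctly, the bound $\Delta(G_\infty) \leq 7$ reduces to a routine inspection of the two plane graphs in the figure.
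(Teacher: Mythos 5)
Your case split (face / edge / corner) and your accounting for shared boundary neighbours are essentially identical to the paper's proof; your edge-vertex formula
$2+(\deg_{\Jfour}(v)-2)+2(\deg_{\Jsix}(v)-2)$ is just an algebraic repackaging of the paper's ``add all three face-degrees and subtract $4$ for the two triply-counted neighbours'' ($d_{\Diamond}+2d_{\hexagon}-4$), and likewise for your corner formula. One imprecision worth flagging: you refer to ``the degree of a non-black exterior vertex'' in $\Jfour$ and $\Jsix$ as though these were single numbers, and describe the final step as reading off ``four relevant degrees.'' In fact the ten non-corner exterior vertices along a side of $\Jfour$ have degrees $3,3,5,5,3,3,5,5,3,3$ and those along a side of $\Jsix$ have degrees $4,4,3,3,4,4,3,3,4,4$; the verification hinges on the fact that these two sequences are designed to complement, so that $d_{\Diamond}+2d_{\hexagon}=3+2\cdot4=5+2\cdot3=11$ at every position, giving $7$ uniformly. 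Your framework handles this correctly once plugged in, but the check is a per-position matching of two varying sequences rather than a lookup of a single pair of constants.
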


\begin{proof}
	Let \(v\in V(G_\infty)\). We proceed by case analysis. If \(v\) is a face vertex, then its degree is at most \(7\) because \(\Jfour\) and \(\Jsix\) have maximum degree \(7\).
	
	Now suppose \(v\) is a corner vertex. Since \(v\) belongs to four edges of \(\mathcal{T}_\infty\), it is adjacent to four edge vertices in \(G_{\infty}\). Furthermore, $v$ is adjacent to a face vertex in each of the two copies of \(\Jfour\) containing \(v\) and is not adjacent to any face vertex in a copy of \(\Jsix\). Therefore, \(v\) has degree \(6\). 
	
	It remains to consider the case when $v$ is an edge vertex. Let \(\varepsilon\) be the edge of the tessellation \(\mathcal{T}_\infty\) that contains \(v\). 
	Let \(v_0 \cdots v_{11}\) be the path induced by the vertices of \(G_\infty\) which lie on \(\varepsilon\),
	so that \(v_0\) and \(v_{11}\) are corner vertices and \(v_1\), \ldots, \(v_{10}\) are edge vertices (and one of them is \(v\)).
	The vertices \(v_1\), \ldots, \(v_{10}\) and all their neighbours lie in one copy of \(\Jfour\) and two copies of \(\Jsix\).
	The degrees of the vertices \(v_1\), \ldots, \(v_{10}\) in the copy of \(\Jfour\) containing them are
	\(3\), \(3\), \(5\), \(5\), \(3\), \(3\), \(5\), \(5\), \(3\), \(3\), respectively,
	and their degrees in each copy of \(\Jsix\) containing them are
	\(4\), \(4\), \(3\), \(3\), \(4\), \(4\), \(3\), \(3\), \(4\), \(4\), respectively.
	Since \(3 + 2\cdot4=5+2\cdot3=11\),
	for each \(i \in \{1, \ldots, 10\}\), 
	the total sum of degrees of \(v_i\) in the copy of \(\Jfour\) and the two copies of \(\Jsix\) is \(11\). However, we counted the vertices \(v_{i-1}\) and \(v_{i+1}\) thrice, so the degree of \(v_i\) in \(G_\infty\) is \(11 - 4 = 7\), as required.
\end{proof}

Let \defn{\([a, b]\)} denote the closed interval \(\{x \in \mathbb{R} : a \le x \le b\}\).	Observe that for every \((i, j, k) \in \mathbb{Z}^3\), the cube
 \( [4i-2,4i+2]\times [4j-2,4j+2] \times [4k-2,4k+2]\) contains exactly \(|V(G_\infty) \cap Q_0|\) vertices of \(V(G_\infty)\). For every \(n\in \mathbb{N}\), let \(\mathcal{F}_n\) be the set of all faces of \(\mathcal{T}_\infty\) contained in \([4, 4n+2]^3\) and let \(G_n\) be the subgraph of \(G_\infty\) induced by the vertices lying on the faces in \(\mathcal{F}_n\). Then \(|V(G_n)| \in \Theta(n^3)\). Furthermore, \((G_n)_{n \in \mathbb{N}}\) has cubic growth as the distance between any pair of adjacent vertices in \(G_n\) is \(O(1)\).
By \cref{c:GrowthQueue} and \cref{l:DeltaGinftyEq7}, $(G_n)_{n\in \mathbb{N}}$ has bounded queue-number and maximum degree \(7\). Thus \cref{DeltaLe7} follows from the next lemma.

\begin{lem}\label{SnGn}
	\(\sn(G_n) \in \Omega(n^{1/3})\).
\end{lem}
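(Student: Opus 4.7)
My plan is to follow the template of \cref{t:TreeTriangulation}: use \cref{l:SNvsOverlap} to reduce the lemma to a lower bound $\overlap(\Tr(G_n),\bb{R}^2)\in\Omega(n)$, and then use \cref{l:bramble} to construct a suitable bramble. By \cref{l:DeltaGinftyEq7}, every vertex of $G_n$ is in at most $\binom{7}{2}=21$ triangles, so the reduction via \cref{l:SNvsOverlap} is immediate, and it remains to build a bramble $\mc{B}$ over $\Tr(G_n)$ with $|\mc{B}|\in\Omega(n^3)$ and $\congestion(\mc{B})\in O(n^2)$.

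To construct this bramble, I would mimic the proof of \cref{l:PPPOverlap2} by introducing three families of pairwise-disjoint $2$-dimensional slice subcomplexes of $\Tr(G_n)$, one per coordinate axis. For $\ell\in\{1,2,3\}$ and $c\in\{1,\ldots,n-1\}$, let $U_\ell(c)$ be the union of all truncated octahedra $Q\in\mc{T}_\infty$ whose centre has $\ell$-th coordinate at most $4c+2$, and let $X_\ell(c)$ be the subcomplex of $\Tr(G_n)$ consisting of the $2$-faces of $\mc{T}_\infty$ lying in $\partial U_\ell(c)\cap[4,4n+2]^3$ (with their triangulations by $\Jfour$ and $\Jsix$). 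For each $(c_1,c_2,c_3)\in\{1,\ldots,n-1\}^3$, set $B(c_1,c_2,c_3)=X_1(c_1)\cup X_2(c_2)\cup X_3(c_3)$; the bramble is $\mc{B}=\{B(c_1,c_2,c_3)\}$, so $|\mc{B}|=(n-1)^3\in\Omega(n^3)$.

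The verification that $\mc{B}$ is a bramble would apply \cref{c:bramble} twice, in parallel with \cref{l:PPPOverlap2}. Each $X_\ell(c)$ is a topological $2$-disk separating $[4,4n+2]^3$ into two halves in the $\ell$-th direction, hence simply connected; and since two adjacent octahedra of $\mc{T}_\infty$ have centres differing by at most $4$ in any single coordinate, two distinct slices in the same family share no $2$-face. The pairwise intersection $X_\ell(c)\cap X_{\ell'}(c')$ with $\ell\neq\ell'$ consists of those hexagonal faces of $\mc{T}_\infty$ whose two incident octahedra are simultaneously separated by $\partial U_\ell(c)$ and $\partial U_{\ell'}(c')$; these form a connected chain of hexagons, glued along edges of $\mc{T}_\infty$, running in the remaining coordinate direction. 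The triple intersection $X_1(c_1)\cap X_2(c_2)\cap X_3(c_3)$ contains the hexagonal face between the truncated octahedra centred at $(4c_1+2,4c_2+2,4c_3+2)$ and $(4c_1+4,4c_2+4,4c_3+4)$, and is therefore non-empty. So \cref{c:bramble} implies that each $B(c_1,c_2,c_3)$ is simply connected, and the remaining two bramble conditions for $\mc{B}$ follow from the same case analysis as in \cref{l:PPPOverlap2}, using disjointness of same-family slices (so that intra-family terms drop out of pairwise intersections) and the non-empty triple intersections (to link together the inter-family terms). Finally, every point of $\Tr(G_n)$ belongs to at most one slice per family, so $\congestion(\mc{B})\le 3(n-1)^2\in O(n^2)$; \cref{l:bramble} then yields $\overlap(\Tr(G_n),\bb{R}^2)\in\Omega(n)$, and \cref{l:SNvsOverlap} gives $\sn(G_n)\in\Omega(n^{1/3})$.

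The main obstacle I anticipate is the combinatorial--topological bookkeeping needed to justify the stated properties of the slices and their intersections: confirming that $\partial U_\ell(c)\cap[4,4n+2]^3$ really is a simply connected $2$-subcomplex (a local check is required at each edge of $\mc{T}_\infty$, where three faces meet, and at each corner, where six faces meet, to see that $\partial U_\ell(c)$ is locally a $2$-manifold or a topological disk), and confirming that the shared hexagonal faces described above really do chain into a single connected subcomplex rather than breaking into multiple components once every edge and vertex shared between the two surfaces is taken into account. Once these local topological facts are in hand, the bramble argument runs in parallel to the one in \cref{l:PPPOverlap2}.
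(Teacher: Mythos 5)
Your proof follows essentially the same approach as the paper: reduce to a lower bound on $\overlap(\Tr(G_n),\mathbb{R}^2)$ via \cref{l:SNvsOverlap}, then construct a bramble from three families of axis-aligned slice subcomplexes of the truncated-octahedron tessellation and apply \cref{l:bramble} together with \cref{c:bramble}, with the same $|\mc{B}|\in\Omega(n^3)$ and $\congestion(\mc{B})\in O(n^2)$ bookkeeping. The only difference is in the definition of the slices: the paper's $X^a_i$ is the union of faces of $\mathcal{F}_n$ contained in the slab $4i\le x_a\le 4i+2$ (so squares at $x_a\in\{4i,4i+2\}$ and hexagons centred at $x_a=4i+1$), whereas your $X_\ell(c)=\partial U_\ell(c)\cap[4,4n+2]^3$ consists of squares at $x_\ell\in\{4c+2,4c+4\}$ and hexagons centred at $x_\ell=4c+3$; these are genuinely different surfaces (no integer shift of $c$ matches $i$), but both are valid separating disks with the same intersection pattern, and the local $2$-manifold and connectedness checks you flag as ``bookkeeping'' are precisely what the paper verifies for its own slices.
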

\begin{proof}
\(\Jfour\) and \(\Jsix\) are plane graphs in which every internal face is a triangle. Therefore, every subgraph induced by a face of \(\mathcal{T}_\infty\) has a triangle complex homeomorphic to that face (and to a closed disk). Furthermore, since faces intersect only at their edges and corners, every subgraph of \(G_n\) induced by a union of faces from \(\mc{F}_n\) has a triangle complex homeomorphic to that union.
	
	For \(a \in \{1, 2, 3\}\) and \(i \in \{1,\dots,n\}\), let \(X^a_i\) be the union of all faces \(F \in \mathcal{F}_n\) such that \(4i \le x_a \le 4i + 2\) for all \((x_1, x_2, x_3) \in F\). Each set \(X^a_i\) is homeomorphic to a closed disk (see \cref{tessellationLayer}). Let \(L^a_i\) denote the subgraph of \(G_n\) induced by \(X^a_i\).
	As observed earlier, \(X^a_i\) as a union of faces from \(\mc{F}_n\) is homeomorphic to \(\Tr(L^a_i)\).
	Hence we identify \(X^a_i\) with \(\Tr(L^a_i)\). 
	\begin{figure}[h!]	
		\includegraphics{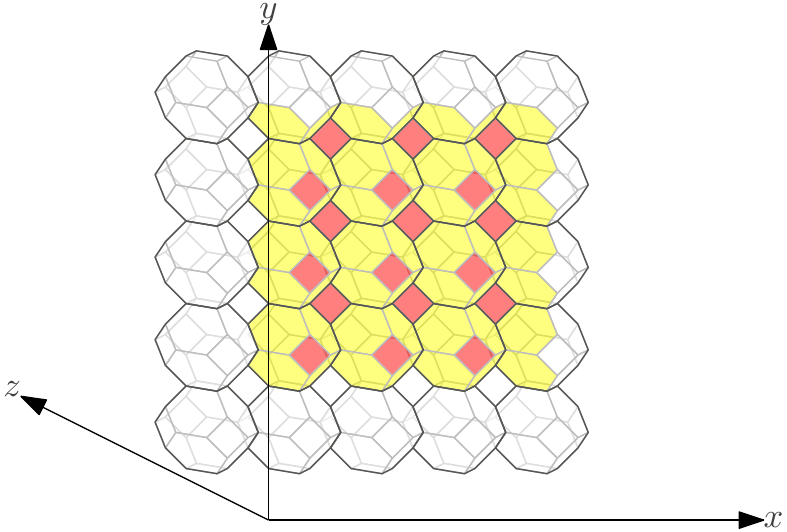}
		\caption{The set \(X^a_i\) for \(n = 4\), \(a=3\) and \(i=2\). The truncated octahedra are of the form \(Q_0 + (4i+2, 4j+2, 4k-2)\) for \((j, k) \in \{0, \ldots, n\}^2\).}
		\label{tessellationLayer}	
	\end{figure}
	
	Observe that there are \((2n-1)^3\) hexagonal faces in \(\mathcal{F}_n\), each of which is contained in a different cube of the form \([2x+2, 2x+4] \times [2y+2, 2y+4] \times [2z+2, 2z+4]\) with \((x, y, z) \in \{1,\dots,2n-1\}^3\).
	Furthermore, each square face has its centre in a corner of one of these cubes (but not every corner is a centre of a square face). Hence, for \((i, j) \in \{1,\dots,n\}^2\), the intersection \(X^1_i \cap X^2_j\) is the union of the \(2n-1\) hexagonal faces from \(\mathcal{F}_n\) contained in \([4i, 4i+2] \times [4j, 4j+2] \times \mathbb{R}\). These hexagons form a sequence such that any pair of consecutive hexagons share an edge. The intersection \(X^1_i \cap X^2_j\) is thus connected. Symmetric arguments show that
	\begin{equation}\label{e:isection2}
	\textrm{\(X^a_i \cap X^b_j\) is connected for \((i, j) \in \{1,\dots,n\}^2\) and \((a, b) \in \{1, 2, 3\}^2\) such that \(a \neq b\)}.
	\end{equation}
	Furthermore, for \((i, j, k) \in \{1,\dots,n\}^3\), the intersection \(X^1_i \cap X^2_j \cap X^3_k\) is the hexagonal face contained in \([4i, 4i+2] \times [4j, 4j+2] \times [4k, 4k+2]\), so in particular
	\begin{equation}\label{e:isection3}
	\textrm{\(X^1_i \cap X^2_j \cap X^3_k\) is connected (and non-empty) for \((i, j, k) \in \{1,\dots,n\}^3\)}.
	\end{equation}
	
	Let \(\mc{B} = \{B(i, j, k)\}_{(i, j, k) \in \{1,\dots,n\}^3}\) where \(B(i, j, k) = X^1_i \cup X^2_j \cup X^3_k\).
	We claim that \(\mc{B}\) is a bramble over \(\Tr(G_n)\). We proceed by verifying the preconditions of \cref{c:bramble}.
	
	Let \((i, j, k) \in \{1,\dots,n\}^3\). We first show that \(B(i, j, k)\) is simply connected. We have \(B(i, j, k) = X^1_i \cup X^2_j \cup X^3_k\), and each of \(X^1_i\), \(X^2_j\) and \(X^3_k\) is homeomorphic to a closed disk and is thus simply connected.
	By \cref{e:isection2} and \cref{e:isection3}, the sets \(X^1_i \cap X^2_j\) and \(X^1_i \cap X^2_j \cap X^3_k\) are connected. Hence, by \cref{l:vanKampen}, \(B(i, j, k)\) is simply connected.
	
	Now, let \((i_1, i_2, i_3), (j_1, j_2, j_3) \in \{1,\dots,n\}^3\), and let \(B_1 = B(i_1, i_2, i_3)\) and \(B_2 = B(j_1, j_2, j_3)\). %\michal{This looks unusual, but allows avoiding swapping, while being able to make the argument}.
	We now show that \(B_1 \cap B_2\) is connected. We have
	\begin{align*}
	B_1 \cap B_2 = (X^1_{i_1} \cup X^2_{i_2} \cup X^3_{i_3}) \cap (X^1_{j_1} \cup X^2_{j_2} \cup X^3_{j_3})
	= \bigcup_{a=1}^3 \bigcup_{b = 1}^3 \left(X^{a}_{i_{a}} \cap X^{b}_{j_b}\right).
	\end{align*}
	
	By \cref{e:isection2}, \(X^a_{i_a} \cap X^b_{j_b}\) is connected when \(a \neq b\).
	Furthermore, if \(\{a, b, c\} = \{1, 2, 3\}\), then, by~\cref{e:isection3}, \(X^a_{i_a} \cap X^b_{j_b}\) intersects \(X^{a}_{i_{a}} \cap X^{c}_{j_{c}}\) and \(X^c_{i_c} \cap X^b_{j_b}\). Hence, the union of all \(X^a_{i_a} \cap X^{b}_{j_b}\) with \(a \neq b\) is connected.
	Furthermore, for each \(a \in \{1, 2, 3\}\), the intersection \(X^a_{i_a} \cap X^a_{j_a}\) is \(X^a_{i_a}\) if \(i_a = j_a\), or an empty set if \(i_a \neq j_a\).
	In the former case, \(X^a_{i_a} \cap X^a_{j_a}\) intersects \(X^a_{i_a} \cap X^b_{j_b}\) for \(b \in \{1, 2, 3\} \setminus \{a\}\).
	Hence, \(B_1 \cap B_2\) is indeed connected.
	
	Finally, for any \((i_1, j_1, k_1), (i_2, j_2, k_2), (i_3, j_3, k_3) \in \{1,\dots,n\}^3\), the intersection \(B(i_1, j_1, k_1) \cap B(i_2, j_2, k_2) \cap B(i_3, j_3, k_3)\) contains \(X^1_{i_1} \cap X^2_{j_2} \cap X^3_{k_3}\) which is non-empty by \cref{e:isection3}. Hence, by \cref{c:bramble}, \(\mathcal{B}\) is a bramble.
	
	By definition, \(|\mathcal{B}| = n^3\). Moreover, for every \(p \in \Tr(G_n)\) and every \(a \in \{1, 2, 3\}\) there exists at most one \(i \in \{1,\dots,n\}\) such that \(p \in \Tr(L^a_i)\). Thus \(\congestion(\mc{B}) \le 3n^2\). By \cref{l:bramble}, 
	\[
	\overlap(\Tr(G_n), \mathbb{R}^2) \ge \beta \frac{|\mc{B}|}{\congestion(\mathcal{B})} = \beta \frac{n^3}{3n^2} = \frac{\beta}{3}n.
	\]
	Since the maximum degree of \(G_n\) is \(7\), each vertex is contained in at most \(\binom{7}{2} = 21\) triangles. Therefore, by \cref{l:SNvsOverlap}, 
	\[
	\sn(G_n) \ge \left(\frac{\overlap(\Tr(G_n), \mathbb{R}^2)}{63}\right)^{1/3} \ge \left(\frac{\beta}{189}n\right)^{1/3}.\qedhere
	\]
\end{proof}

We now prove the lower bound for $\Delta_0$ (defined at the start of the section).

\begin{thm}\label{DeltaGt5}
	Every graph class with maximum degree \(5\) and bounded queue-number has bounded stack-number. 
\end{thm}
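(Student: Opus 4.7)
The plan is to combine a clustered colouring theorem for bounded-degree graphs with the shallow-minor stability of queue-number, reducing the problem to bounding the stack-number of a triangle-free ``blob'' graph.

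Given a graph $G$ with $\Delta(G) \le 5$ and $\qn(G) \le q$, the first step is to invoke a clustered colouring theorem for graphs of bounded maximum degree (such as a result of Haxell, Szab\'o and Tardos) to obtain universal constants $s, c \in \mathbb{N}$ and a partition $V(G) = V_1 \cup \cdots \cup V_s$ such that every component of each $G[V_i]$ has at most $c$ vertices. The defining feature of the regime $\Delta \le 5$ is the existence of such a clustered colouring with $s$ and $c$ depending only on $5$; this is consistent with \cref{DeltaLe7}, whose construction suggests that an analogous clustered colouring with universal parameters $(s,c)$ must fail for $\Delta \in \{6, 7\}$.

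Next, form the \emph{blob graph} $H$ by contracting each monochromatic component to a single vertex (deleting loops and merging parallel edges). Since every blob has radius at most $c$, the graph $H$ is a $c$-shallow minor of $G$, so \cref{ShallowMinorQueue} gives $\qn(H)$ bounded by a function of $q$ and $c$. Moreover $\Delta(H) \le 5c$, and $H$ is $s$-partite (and in particular bipartite, hence triangle-free, when $s=2$). Any stack layout of $H$ with $k$ stacks lifts to a stack layout of $G$ with $O(c^2 k + c^2)$ stacks: process blobs in the order prescribed by $H$'s layout, order vertices within each blob arbitrarily, use $O(c^2)$ dedicated stacks for intra-blob edges (these are mutually non-crossing across blobs since distinct blobs occupy disjoint intervals), and refine each stack of $H$ into $c^2$ sub-stacks indexed by the positions of the endpoints within their respective blobs, so that the parallel $G$-edges merged into a single $H$-edge by contraction are pairwise separated.

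The main obstacle is thus to bound $\sn(H)$. Here the critical advantage is that $H$ is triangle-free (when $s=2$), so the topological lower bound of \cref{l:SNvsOverlap} is vacuous---unlike the truncated-octahedron construction of \cref{DeltaLe7}, whose large stack-number crucially relies on the abundance of triangles in $\Jfour$ and $\Jsix$. A bounded stack layout of $H$ can then be constructed by exploiting the combination of triangle-freeness, bounded maximum degree, and bounded queue-number, plausibly via a product-structure theorem (embedding $H$ into $K_t \boxtimes P_n$ for bounded $t$) together with a careful direct argument for bipartite subgraphs of such products. Carrying out this last step cleanly is where the bulk of the technical work lies and is where the choice $\Delta \le 5$ (as opposed to $6$ or $7$) is indispensable, since it is precisely what furnishes the clustered colouring with the universal constants we depend on.
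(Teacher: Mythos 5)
Your reduction---invoke the Haxell--Szab\'o--Tardos clustered $2$-colouring for maximum degree $5$, contract each monochromatic blob to form a bipartite graph $H$, control $\qn(H)$ via the shallow-minor stability of queue-number (\cref{ShallowMinorQueue}), and then lift a stack layout of $H$ back to $G$ by blowing up each blob---is exactly the skeleton of the paper's argument (via \cref{ClusteredQnSn}, with the lift formalised through $G \subseteq H \boxtimes K_\ell$ and \cref{StackCompleteProduct}). You also correctly identified why $\Delta \le 5$ is the threshold: it is precisely where clustered chromatic number $2$ holds, and this is what makes the contraction bipartite.

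However, the pivotal step is left unresolved. You observe that $H$ is bipartite and bounded-degree and speculate that $\sn(H)$ can be bounded ``plausibly via a product-structure theorem'' plus a ``careful direct argument for bipartite subgraphs,'' but you offer no such argument, and the observation that \cref{l:SNvsOverlap} is vacuous on triangle-free graphs only removes one obstruction---it does not produce an upper bound. The paper closes this gap with a single known result (\cref{BipartiteSnQn}, due to Pemmaraju and to Dujmovi\'c--P\'or--Wood): for every \emph{bipartite} graph $H$, $\sn(H) \le 2\qn(H)$. That one lemma is the reason bipartiteness---rather than triangle-freeness per se---is what you want from the contraction, and without it or a proved substitute, the chain of implications is not complete. (Your informal lifting of the stack layout from $H$ to $G$ is essentially sound, but the paper's route through $H \boxtimes K_\ell$ and \cref{StackCompleteProduct} is cleaner and avoids having to re-argue why the $c^2$-fold refinement of each stack of $H$ separates crossings; I would recommend adopting that framing as well.)
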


The proof of \cref{DeltaGt5} depends on the following definitions. For $k,c\in\mathbb{N}$, a graph $G$ is \defn{$k$-colourable with clustering $c$} if each vertex of $G$ can be assigned one of $k$ colours such that each monochromatic component has at most $c$ vertices. Here a \defn{monochromatic component} is a maximal monochromatic connected subgraph. The \defn{clustered chromatic number} of a graph class $\mathcal{G}$ is the minimum $k\in\mathbb{N}$ such that for some $c\in\mathbb{N}$ every graph in $\mathcal{G}$ is $k$-colourable with clustering $c$. See \citep{Wood18} for a survey on clustered graph colouring. \citet{HST03} proved that the class of graphs with maximum degree at most 5 has clustered chromatic number 2 (which is best possible, since $(P_n\boxslash P_n)_{n \in \mathbb{N}}$ has maximum degree \(6\) and clustered chromatic number 3 by the Hex Lemma). Thus, \cref{DeltaGt5} is an immediate consequence of the following result. 

\begin{thm}
\label{ClusteredQnSn}
Every graph class $\mathcal{G}$ with bounded queue-number and clustered chromatic number at most 2 has bounded stack-number. 
\end{thm}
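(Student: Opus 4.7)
The plan is to reduce to bipartite graphs via a clump-graph contraction, bound their stack-number using the bounded queue-number, and then inflate back to $G$. Let $G$ have $\qn(G) \le q$ and a 2-colouring $V(G) = V_1 \cup V_2$ in which each monochromatic component has size at most $c$. First, contract each monochromatic component to a single vertex, producing a bipartite clump graph $H$. Since each clump has at most $c$ vertices, $H$ is a $c$-small minor of $G$, and in particular a $c$-shallow minor, so \cref{ShallowMinorQueue} gives $\qn(H) \le 2c(2q)^{2c}$, a constant depending only on $q$ and $c$.

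The technical heart of the argument is to establish that bipartite graphs with bounded queue-number have bounded stack-number, i.e., $\sn(H) \le f(\qn(H))$ for some function $f$ and every bipartite $H$. My proposed route goes through track-layouts: for every graph $G'$ one has $\sn(G') \le \binom{\brm{tn}(G')}{2}$, because concatenating the tracks of a $t$-track layout yields a vertex ordering in which edges between any fixed pair of tracks are non-crossing (the forbidden ``X-pattern'' of a track layout being precisely such a bipartite crossing). It then suffices to bound $\brm{tn}(H)$ by a function of $\qn(H)$ when $H$ is bipartite, which one achieves by refining each of the two parts of the bipartition into a bounded number of linearly-ordered sublevels compatible with a given queue layout, in the spirit of the Dujmović--Pór--Wood track-number bounds that combine queue-number with a bounded-chromatic-number hypothesis.

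The third step is to inflate a stack-layout of $H$ into one of $G$. Order $V(G)$ so that each clump appears as a contiguous block, with the blocks arranged in the order of the stack-layout of $H$. Within-clump edges span windows of at most $c$ consecutive positions and can be packed into $O(c)$ stacks, shared across clumps since blocks occupy pairwise disjoint intervals. For the between-clump edges associated to a single $H$-edge $uv$, the at-most-$c^2$ edges between the two blocks admit a $c$-stack layout, since the stack-number of $K_{c,c}$ in bipartite block order equals the largest chain in the product order $[c]\times[c]$, which is $c$, by Mirsky's theorem. Crucially, two $H$-edges in a common $H$-stack are non-crossing, so the corresponding clump blocks are nested or disjoint and the inflated $G$-edges from different such $H$-edges do not cross; the same $c$ stacks may be re-used within one $H$-stack. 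Summing, $\sn(G) \le c \cdot \sn(H) + O(c)$, which is bounded.

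The main obstacle is the second step: bipartiteness must be used essentially in converting bounded queue-number to bounded stack-number, because the unconditional implication fails by \cref{t:StrongProductPaths}. Naive orderings of a bipartite graph (for example, placing $V_1$ first and $V_2$ second) can force all edges from a single original queue to pairwise cross in the new order, so a refined track-based argument exploiting both the bipartite structure and the queue assignment is required.
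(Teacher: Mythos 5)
Your high-level plan is the same as the paper's: contract monochromatic components to get a bipartite $\ell$-small minor $H$, apply \cref{ShallowMinorQueue} to bound $\qn(H)$, use bipartiteness to bound $\sn(H)$ in terms of $\qn(H)$, and then inflate back to $G$. The first step is identical. For the second step the paper simply invokes the known result that $\sn(H)\le 2\qn(H)$ for bipartite $H$ (\cref{BipartiteSnQn}); your detour through track layouts is plausible in outline, but you do not actually establish that bipartite graphs of bounded queue-number have bounded track-number — you only gesture at it ``in the spirit of'' Dujmovi\'c--P\'or--Wood, and this claim is essentially equivalent to the cited inequality you are trying to re-derive. It would be cleaner, and is what the paper does, to cite that result directly.

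The real gap is in the third (inflation) step. You assert that if two $H$-edges lie in a common $H$-stack (hence do not cross), then ``the corresponding clump blocks are nested or disjoint and the inflated $G$-edges from different such $H$-edges do not cross,'' so the same $c$ stacks can be re-used across the whole $H$-stack. This fails when the two $H$-edges share an endpoint, which is perfectly allowed within one stack. Concretely, take $H$-edges $uv$ and $uv'$ with $u<v<v'$ in the $H$-ordering, so the blocks appear in the order $B_u, B_v, B_{v'}$; then the inflated edges $(u,1)(v,c)$ and $(u,2)(v',1)$ cross. Blocks here are neither nested nor disjoint (two coincide), and nothing in your $c$-colouring of a single $K_{c,c}$ prevents this pair from receiving the same colour once you re-use colours across the two $H$-edges. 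The paper circumvents exactly this by decomposing each stack (an outerplanar graph, treewidth $\le 2$) into three spanning star forests (\cref{l:twstar}) and colouring each inflated edge by the $K_t$-index of the \emph{centre} of its star; edges sharing a centre then share a vertex after inflation, and edges in different star components have genuinely disjoint block intervals. Without something playing the role of this star-forest decomposition (or an equivalent $\sn(H\boxtimes K_\ell)$ bound such as \cref{StackCompleteProduct}), your inflation argument does not go through, and you would only get a factor of $c$ per $H$-stack in the case of a matching of $H$-edges, not a general stack.
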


The proof of \cref{ClusteredQnSn} depends on the following lemmas.

\begin{lem}[\citep{Pemmaraju92,DPW04}]
	\label{BipartiteSnQn}
	For every bipartite graph $G$, $$\sn(G)\leq 2\qn(G).$$
\end{lem}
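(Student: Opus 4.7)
The plan is to convert any $q$-queue layout of a bipartite graph $G$ with bipartition $V(G)=A\cup B$ into a $2q$-stack layout by a partial reversal of the vertex ordering. Given a $q$-queue layout $((v_1,\dots,v_n),\phi)$ of $G$, I would construct a new ordering $\sigma'$ that lists the vertices of $A$ in their original order, followed by the vertices of $B$ in the \emph{reverse} of their original order. The intuition is that queues forbid nesting while stacks forbid crossing, and reversing one side of the bipartition swaps these two patterns for edges that cross the cut---which is every edge of $G$ since $G$ is bipartite.

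The next step is to split $E(G)$ into $E_A$, the edges whose $\sigma$-earlier endpoint lies in $A$, and $E_B$, the edges whose $\sigma$-earlier endpoint lies in $B$, and to verify that $\phi$ restricted to each of $E_A$ and $E_B$ is a valid stack colouring under $\sigma'$. For two edges $v_iv_j, v_kv_\ell\in E_A$ with $i<j$, $k<\ell$, $i<k$, and the same queue colour, the queue (non-nesting) condition forces $j\le\ell$. Under $\sigma'$ the $A$-endpoints $v_i,v_k$ retain their relative order, while the $B$-endpoints $v_j,v_\ell$ are reversed, so the four endpoints appear in $\sigma'$ in the order $v_i,v_k,v_\ell,v_j$; the two edges are then nested and hence non-crossing. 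A symmetric argument---now with the two $B$-endpoints reversed and the two $A$-endpoints retaining their order---handles $E_B$: the four endpoints appear in $\sigma'$ as $v_j,v_\ell,v_k,v_i$, again a nesting rather than a crossing. Combining the $q$ stacks obtained for $E_A$ with the $q$ stacks obtained for $E_B$ yields a $2q$-stack layout of $G$.

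There is no substantive obstacle here; the only point that requires care is the small case analysis ensuring that the \emph{same} ordering $\sigma'$ works for both $E_A$ and $E_B$. Once the relative positions of the four endpoints of a pair of candidate conflicting edges are worked out in each case, the inequality $\sn(G)\le 2\qn(G)$ follows immediately.
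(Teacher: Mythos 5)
Your proof is correct and is essentially the standard argument from the cited references (Pemmaraju's thesis and Dujmovi\'c--P\'or--Wood): take the queue ordering, keep $A$ in place, reverse $B$, and split each queue into two stacks according to whether the $\sigma$-earlier endpoint of an edge lies in $A$ or in $B$. The paper itself does not reprove this lemma, but your reversal argument, including the observation that the non-nesting condition on a queue translates into a non-crossing (in fact nesting) condition after partially reversing the order, is exactly the intended one, and the edge cases you leave implicit (shared endpoints) are harmless since such edge pairs cannot cross.
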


\citet{GH01} proved that every graph of treewidth $k$ has stack-number at most $k+1$. A close inspection of their proof actually shows the following result. A \defn{star forest} is a forest where each component is isomorphic to a star. In a \(k\)-vertex component of a star forest with \(k \neq 2\), there is a unique vertex of degree \(k-1\) called its \defn{centre}. In a \(2\)-vertex component, any vertex may be chosen as the \defn{centre}.

\begin{lem}[\cite{GH01}]\label{l:twstar}
	Every graph $G$ of treewidth $k$ has an edge-partition into
	$k+1$ spanning star forests $G_1,\dots,G_{k+1}$, such that each vertex
	is the centre of a star component of some $G_i$.
\end{lem}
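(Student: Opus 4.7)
The plan is to reduce to the case where $G$ is a $k$-tree and then exploit the perfect elimination ordering. Every treewidth-$k$ graph is a spanning subgraph of some $k$-tree (take any chordal completion of maximum clique size $k+1$), and removing edges from a star forest while keeping the vertex set yields another star forest whose stars are sub-stars of the originals, so it suffices to establish the statement for $k$-trees.

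For a $k$-tree $H$, fix a construction ordering $v_1,\dots,v_n$ so that $\{v_1,\dots,v_{k+1}\}$ is a clique and each $v_i$ with $i>k+1$ has an earlier neighbourhood equal to a $k$-clique. I would then properly colour $V(H)$ with $\{1,\dots,k+1\}$ by assigning distinct colours to the initial clique and, for each $i>k+1$, giving $v_i$ the unique colour not used by its $k$ earlier (cliqued) neighbours. Write $c\colon V(H)\to\{1,\dots,k+1\}$ for this colouring, and for each $j$ let $H_j$ be the spanning subgraph whose edges are those $v_av_b$ with $a<b$ and $c(v_a)=j$.

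The key verification is that each $H_j$ is a star forest in which every colour-$j$ vertex is a centre. A colour-$j$ vertex $v_a$ sees in $H_j$ only its later edges: an earlier edge $v_bv_a$ (with $b<a$) in $H_j$ would require $c(v_b)=j=c(v_a)$, contradicting properness. On the other hand, a non-colour-$j$ vertex has at most one incident edge in $H_j$, because its earlier neighbours (a clique of size at most $k$) have pairwise distinct colours, so at most one of them has colour $j$. Hence every component of $H_j$ is a star centred at a (possibly isolated) colour-$j$ vertex, and each vertex $v$ of $H$ is the centre of its star component in $H_{c(v)}$.

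The only small obstacle is confirming that the passage from $H$ back to the spanning subgraph $G$ preserves the centre property: the component of $v$ in $G_{c(v)}:=H_{c(v)}\cap E(G)$ is obtained from $v$'s star component in $H_{c(v)}$ by deleting some of its leaf edges, hence is again a star with $v$ as centre, possibly degenerating to a trivial $K_{1,0}$ or a $K_{1,1}$ whose centre we may designate to be $v$ (using the freedom provided by the paper's definition). The edge-sets $E(G_j):=E(H_j)\cap E(G)$ therefore give the desired edge-partition of $G$ into $k+1$ spanning star forests in which every vertex is a centre.
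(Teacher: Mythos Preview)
The paper does not give its own proof of this lemma; it is quoted from \cite{GH01} with the remark that ``a close inspection of their proof'' yields the star-forest formulation. Your argument is correct and is exactly the Ganley--Heath argument: reduce to a $k$-tree, take the perfect elimination ordering, greedily $(k+1)$-colour, and put each edge into the star forest indexed by the colour of its earlier endpoint. The verification that each $H_j$ is a star forest (earlier neighbours form a clique, hence at most one has colour $j$) and the passage back to the spanning subgraph $G$ are both handled cleanly; the only cosmetic point is that a chordal completion of clique size $k+1$ need not itself be a $k$-tree, but it is always a spanning subgraph of one, which is what you actually use.
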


\begin{lem}
	\label{StackCompleteProduct}
	For every graph $G$ and $t\in\mathbb{N}$, 
	$$\sn(G\boxtimes K_t) \leq \max\{ 3t\,\sn(G), \lceil\tfrac{t}{2}\rceil\}.$$
\end{lem}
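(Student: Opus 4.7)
Plan: When $\sn(G)=0$, the graph $G$ has no edges and so $G \boxtimes K_t$ is a disjoint union of copies of $K_t$; thus $\sn(G \boxtimes K_t)=\sn(K_t)=\lceil t/2 \rceil$, matching the bound. Assume henceforth that $s:=\sn(G)\ge 1$. Fix an $s$-stack layout $((v_1,\ldots,v_n),\phi)$ of $G$ with $\phi\colon E(G)\to[s]$, and order the vertices of $G \boxtimes K_t$ in consecutive \emph{blocks} $B_a:=((v_a,1),\dots,(v_a,t))$ for $a=1,\ldots,n$. Edges of $G \boxtimes K_t$ are either \emph{internal}, lying in a single block $B_a$ (the edges of a $K_t$ on $B_a$), or \emph{external}, joining $(v_a,i)\in B_a$ and $(v_b,j)\in B_b$ for some $v_av_b\in E(G)$ with $a<b$ (the edges of a $K_{t,t}$ between $B_a$ and $B_b$).

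The plan is to build a stack layout using $3ts$ stacks, partitioned into $s$ packages of $3t$ stacks each: package $k\in[s]$ will absorb all external blowup edges coming from $E_k:=\phi^{-1}(k)$, and the internal edges will all fit inside package $1$ (feasible because $\lceil t/2\rceil\le 3t$). Since $\max\{3ts,\lceil t/2\rceil\}=3ts$ when $s\ge 1$, this matches the claimed bound. The core step is the following sublemma: for each $k$, the external blowup edges arising from $E_k$ admit a partition into $3t$ non-crossing stacks in our ordering.

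To prove this sublemma, I would split the $3t$ labels into three groups of $t$: ``diagonal'' labels $D_\lambda$ ($\lambda\in[t]$) receiving the blowup edges $(u,i)(v,i)$ with $i=j=\lambda$; ``source'' labels $S_\lambda$ receiving off-diagonal blowup edges with source coordinate $i=\lambda$; and ``target'' labels $T_\mu$ receiving off-diagonal edges with target coordinate $j=\mu$. Each $D_\lambda$ is immediately non-crossing, since any two diagonal blowup edges in $D_\lambda$ arising from distinct $G$-edges of $E_k$ are either disjoint, nested, or share a blowup vertex (the last happens precisely when their $G$-edges share an endpoint in $G$). For the off-diagonal blowup edges, I would type each $G$-edge of $E_k$ (or, if needed, each blowup edge individually) as either type $S$ (sending $(u,i)(v,j)$ to $S_i$) or type $T$ (sending it to $T_j$). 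A pairwise conflict analysis reduces same-stack crossings to three prototypical configurations of two $G$-edges in $E_k$ sharing either a source vertex, a target vertex, or a ``middle'' vertex (source of one, target of the other). I plan to produce a valid type assignment by induction, exploiting that $(V(G),E_k)$ is outerplanar and hence $2$-degenerate: at each step I remove a vertex of degree at most two from the current $E_k$-subgraph and extend the assignment.

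The main obstacle will be the combinatorial claim that such a type assignment always exists, especially at vertices of $V(G)$ with both large in- and out-degree in $E_k$, where a uniform per-$G$-edge typing can fail; my plan is to refine to a per-blowup-edge typing in these cases to obtain sufficient flexibility. Once the sublemma is established, the internal edges can be folded into package $1$ by noting that each internal edge inside $B_a$ is nested within every external blowup edge whose arc spans $B_a$ without endpoint in $B_a$, so the only potential conflicts are with external blowup edges of package $1$ having an endpoint in $B_a$; these can be resolved by choosing the $\lceil t/2\rceil$-stack $K_t$-layout on $B_a$ so that its labels are compatible with those of the external blowup edges incident to $B_a$. This completes the layout within $3ts=\max\{3ts,\lceil t/2\rceil\}$ stacks, as required.
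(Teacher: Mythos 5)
Your plan correctly identifies the high-level architecture shared with the paper (order $V(G\boxtimes K_t)$ by blocks following an $s$-stack layout of $G$, and spend $3t$ colours per original stack, giving $3ts=\max\{3ts,\lceil t/2\rceil\}$ when $s\ge 1$), but the load-bearing sublemma is left as an acknowledged gap, and that gap is exactly the hard part. The paper closes it by invoking the Ganley--Heath structural lemma (\cref{l:twstar}): each stack class $G_k=\psi^{-1}(k)$ is a $1$-stack graph, hence outerplanar, hence of treewidth at most $2$, and hence admits an edge-partition into \emph{three} spanning star forests $G_{k,1},G_{k,2},G_{k,3}$ with the extra property that \emph{every} vertex of $G$ is the centre of some star component. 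The label of a blowup edge is then $(k,a,i)$ where $a$ is the index of the star forest containing its underlying $G$-edge and $i$ is the $K_t$-coordinate of the endpoint sitting over the \emph{centre} of that star. This structure simultaneously kills the ``middle vertex'' obstruction you identify (within a single star forest, a vertex cannot be both a leaf of one star and the centre of another, so the bad configuration never shares a label) and handles the internal $K_t$-edges cleanly (for the clique on block $B_u$, use the pair $(k,a)$ for which $u$ is a designated centre, so those edges either share an endpoint with, or are nested inside, the external edges carrying that label).

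Your proposed replacement — a per-edge ``source/target'' typing found by induction on a $2$-degenerate vertex ordering of $G_k$, refined to per-blowup-edge when stuck — is not shown to terminate, and you yourself flag it as the main obstacle. Two concrete worries: (i) a greedy/degeneracy argument can paint itself into a corner precisely at the ``middle vertex'' configuration, because whether $uv$ should be typed by its $u$-side or its $v$-side coordinate depends globally on how the other edges at $u$ and at $v$ were typed, and $2$-degeneracy gives no obvious way to make those choices consistently; the paper's star-forest decomposition is precisely a globally consistent choice of ``which endpoint owns each edge,'' and getting it with only two off-diagonal types ($S$ and $T$) rather than three star forests is a genuinely stronger demand than what GH01 provides. (ii) Your treatment of the internal $K_t$-edges (``choose the $\lceil t/2\rceil$-stack layout of $K_t$ on $B_a$ to be compatible with the incident external labels'') is not an argument; the paper needs the nontrivial ``every vertex is a centre'' clause of \cref{l:twstar} to make this step go through, and nothing in your sketch secures an analogue. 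In short, the proposal is missing the structural decomposition lemma that the paper's proof is built on, and the substitute you sketch does not clearly fill that role.
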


\begin{proof}	
	We may assume that $G$ is connected and $V(K_t) = \{1,\dots,t\}$. If $|V(G)|=1$ then the claim holds since $\sn(G)=0$ and $\sn(G\boxtimes K_t)=\sn(K_t)\leq \lceil \tfrac{t}{2} \rceil$; see \cite{BK79}. 
	Now assume that $|V(G)|\geq 2$ and thus $E(G)\neq \emptyset$. Let $s=\sn(G)\geq 1$. Let $(v_1,\dots,v_n)$ together with $\psi \colon E(G)\to \{1,\dots,s\}$ be an $s$-stack layout of $G$. For each $k \in \{1,\dots,s\}$, let $G_k$ be the spanning subgraph of $G$ with $E(G_k)=\psi^{-1}(k)$. Thus $G_k$ admits a $1$-stack layout and is therefore outerplanar. Since outerplanar graphs have treewidth at most $2$, it follows by \cref{l:twstar} that $G_k$ has an edge-partition into three spanning star forests $G_{k,1},G_{k,2},G_{k,3}$, such that for each vertex $v\in V(G)$, there is some $a\in \{1,2,3\}$ such that $v$ is the centre of a star component of $G_{k,a}$. 
		
	Define $\phi \colon E(G\boxtimes K_t) \to \{1,\dots,s\}\times\{1,2,3\}\times\{1,\dots,t\}$ as follows. 	Consider an edge $e=(u,i)(v,j)$ of $G\boxtimes K_t$. If $u=v$ then let $(k,a)\in \{1,\dots,s\}\times\{1,2,3\}$ be such that $u$ is the centre of a star component of $G_{k,a}$. Let $\phi(e)=(k,a,i)$. Otherwise $uv\in E(G_{k,a})$ for some $k\in\{1,\dots,s\}$ and $a\in\{1,2,3\}$. Since $G_{k,a}$ is a star-forest, without loss of generality, $u$ is the centre of the star component of $G_{k,a}$ containing $uv$. Let $\phi(e)=(k,a,i)$. 
	
	We claim that $\phi$ and the vertex-ordering 
	\[((v_1,1),\dots,(v_1,t);(v_2,1),\dots,(v_2,t);\dots;(v_n,1),\dots,(v_n,t))\] 
	define a $3st$-stack layout of $G \boxtimes K_{t}$ where $\phi^{-1}((k,a,i))$ is a non-crossing star-forest for each $(k,a,i)\in \{1,\dots,s\}\times \{1,2,3\}\times \{1,\dots,t\}$. 
	Consider edges $e$ and $e'$ of $G \boxtimes K_t$ with $\phi(e)=\phi(e')=(k,a,i)$ for some $(k,a,i)\in\{1,\dots,s\}\times \{1,2,3\}\times \{1,\dots,t\}$. 
	By construction, $e=(u,i)(v,j)$ and $e'=(x,i)(y,\ell)$ for some $u,v,x,y\in V(G)$ and $j,\ell\in\{1,\dots,t\}$. 
	If $u\neq v$ then $uv$ is an edge of some component of $G_{k,a}$ centred at $u$. 
	Similarly, if $x\neq y$ then $xy$ is an edge of some component of $G_{k,a}$ centred at $x$. 
	If $u$ and $x$ are in the same component of $G_{k,a}$, then $e$ and $e'$ have a common end-vertex $(u,i)=(x,i)$ and therefore do not cross. 
	Now assume $u$ and $x$ are in distinct components of $G_{k,a}$. 
	If $u \neq v$ and $x\neq y$, then $\psi(uv)=\psi(xy)=k$ and $uv$ and $xy$ do not cross, implying $e$ and $e'$ do not cross by the choice of vertex-ordering. 
	Now assume that $u=v$ or $x=y$. Since $\{u,v\}\cap \{x,y\}= \emptyset$, $e$ and $e'$ do not cross by the choice of vertex-ordering.
\end{proof}

\begin{proof}[Proof of \cref{ClusteredQnSn}]
	By assumption, there exist $c,\ell\in\mathbb{N}$ such that for every graph $G\in\mathcal{G}$, we have $\qn(G)\leq c$ and $G$ is 2-colourable with each monochromatic component having at most $\ell$ vertices. Contracting each monochromatic component to a single vertex gives a bipartite $\ell$-small minor $H$ of $G$. \cref{ShallowMinorQueue} implies $\qn(H) \leq 2\ell(2c)^{2\ell}$. \cref{BipartiteSnQn} implies $\sn(H) \leq 4\ell(2c)^{2\ell}$. By construction, $G$ is isomorphic to a subgraph of $H \boxtimes K_{\ell}$. Thus $\sn(G) \leq \sn(H\boxtimes K_\ell)$, which is at most $12\ell^2(2c)^{2\ell}$ by \cref{StackCompleteProduct}. Hence $\mathcal{G}$ has bounded stack-number. 
\end{proof}

\section{Open Problems}

We finish with some open problems:
\begin{itemize}
\item Does there exist a graph class with bounded stack-number and unbounded queue-number? This is equivalent to the question of whether graphs with stack-number 3 have bounded queue-number~\citep{DujWoo05}.
\item Do graphs with queue-number 2 (or 3) have bounded stack-number~\citep{DEHMW21}?
\item Do graph classes with quadratic (or linear) growth have bounded stack-number?
	\item Does there exist a graph family with unbounded stack-number, bounded queue-number and maximum degree $6$?
	\item The best known lower bound on the maximum stack-number of $n$-vertex graphs with fixed maximum degree $\Delta$ is $\Omega(n^{1/2-1/\Delta})$, proved by \citet{Malitz94a} using a probabilistic argument. Is there a constructive proof of this bound?
	\item The best upper bound on the stack-number of $n$-vertex graphs with fixed maximum degree $\Delta$ is $O(n^{1/2})$, also due to \citet{Malitz94a}. Closing the gap between the lower and upper bounds is an interesting open problem. For example, the best bounds for graphs of maximum degree 3 are $\Omega(n^{1/6})$ and $O(n^{1/2})$. 	
\end{itemize}

\subsection*{Acknowledgements.} This work was initiated at the \emph{Workshop on Graph Product Structure Theory} (BIRS21w5235) at the Banff International Research Station, 21--26 November 2021. Thanks to the other organisers and participants. 

{
\fontsize{10pt}{10pt}
%\bibliography{stack}
%\bibliographystyle{DavidNatbibStyle}
%
% NOTE: "Some new orders of {H}adamard and skew-{H}adamard matrices" needs to be manually put in alphabetical order
%
\def\soft#1{\leavevmode\setbox0=\hbox{h}\dimen7=\ht0\advance \dimen7
	by-1ex\relax\if t#1\relax\rlap{\raise.6\dimen7
		\hbox{\kern.3ex\char'47}}#1\relax\else\if T#1\relax
	\rlap{\raise.5\dimen7\hbox{\kern1.3ex\char'47}}#1\relax \else\if
	d#1\relax\rlap{\raise.5\dimen7\hbox{\kern.9ex \char'47}}#1\relax\else\if
	D#1\relax\rlap{\raise.5\dimen7 \hbox{\kern1.4ex\char'47}}#1\relax\else\if
	l#1\relax \rlap{\raise.5\dimen7\hbox{\kern.4ex\char'47}}#1\relax \else\if
	L#1\relax\rlap{\raise.5\dimen7\hbox{\kern.7ex
			\char'47}}#1\relax\else\message{accent \string\soft \space #1 not
		defined!}#1\relax\fi\fi\fi\fi\fi\fi}

}
\end{document}